\documentclass[a4paper, reqno, 11pt]{amsart}

\usepackage[english]{babel}
\usepackage{amsmath}
\usepackage{amssymb}
\usepackage{amsthm}
\usepackage{mathrsfs}
\usepackage{enumerate}
\usepackage{ifthen}
\usepackage{bbm}
\usepackage{xcolor}
\usepackage{verbatim}
%\provideboolean{shownotes} 
%\setboolean{shownotes}{true}

\usepackage[pdftex,     % sets up hyperref to use pdftex driver
plainpages=false,   % allows page i and 1 to exist in the same document
breaklinks=true,    % link texts can be broken at the end of line
colorlinks=true,
linkcolor=blue,
% anchorcolor [black]
citecolor=green,
% filecolor [cyan]
% menucolor [red]
% runcolor [cyan - same as file color]
% urlcolor [magenta]
% allcolors -- use this if you want to set all links to the same color
pdftitle={Title},
pdfauthor={P. Bonicatto, G. Ciampa and G. Crippa}
]{hyperref}

\usepackage{graphicx}
\usepackage{pgfplots}
\usepackage{tikz} 
\usepackage{tikz-3dplot}
\usetikzlibrary{patterns, patterns.meta}
\usepackage{float}
\usepackage{mathtools}
\usepackage[font=footnotesize,labelfont=bf]{caption}%[2004/07/16]
\usepackage{xfrac}
\usepackage{enumitem}
\usepackage{bm}
\usepackage{bbm}
\renewcommand{\b}{\bm b}
\renewcommand{\a}{\bm a}

\newcommand{\bsigma}{\bm \sigma}

\newcommand{\margnote}[1]{
\ifthenelse{\boolean{shownotes}}%
{\marginpar{\raggedright\tiny\texttt{#1}}}%
{}%
}

\newcommand{\hole}[1]{
\ifthenelse{\boolean{shownotes}}%
{\begin{center} \fbox{ \rule {.25cm}{0cm}
\rule[-.1cm]{0cm}{.4cm} \parbox{.85\textwidth}{\begin{center}
\texttt{#1}\end{center}} \rule {.25cm}{0cm}}\end{center}}
{}
}
%%%%%%%%%% Theorems %%%%%%%%%%%
\newtheorem{thm}{Theorem}[section]
\newtheorem{prop}[thm]{Proposition}
\newtheorem{proposition}[thm]{Proposition}
\newtheorem{lem}[thm]{Lemma}
\newtheorem{cor}[thm]{Corollary}
\newtheorem{rem}[thm]{Remark}

\theoremstyle{definition}
\newtheorem{defn}[thm]{Definition}
\newtheorem{definition}[thm]{Definition}

\newtheorem*{mainthm*}{Selection Theorem}
\newtheorem*{mainthmdue*}{Regularity Theorem}

\DeclareMathOperator{\dive}{\mathrm {div}}

\newcommand{\e}{\varepsilon}		       
\newcommand{\R}{\mathbb{R}}
\newcommand{\T}{\mathbb{T}}

\newcommand{\weakto}{\rightharpoonup}
\newcommand{\weaktos}{\stackrel{*}{\rightharpoonup}}

\newcommand{\de}{\mathrm{d}}
\newcommand*{\paolo}[1]{\textcolor{red}{#1}}
\newcommand*{\gennaro}[1]{\textcolor{blue}{#1}}

%%%%Tabella delle ipotesi
\usepackage{float}   
\usepackage{fancybox}		  
\usepackage{verbatim}

\newcommand{\schema}[1]{{\bf \sc #1}}
\newenvironment{pschema}[1]{\vspace{3mm}
\noindent\begin{Sbox}\begin{minipage}{.95\columnwidth}\vspace{2mm}\begin{center}{\large \schema{#1}}\vspace{5mm}\\
\begin{minipage}{0.9\textwidth}}{\end{minipage}\end{center}\vspace{2mm}\end{minipage}\end{Sbox}\fbox{\TheSbox}\vspace{3mm}}

\numberwithin{equation}{section}

\title[Regularity and uniqueness for Fokker-Planck equations]{A regularity result for the Fokker-Planck equation with non-smooth drift and diffusion}

\author{Paolo Bonicatto}
\address[P.\ Bonicatto]{Dipartimento di Matematica, Universit\`a di Trento, Via Sommarive, 14, 38123 Povo (TN), Italy.}
\email{paolo.bonicatto@unitn.it}

\author[Gennaro Ciampa]{Gennaro Ciampa}
\address[G.\ Ciampa]{DISIM - Dipartimento di Ingegneria e Scienze dell'Informazione e Matematica\\ Universit\`a  degli Studi dell'Aquila \\Via Vetoio \\ 67100 L'Aquila \\ Italy}
\email{gennaro.ciampa@univaq.it}

\author{Gianluca Crippa}
\address[G. Crippa]{Departement Mathematik und Informatik, Universit\"at Basel, Spiegelgasse 1, CH-4051 Basel, Switzerland.}
\email{gianluca.crippa@unibas.ch}

\setlength{\topmargin}{-1cm}
\setlength{\textwidth}{16cm}
\setlength{\textheight}{23cm}
\setlength{\oddsidemargin}{0pt}
\setlength{\evensidemargin}{0pt}

\date{\today}

\begin{document}

\vskip .2truecm

\begin{abstract}
    The goal of this paper is to study weak solutions of the Fokker-Planck equation. We first discuss existence and uniqueness of weak solutions in an irregular context, providing a unified treatment of the available literature along with some extensions. Then, we prove a regularity result for distributional solutions under suitable integrability assumptions, relying on a new, simple commutator estimate in the spirit of DiPerna-Lions' theory of renormalized solutions for the transport equation.
    Our result is somehow transverse to Theorem 4.3 of \cite{F08}: on the diffusion matrix we relax the assumption of Lipschitz regularity in time at the price of assuming Sobolev regularity in space, and we prove the regularity (and hence the uniqueness) of distributional solutions to the Fokker-Planck equation.
    
	\footnotesize{
		\vskip .3truecm
		\noindent Keywords: Fokker-Planck equation, non-smooth diffusion, irregular vector field, regularity and uniqueness. 
		\vskip.1truecm
		\noindent 2020 Mathematics Subject Classification: 35A02, 35B65, 35K57, 35Q35, 35Q84}
\end{abstract}

\maketitle
\tableofcontents

\section{Introduction}
This paper deals with regularity and uniqueness issues for weak solutions to the Fokker-Planck equation
\begin{equation}\label{eq:fp}\tag{FP}
\begin{cases}
\partial_t u + \dive(\b u) -\frac{1}{2} \sum_{i,j}\partial_{ij}(a_{ij}u)=0 &  \text{ in } (0,T) \times \R^d, \\
u\vert_{t=0}=u_0 & \text{ in } \R^d,
\end{cases}
\end{equation}
where $u:(0,T)\times\R^d\to\R$ is the unknown, $\b:(0,T)\times\R^d\to\R^d$ is a given vector field, $\a=(a_{ij})_{i,j=1,...,d}:(0,T)\times\R^d\to\R^{d\times d}$ is a given diffusion matrix and $u_0:\R^d\to\R$ is the initial datum. We are interested here in positive definite fixed diffusivity and not in behaviors for small (or vanishing) diffusivities and we refer to \cite{BCC, BN, CR2, DEIJ, LBL_CPDE, LiLuo, S21} for some recent results with degenerate viscosity coefficient. The equation \eqref{eq:fp} arises frequently in fluid-dynamics models where $u$ is a passive scalar which is simultaneously advected (by the given velocity field $\b$) and diffused by the second order operator associated with the matrix $\a$, see \cite{O, OP}. 
The Fokker-Planck equation is also intimately connected to the stochastic differential equation
\begin{equation}\label{eq:sde}\tag{SDE}
\begin{cases}
    \de X_{t,s}(x)=\b(s,X_{t,s}(x))\,\de s+\bsigma(s,X_{t,s}(x))\,\de W_s,\\
    X_{t,t}(x)=x,
\end{cases}
\end{equation}
where $\bsigma:(0,T)\times\R^d\to\R^{d\times d}$ is a matrix function and $W_s$ is a $d$-dimensional Brownian motion on a probability space $(\Omega,\mathcal{F},\mathbb{P})$. For sufficiently smooth $\b$ and $\bsigma$, by defining the matrix
\begin{equation}\label{def:sigma}
    \a(t,x)=\bsigma(t,x)\bsigma^T(t,x),
\end{equation} one has that the law of $X$ satisfies the equation \eqref{eq:fp} and the Feynman-Kac representation formula holds
\begin{equation}
    u(t,x):=\mathbb{E}[u_0(X_{t,0}(x))],
\end{equation}
where $\mathbb{E}$ denotes the expected value on the probability space $(\Omega,\mathcal{F},\mathbb{P})$. Roughly speaking, the connection between \eqref{eq:fp} and \eqref{eq:sde} is reminiscent of the one between the linear transport equation and the flow of the vector field $\b$ (which can be recovered as the end-point $\bsigma=0$) -- see however \cite{F08, LBL_CPDE, LBL_book} and references therein for a more in-depth discussion of these connections. 
% between \eqref{eq:fp} and \eqref{eq:sde}.
Fokker-Planck equations are also relevant in Statistical Mechanics \cite{Frank-book, SNC}, Mean-Field Games theory \cite{CJPT, LaL, Por}, and Stochastic Analysis \cite{BR, CJ, F08, Ky, SV}. %\gennaro{Inserire altre citazioni?}\\

In this paper we are interested in the case in which both the driving vector field and the diffusion matrix are irregular, i.e. we will require only integrability and/or weak differentiability properties on $\b$ and $\a$. 
Besides the purely theoretical interest, this is a common situation in fluid dynamics models: we refer the reader to \cite[Section 4]{LBL_CPDE} for an overview of modeling polymeric fluids with SDEs and Fokker-Planck equations with irregular coefficients.
%In addition to the purely theoretical interest, this is a common situation in fluid-dynamics models: we refer the reader to \cite[Section 4]{LBL_CPDE} for an overview of modeling polymeric fluids when SDEs and the corresponding Fokker-Planck equations with irregular vector field $\b$ and diffusion matrix $\a$ arise.  

The well-posedness of the Cauchy problem in the smooth context is a classical result, see \cite{LA2}. Out of the smooth setting the main issue are \emph{uniqueness} and \emph{regularity} of solutions.
Indeed, existence results can be obtained by a simple approximation argument: under global $L^p$-bounds on the vector field and the diffusion matrix, one easily establishes energy estimates which allow to apply standard (weak) compactness results. The linearity of the equation ensures that the weak limit is a solution to \eqref{eq:fp}.
At a closer look, however, different a priori estimates are available for \eqref{eq:fp} and this is reflected in the presence of various notions of solutions. Understanding the relationships among different notions of solutions is one of the aims of the present work. We recall that this was the subject of study in a recent survey for constant diffusion matrix, see \cite{BCC2}.

\subsection{Distributional and parabolic solutions} Let $\b\in L^1_tL^p_x$, $\a\in L^1_tL^p_x$, and let $u_0 \in L^q_x$ for some $1 \le p, q\le \infty$ such that $\sfrac{1}{p}+\sfrac{1}{q} \le 1$. With these assumptions it is possible to introduce distributional solutions to \eqref{eq:fp}, i.e. functions $u \in L^\infty_t L^q_x$ solving the equation in the sense of distributions. Define the vector field
\begin{equation}\label{def: tilde b intro}
\tilde\b:=\b-\frac12\sum_j\partial_j a_{ij}.
\end{equation}
Note that, formally, the equation \eqref{eq:fp} is equivalent to
\begin{equation}\tag{FP-div}
    \label{eq:fp-div-intro}
    \partial_t u + \dive(\tilde\b u) -\frac{1}{2} \sum_{i,j}\partial_{i}(a_{ij}\partial_j u)=0,
\end{equation} 
which is also known as the Fokker-Planck equation in {\em divergence form}.
Then, we consider the following set of assumptions:\\
\\
\begin{pschema}
{\bf Assumptions }
\begin{enumerate}[label={\rm (A\arabic*)}]
\item \label{assu:bounded} the diffusion matrix $\a$ is uniformly bounded, i.e. 
\begin{equation*}
\a \in L^\infty((0,T);L^\infty(\R^d;\R^{d\times d})); 
\end{equation*}
\item \label{assu:dive_a_and_b} the diffusion matrix $\a$ satisfies
\begin{equation*}
	\sum_{j=1}^d \partial_j a_{ij}\in L^\infty((0,T)\times \R^d),\hspace{0.5cm} \mbox{for all }i\in  \{1,...,d\};
	\end{equation*}
\item \label{ass:dive b} the vector field $\tilde\b$ satisfies
\begin{equation*}\label{ass:2}
	(\dive \tilde{\b})^{-}=\bigg(\dive \b -\frac{1}{2}\sum_{i,j=1}^d\partial_{ij} a_{ij}\bigg)^-\in L^\infty((0,T)\times\R^d);
\end{equation*}
\item \label{assu:elliptic} the diffusion matrix is uniformly elliptic, i.e. there exists $\alpha>0$ such that for every $\xi \in \R^d$ it holds  
\begin{equation*}
\langle \xi, \a(t,x)\xi\rangle\geq \alpha|\xi|^2, \hspace{0.3cm} \text{for a.e. }(t,x)\in (0,T)\times\R^d. 
\end{equation*}
\end{enumerate}
\end{pschema}
\vspace{0.4cm}

One a priori estimate for smooth solutions is the following
\begin{equation}
\|u(t,\cdot)\|_{L^q}^q\leq \|u_0\|_{L^q}^q+(q-1)\|(\dive\tilde{\b})^{-}\|_\infty\int_0^t\|u(s,\cdot)\|_{L^q}^q\de s,
\end{equation}
and a standard approximation argument provides the existence of a distributional solution to \eqref{eq:fp}. 
However, distributional solutions are not, in general,  unique, not even in the case of constant coefficient diffusion matrices, see \cite{MS3, MS, MS2}.
This motivates the introduction of another notion of solution. Exploiting the presence of the second order operator in the equation, one can show another energy estimate for smooth solutions, namely
\begin{equation*}
\|u(t,\cdot)\|_{L^2}^2+\alpha\int_0^t\int_{\R^d} |\nabla u(s,x)|^2\de x\de s\leq \|u_0\|_{L^2}^2-\int_0^t\int_{\R^d}\dive\tilde{\b}(s,x)|u(s,x)|^2\de x\de s
\end{equation*} 
for every $t \in [0,T]$, which suggests that one should look for solutions possessing $L^2$ gradient, i.e. solutions that are $H^1$ in the space variable. We therefore say that a distributional solution $u \in L^\infty_t L^q_x$ to \eqref{eq:fp} is \emph{parabolic} if it holds $u \in L^2_t H^1_x$. 

Parabolic solutions carry the exact regularity needed to establish their uniqueness under suitable integrability assumptions on $\b$ and $\a$. In the setting of rough drift/diffusion coefficients, two uniqueness results for parabolic solutions are available under slightly different assumptions:
\begin{itemize}
    \item in \cite{F08} it is shown that a parabolic solution $u\in L^\infty_tL^2_x\cap L^2_tH^1_x$ is unique provided that $u_0\in L^2_x$ and $\b \in L^\infty_{t,x}$ satisfy \ref{assu:bounded}, \ref{assu:dive_a_and_b}, \ref{ass:dive b} and \ref{assu:elliptic};
    \item in \cite{LBL_CPDE} it is proved that a parabolic solution $u\in L^\infty_t(L^2_x\cap L^\infty_x)\cap L^2_tH^1_x$ is unique in the set
    $$
    \{ u\in L^2_tH^1_x: \bsigma^T\nabla u\in L^2_{t,x}\},
    $$
    where $\bsigma$ is defined accordingly to \eqref{def:sigma}, provided that% and the following assumptions on $\b$, and $\bsigma$
    \begin{itemize}
    \item $u_0\in L^2_x\cap L^\infty_x$; 
        \item $\b\in L^2_tL^2_{x,\mathrm{loc}}$ satisfy the growth assumption
        $
        \frac{\b}{1+|x|}\in L^1_tL^1_x+L^1_tL^\infty_x;
        $
        \item the matrix $\bsigma\bsigma^T$ is uniformly positive definite;
        \item $\bsigma\in L^\infty_tL^\infty_{x,\mathrm{loc}}$ satisfy the growth assumption
        $
        \frac{\bsigma}{1+|x|}\in L^2_tL^2_x+L^2_tL^\infty_x;
        $
        \item $\dive\tilde\b \in L^1_tL^\infty_x$.
    \end{itemize}
\end{itemize}
The main difference between the two results lies in the $L^\infty$ assumption: \cite{F08} assumes $\b,\a\in L^\infty_{t,x}$, while \cite{LBL_CPDE} assumes $u_0\in L^\infty_x$.
Moreover, the proofs of these two results are based on different techniques: in \cite[Theorem 4.3]{F08} the author uses a functional analytic approach closer to the one commonly used for elliptic/parabolic equations which make use of a Lax-Milgram type argument; on the other hand, the result in \cite{LBL_CPDE} is more ``hyperbolic" in nature resorting to commutator estimates and on the theory of renormalized solutions as done in \cite{DPL} for the linear transport equation. %\gennaro{(in realt\`a in altre dimostrazioni anche Figalli usa rinormalizzazione e flussi, forse va detto da qualche parte)}. 
%%%% PAOLO HO CITATO SOLO THM 4.3 di figalli quind isecondo me la frase va bene così perché li non usa rinormalizzazione 
In this regards, the $L^2_tH^1_x$ regularity of the solution allows to obtain a better control on the so called \emph{commutator}, which measures the error one commits when considering smooth approximations of the solution. In particular, in \cite{LBL_CPDE, LBL} it is shown that the commutator for parabolic solutions converges strongly to $0$ in $L^1_{t,x}$, but better bounds can be established for parabolic solutions $u \in L_t^2H_x^1$ if $\b \in L^2_t L^2_x$ (see Section 2 for a comprehensive list of old and new commutator estimates).
%This approach is somewhat in duality with the DiPerna-Lions' theory for the linear transport equation \cite{DPL}, where the same convergence of the commutator holds provided that $u\in L^\infty_t L^q_x$ and $\nabla \b \in L^1_t L^p_x$ satisfying $1/p+1/q\leq 1$.\\

Relying on these techniques, in Section 3 we provide a presentation of the available results on the well-posedness of \eqref{eq:fp} together with some technical extensions.

We make use of the new commutator estimates to prove the uniqueness of parabolic solutions of \eqref{eq:fp} in a setting that incorporates and generalises those of \cite{F08, LBL_CPDE}, thus providing a unified treatment of the existing literature. Our theorem is the following.
\begin{thm}\label{thm:unicitanostro}
Let $u_0\in L^2\cap L^q(\R^d)$ be a given initial datum and assume that
\begin{itemize}
    \item[(i)] $\b\in L^2((0,T); L^p(\R^d;\R^d))$ with $1/p+1/q= 1/2$;
    \item[(ii)] $\b$ satisfies the growth condition $$\frac{\b}{1+|x|}\in L^1((0,T);L^1(\R^d;\R^d))+L^1((0,T);L^\infty(\R^d;\R^d));$$
    \item[(iii)] $\a$ satisfies \ref{assu:bounded}, \ref{assu:dive_a_and_b} and \ref{assu:elliptic};
    \item[(iv)] the vector field $\tilde \b$ defined in \eqref{def: tilde b intro} satisfies
    $$
    (\dive\tilde \b)^-\in L^1((0,T);L^\infty(\R^d)).
    $$
\end{itemize}
Then, there exists a unique parabolic solution $u$ to \eqref{eq:fp}.
\end{thm}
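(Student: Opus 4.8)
The plan is to combine the renormalization/commutator philosophy of DiPerna-Lions with the parabolic energy estimate. Existence of a parabolic solution is standard: approximating \eqref{eq:fp} (e.g.\ via a vanishing-viscosity or mollification scheme), the a priori bounds recalled in Section~3 --- the $L^q$ estimate, the parabolic $L^2_tH^1_x$ estimate, and the growth assumption (ii), which ensures tightness and prevents loss of mass at spatial infinity --- provide enough compactness, and any limit is a parabolic solution by linearity. The heart of the statement is uniqueness, and by linearity of \eqref{eq:fp} it suffices to prove that a parabolic solution $u$ with $u_0=0$ vanishes identically; we fix such a $u\in L^\infty_t(L^2_x\cap L^q_x)\cap L^2_tH^1_x$.

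First I would rewrite \eqref{eq:fp} in divergence form \eqref{eq:fp-div-intro}. The balance $1/p+1/q=1/2$ gives $\b u\in L^2_tL^2_x$; assumption \ref{assu:dive_a_and_b} together with $u\in L^\infty_tL^2_x$ gives $(\sum_j\partial_j a_{ij})u\in L^2_tL^2_x$; and \ref{assu:bounded} gives $\a\nabla u\in L^2_tL^2_x$. Hence $\partial_t u=-\dive(\tilde\b u)+\tfrac12\sum_{i,j}\partial_i(a_{ij}\partial_j u)$ belongs to $L^2_tH^{-1}_x$, so that $u\in C([0,T];L^2_x)$, the initial condition is attained in $L^2_x$ (whence $u(0)=0$), and the Lions-Magenes chain rule $\tfrac{\de}{\de t}\|u(t)\|_{L^2}^2=2\langle\partial_t u(t),u(t)\rangle_{H^{-1}_x,H^1_x}$ holds for a.e.\ $t$.

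Next I mollify in the space variable, $u_\e:=u*\rho_\e$, obtaining $\partial_t u_\e+\dive(\tilde\b u_\e)-\tfrac12\sum_{i,j}\partial_i(a_{ij}\partial_j u_\e)=r_\e$ in $L^2_tH^{-1}_x$, where the commutator $r_\e$ is the distributional divergence of the transport commutator $\tilde\b u_\e-(\tilde\b u)*\rho_\e$ plus that of the diffusion commutator $\sum_j\big[(a_{ij}\partial_j u)*\rho_\e-a_{ij}\partial_j u_\e\big]$. By the commutator estimates of Section~2 --- whose point is exactly that, thanks to $\tilde\b u\in L^2_tL^2_x$, $\a\in L^\infty$ and $\nabla u\in L^2_tL^2_x$, both commutators are honest differences of sequences converging strongly in $L^2_{t,x}$ --- one has $r_\e\to 0$ in $L^2_tH^{-1}_x$. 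Since $u_\e$ is smooth and bounded in $x$ (and $u_\e(0)=0$), I pair the mollified equation with $u_\e(t)\in H^1_x$: \ref{assu:elliptic} yields $-\tfrac12\langle\sum_{i,j}\partial_i(a_{ij}\partial_j u_\e),u_\e\rangle\ge\tfrac{\alpha}{2}\|\nabla u_\e\|_{L^2}^2$, while the first-order term becomes $-\int\tilde\b u_\e\cdot\nabla u_\e=-\tfrac12\int\tilde\b\cdot\nabla(u_\e^2)=\tfrac12\langle\dive\tilde\b,u_\e^2\rangle$, which is licit since $u_\e^2\in H^1_x$. Reading assumption (iv) as the distributional lower bound $\dive\tilde\b\ge-\|(\dive\tilde\b)^-(t)\|_\infty$, tested against $u_\e^2\ge0$, this produces for a.e.\ $t$
\[
\frac{\de}{\de t}\|u_\e(t)\|_{L^2}^2+\alpha\|\nabla u_\e(t)\|_{L^2}^2\le\|(\dive\tilde\b)^-(t)\|_\infty\,\|u_\e(t)\|_{L^2}^2+2\,\langle r_\e(t),u_\e(t)\rangle_{H^{-1}_x,H^1_x}.
\]

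Integrating in time, using $u_\e(0)=0$ and $\big|\int_0^T\langle r_\e,u_\e\rangle\big|\le\|r_\e\|_{L^2_tH^{-1}_x}\|u_\e\|_{L^2_tH^1_x}=:\omega_\e\to 0$, dropping the nonnegative gradient term and applying Gr\"onwall's lemma with the $L^1$-in-time weight furnished by (iv), I obtain $\sup_{[0,T]}\|u_\e(t)\|_{L^2}^2\le 2\omega_\e\exp\big(\|(\dive\tilde\b)^-\|_{L^1_tL^\infty_x}\big)$; letting $\e\to 0$ and using $u_\e(t)\to u(t)$ in $L^2_x$ for every $t$, I conclude $u\equiv 0$. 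I expect the crux to be the commutator estimate $r_\e\to0$ in $L^2_tH^{-1}_x$: in contrast with the classical setting $\b$ carries no Sobolev regularity and $\a$ no regularity in time, so the whole gain must come from the parabolic bound $u\in L^2_tH^1_x$ and from the sharp exponent relation $1/p+1/q=1/2$ --- it is the latter that puts $\tilde\b u$ into $L^2_tL^2_x$ globally and lets the commutator converge in the \emph{negative} Sobolev norm rather than merely in $L^1_{t,x}$, which is precisely the strength needed to make the pairing $\langle r_\e,u_\e\rangle$ harmless against $u_\e\in L^2_tH^1_x$. A lesser but genuine technical point is justifying the chain-rule manipulation of the split transport term, namely that $u_\e^2$ --- and, were one to run the argument without mollification, $u^2$ itself, via $\nabla(u^2)=2u\nabla u\in L^{p'}_x\cap L^1_x$ --- is an admissible test function for the distribution $\dive\tilde\b$.
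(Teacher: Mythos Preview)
Your strategy is correct and shares the paper's engine: rewrite \eqref{eq:fp} in divergence form, mollify, show the commutators vanish in $L^2_tH^{-1}_x$, derive an energy inequality for $u_\e$, and close by Gr\"onwall. The execution, however, differs from the paper's. The paper (Theorem~\ref{thm:esistenza unicita fp div} combined with Proposition~\ref{prop:equivalence}) tests the mollified equation against $\beta_M'(u^\delta)\varphi_R$, where $\varphi_R$ is a spatial cutoff and $\beta_M$ a bounded $C^2$ truncation of $z\mapsto z^2$; it then sends $\delta\to0$ (using the \emph{local} $L^2_tH^{-1}_x$ commutator convergence against $\beta_M'(u^\delta)\varphi_R\in L^2_tH^1_0(B_{2R})$), next $R\to\infty$ via the growth hypothesis (ii) to kill the boundary term $\int\beta_M(u)\,\tilde\b\cdot\nabla\varphi_R$, and finally $M\to\infty$. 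You instead test directly against $u_\e$ with neither cutoff nor truncation, invoke the \emph{global} $L^2_tH^{-1}_x$ commutator convergence (cf.\ Remark~\ref{rem:global_comm_h-1}), and let $\e\to0$ only at the very end. This is shorter and, as written, never uses (ii) in the uniqueness step.

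The price is paid exactly at the step you yourself flag. Your justification ``$u_\e^2\in H^1_x$'' for the pairing $\langle\dive\tilde\b,u_\e^2\rangle$ is not the right one: $\dive\tilde\b$ need not lie in $H^{-1}_x$, since for $p>2$ one has $\b\in L^p_x\not\subset L^2_x$, and in any case $\sum_j\partial_j a_{ij}\in L^\infty_x\not\subset L^2_x$. What actually makes the inequality $\int\tilde\b\cdot\nabla(u_\e^2)\le\|(\dive\tilde\b)^-\|_\infty\|u_\e\|_{L^2}^2$ work is that $\tilde\b u_\e^2\in L^1_x$: from $u_\e\in L^2\cap L^q$ one gets $u_\e^2\in L^1\cap L^{q/2}\subset L^{p'}$ (here $1/p+1/q=1/2$ is used again), so with a cutoff $\varphi_R$ the boundary term satisfies $\bigl|\int u_\e^2\,\tilde\b\cdot\nabla\varphi_R\bigr|\le \tfrac{C}{R}\int_{|x|>R}|\tilde\b|u_\e^2\to0$. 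The paper achieves the same vanishing via (ii) together with the uniform bound $\beta_M\le 2M^2$; your route can bypass (ii), but only once this $L^1$ integrability of $\tilde\b u_\e^2$ is made explicit and a cutoff is reinstated to legitimise the integration by parts. In short, the point you call ``lesser'' is precisely where the paper's extra machinery---or an equivalent substitute---is doing real work.
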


Note that if $p=\infty$ and $q=2$ we precisely recover the existence and uniqueness of parabolic solutions to \eqref{eq:fp} in \cite[Theorem 4.3]{F08}, while when $p=2$ and $q=\infty$ we obtain \emph{essentially}  the corresponding result in \cite{LBL_CPDE} -- see however Subsection \ref{sss:comparison} below for a detailed comparison between the two results. 

\subsection{Regularity and uniqueness of distributional solutions}
According to our definitions, parabolic solutions cannot always be defined, but if they can, then they are always distributional. The converse implication is, in general, not true: in \cite{MS3} it is shown that there exist infinitely many distributional solutions $u\in L^\infty_t L^2_x$ to \eqref{eq:fp} with a divergence-free vector field $\b \in L^\infty_t L^2_x$ and $\a=\mathrm{Id}$, while the parabolic one is unique. This motivates the search for a condition that guarantees \emph{parabolic regularity} of a distributional solution. In addition to the uniqueness of parabolic solutions, in \cite{F08} it is proved that the condition 
\[
\partial_t a_{ij}\in L^\infty((0,T)\times\R^d)
\] 
for $i,j=1,...,d$, implies %that existence and uniqueness holds in $L^2((0,T)\times\R^d)$, hence in particular any 
that every solution $u\in L^2((0,T)\times\R^d)$ to \eqref{eq:fp} belongs to the space 
$$
Y:=\{u\in L^2((0,T);H^1(\R^d)):\partial_t u\in L^2((0,T); H^{-1}(\R^d))\}.
$$

The idea of the proof is simple: once the solutions in $Y$ are proved to be unique, uniqueness in the functional space $L^2_{t,x}$ follows from a regularity argument which makes use of the condition $\partial_t \a\in L^\infty_{t,x}$. Our purpose is to investigate the role of the assumption on the time derivative of $\a$. In this regard, we show that if we replace $\partial_t a_{ij}\in L^\infty_{t,x}$ with some Sobolev regularity in space, distributional solutions are actually more regular. Our theorem reads as follows: 
\begin{thm}\label{thm:main1}
 Let $p,q \ge 1$ such that $\sfrac{1}{p}+\sfrac{1}{q}\leq \sfrac{1}{2}$. Assume that $\b\in L^2((0,T); L^p(\R^d;\R^d))$ and $\a\in L^2((0,T);W^{1,p}(\R^d;\R^{d \times d}))$ satisfy \ref{assu:dive_a_and_b}, \ref{assu:elliptic}, and $(\dive \tilde \b)^- \in L^1((0,T);L^\infty(\R^d))$. Let $u\in L^\infty((0,T);L^q(\R^d))$ be a distributional solution to \eqref{eq:fp}, then $u\in L^2((0,T);H^1_\mathrm{loc}(\R^d))$.
\end{thm}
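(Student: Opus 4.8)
The plan is to establish the local parabolic regularity by a DiPerna-Lions-type scheme: mollify the equation in the space variable, test the mollified equation against a spatial localisation of the solution itself, produce the good term $\int\phi^2|\nabla u_\e|^2$ out of uniform ellipticity, and absorb into it all the remaining contributions --- in particular the commutator errors. The genuinely new ingredient is a commutator estimate controlling the \emph{second-order} commutator not merely in $L^1_{t,x}$ but in $L^2_tL^2_{\mathrm{loc}}$, i.e.\ exactly in the space paired with $\nabla u_\e$ in the energy identity; its $L^2$-regularity in time will come from $\a\in L^2_tW^{1,p}_x$ and its $L^2$-regularity in space from $\sfrac{1}{p}+\sfrac{1}{q}\le\sfrac{1}{2}$. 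Concretely, first I would fix $\phi\in C^\infty_c(\R^d)$ with $\phi\ge0$, a standard family of mollifiers $\rho_\e$, set $u_\e:=u\ast\rho_\e$ (convolution in $x$ only), and --- noting that under the present hypotheses \eqref{eq:fp} is distributionally equivalent to its divergence form \eqref{eq:fp-div-intro} --- mollify the latter in space, obtaining
\begin{equation*}
\partial_t u_\e+\dive(\tilde\b\,u_\e)-\tfrac12\sum_{i,j}\partial_i\big(a_{ij}\partial_j u_\e\big)=\dive S^1_\e-\tfrac12\dive S^2_\e=:R_\e,
\end{equation*}
where $S^1_\e:=\tilde\b\,u_\e-(\tilde\b\,u)\ast\rho_\e$, $(S^2_\e)_i:=\sum_j a_{ij}\partial_j u_\e-\big(\sum_j a_{ij}\partial_j u\big)\ast\rho_\e$, and $a_{ij}\partial_j u:=\partial_j(a_{ij}u)-(\partial_j a_{ij})u$ is a well-defined distribution since $\sfrac{1}{p}+\sfrac{1}{q}\le1$. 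For fixed $\e$ the function $u_\e$ is smooth in $x$, lies in $L^\infty_tC^\infty_x\cap L^2_tH^1_{\mathrm{loc}}$, and since $\tilde\b u_\e$, $a_{ij}\partial_j u_\e$, $S^1_\e$, $S^2_\e$ all belong to $L^2_tL^2_{\mathrm{loc}}$ one gets $\partial_t u_\e\in L^2_tH^{-1}_{\mathrm{loc}}$; hence $u_\e\in C([0,T];L^2_{\mathrm{loc}}(\R^d))$ and $u_\e\phi^2$ is a licit test function in the localised weak formulation.

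The core step is the uniform estimate of $S^1_\e$ and $S^2_\e$ in $L^2((0,T)\times B_R)$ for each ball $B_R\subset\R^d$. For $S^1_\e$ this is immediate from H\"older's inequality and the fact that mollification does not increase norms: $\|S^1_\e\|_{L^2((0,T)\times B_R)}\le C(R)\|u\|_{L^\infty_tL^q}\|\tilde\b\|_{L^2_tL^p}$, the condition $\sfrac{1}{p}+\sfrac{1}{q}\le\sfrac{1}{2}$ ensuring that $\tilde\b u$, and so $S^1_\e$, is locally in $L^2_tL^2_x$ uniformly in $\e$. For $S^2_\e$ one recognises that $(S^2_\e)_i=-(\mathcal R_\e)_i$, where $(\mathcal R_\e)_i:=\big(\sum_j a_{ij}\partial_j u\big)\ast\rho_\e-\sum_j a_{ij}\partial_j u_\e$ is exactly the DiPerna-Lions commutator of the vector field $(a_{i1},\dots,a_{id})$ acting on the function $u$. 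The classical commutator identity gives, for a.e.\ $t$,
\begin{equation*}
(\mathcal R_\e)_i=-\Big(\big(\textstyle\sum_j\partial_j a_{ij}\big)u\Big)\ast\rho_\e+\sum_j\int\frac{a_{ij}(x-\e z)-a_{ij}(x)}{\e}\,u(x-\e z)\,(\partial_j\rho)(z)\,\de z,
\end{equation*}
and bounding the difference quotient by $\nabla a_{ij}$ and using \ref{assu:dive_a_and_b} for the first term yields
\begin{equation*}
\|S^2_\e(t)\|_{L^r(B_R)}\le C(R,\rho)\,\|\a(t)\|_{W^{1,p}(\R^d)}\,\|u(t)\|_{L^q(\R^d)},\qquad \tfrac1r=\tfrac1p+\tfrac1q\le\tfrac12,
\end{equation*}
together with $\|S^2_\e(t)\|_{L^r(B_R)}\to0$ as $\e\to0$. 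Since $\sfrac{1}{r}\le\sfrac{1}{2}$ we have $r\ge2$, so, $B_R$ being bounded and $\a\in L^2_tW^{1,p}_x$, squaring the last estimate and integrating in time gives $\|S^2_\e\|_{L^2((0,T)\times B_R)}\le C(R)\|\a\|_{L^2_tW^{1,p}}\|u\|_{L^\infty_tL^q}$, uniformly in $\e$ (in fact $S^2_\e\to0$ in $L^2((0,T)\times B_R)$, although only the bound is needed). This is the single point where the $L^2$-in-time integrability of $\a$ and the condition $\sfrac{1}{p}+\sfrac{1}{q}\le\sfrac{1}{2}$ are essential; it is the exact analogue, for the second-order operator, of the renormalisation commutator estimate of DiPerna-Lions.

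Next I would run the localised energy estimate: I test the mollified equation against $u_\e\phi^2$; the usual energy computation and uniform ellipticity \ref{assu:elliptic} give, for a.e.\ $0<\tau<t\le T$,
\begin{equation*}
\tfrac12\!\int u_\e(t)^2\phi^2\,\de x+\tfrac{\alpha}{2}\!\int_\tau^t\!\!\int\phi^2|\nabla u_\e|^2\,\de x\,\de s\ \le\ \tfrac12\!\int u_\e(\tau)^2\phi^2\,\de x+\mathcal E_\e,
\end{equation*}
where $\mathcal E_\e$ collects, up to signs: the diffusion cross-term $\sum_{i,j}\int_\tau^t\!\!\int\phi u_\e a_{ij}\partial_j u_\e\,\partial_i\phi$, controlled via Young's inequality and $\a\in L^2_tL^p_x$; the transport terms $\int_\tau^t\!\!\int\phi^2 u_\e\,\tilde\b\!\cdot\!\nabla u_\e$ and $\int_\tau^t\!\!\int\phi u_\e^2\,\tilde\b\!\cdot\!\nabla\phi$, controlled via Young and H\"older using $\tilde\b\in L^2_tL^p_x$, $u\in L^\infty_tL^q_x$ and $\sfrac{1}{p}+\sfrac{1}{q}\le\sfrac{1}{2}$ (in particular $\int_0^T\!\!\int\phi^2 u_\e^2|\tilde\b|^2\,\de x\,\de s\le C$); and the commutator term $-\int_\tau^t\!\!\int(S^1_\e-\tfrac12 S^2_\e)\!\cdot\!(\phi^2\nabla u_\e+2u_\e\phi\nabla\phi)$, controlled via Young together with the two bounds of the previous step. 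In every error term the part proportional to $\int_\tau^t\!\!\int\phi^2|\nabla u_\e|^2$ is split off by Young's inequality with a small constant and reabsorbed on the left, while all the rest is bounded --- uniformly in $\e$ and $\tau$ --- by a constant depending only on $R$, $T$, $\alpha$, $\|u\|_{L^\infty_tL^q}$, $\|\b\|_{L^2_tL^p}$, $\|\a\|_{L^2_tW^{1,p}}$ and $\max_i\|\sum_j\partial_j a_{ij}\|_{L^\infty}$ (the standing hypothesis $(\dive\tilde\b)^-\in L^1_tL^\infty$ is not actually needed here). Letting $\tau\downarrow0$ and $t=T$ gives $\int_0^T\!\!\int_{B_R}|\nabla u_\e|^2\,\de x\,\de s\le C(R)$, uniformly in $\e$.

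Finally, for each $R>0$ the family $\{\nabla u_\e\}_\e$ is bounded in $L^2((0,T)\times B_R)$, hence has a weak limit $g$ there along a subsequence; since $u_\e\to u$ in $L^1((0,T)\times B_R)$, one has $\nabla u_\e\to\nabla u$ in $\mathcal D'$, so $g=\nabla u$ on $(0,T)\times B_R$. As $R$ is arbitrary, $\nabla u\in L^2_tL^2_{\mathrm{loc}}$, i.e.\ $u\in L^2([0,T];H^1_{\mathrm{loc}}(\R^d))$, which is the assertion. I expect the main obstacle to be the commutator bound of the second step: one has to view $S^2_\e$ as a DiPerna-Lions commutator acting on the merely $L^q$ function $u$ and estimate it in $L^2_tL^2_{\mathrm{loc}}$ --- precisely the strength needed to reabsorb it against the ellipticity term --- and this is exactly where the hypotheses $\a\in L^2_tW^{1,p}_x$ and $\sfrac{1}{p}+\sfrac{1}{q}\le\sfrac{1}{2}$ come into play; the remaining steps are routine localised parabolic bookkeeping.
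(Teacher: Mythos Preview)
Your proof is correct and follows essentially the same scheme as the paper's: mollify, test against a localised copy of the solution, extract $\alpha\int\phi^2|\nabla u_\e|^2$ from ellipticity, and absorb the commutator errors. The packaging differs slightly. The paper treats the transport and second-order commutators $r^\delta,s^\delta$ as abstract $L^2_tH^{-1}_{\mathrm{loc}}$ objects (Lemmas~\ref{lem:conv_comm2} and~\ref{lem:conv_comm4}) and pairs them against $u^\delta\varphi_R\in L^2_tH^1_0$, whereas you write them as divergences $\dive S^1_\e,\dive S^2_\e$ and bound $S^1_\e,S^2_\e$ directly in $L^2_{t,x,\mathrm{loc}}$ via the classical DiPerna--Lions identity; these are the same estimate expressed dually. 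One genuine difference: in the energy identity the paper integrates $\int\phi^2 u_\e\,\tilde\b\!\cdot\!\nabla u_\e$ by parts to produce $\int\phi^2(\dive\tilde\b)|u_\e|^2$ and then invokes the hypothesis $(\dive\tilde\b)^-\in L^1_tL^\infty_x$, while you absorb this term directly via Young, using only $\tilde\b u\in L^2_tL^2_{\mathrm{loc}}$ (which follows from $\sfrac1p+\sfrac1q\le\sfrac12$). Your observation that the $(\dive\tilde\b)^-$ hypothesis is not actually needed for the regularity statement is correct and is a small sharpening of the paper's result.
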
 

The proof relies again on a new commutator estimate: we show that, in the current regime, the convergence to zero of the commutators takes place in $L^2_tH^{-1}_{x,\mathrm{loc}}$ and this, together with the energy estimate, implies the regularity of distributional solutions. Moreover, in the particular case $p=\infty, q=2$ we obtain the uniqueness of distributional solutions in $L^2_{t,x}$ -- notice that here we need the stronger assumption \ref{ass:dive b} instead of the weaker $L^1_tL^\infty_x$ control on $(\dive \tilde{\b})^-$. 
\begin{thm}\label{cor:fig}
Assume that $\b\in L^\infty((0,T);L^\infty(\R^d))$ and that \ref{assu:bounded}, \ref{assu:dive_a_and_b}, \ref{ass:dive b} and \ref{assu:elliptic} are fulfilled. Moreover, assume that $\nabla\a\in L^\infty((0,T)\times \R^d)$. Then, any distributional solution $u\in L^2((0,T)\times\R^d)$ of \eqref{eq:fp} belongs to $Y$ and thus it is unique.
\end{thm}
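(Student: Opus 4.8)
The plan is to deduce the statement from Theorem~\ref{thm:main1} and Theorem~\ref{thm:unicitanostro}, bridging the two by a global energy estimate which, thanks to the boundedness of all the coefficients, closes up to the boundary of $\R^d$. First I would check that the hypotheses of Theorem~\ref{thm:main1} are met with $p=\infty$ and $q=2$ (so that $\sfrac{1}{p}+\sfrac{1}{q}=\sfrac{1}{2}$): indeed $\b\in L^\infty([0,T]\times\R^d)\subset L^2((0,T);L^\infty(\R^d))$; by \ref{assu:bounded} together with the extra hypothesis $\nabla\a\in L^\infty([0,T]\times\R^d)$ one has $\a\in L^\infty((0,T);W^{1,\infty}(\R^d))\subset L^2((0,T);W^{1,\infty}(\R^d))$; \ref{assu:dive_a_and_b} and \ref{assu:elliptic} are assumed; and $(\dive\tilde\b)^-\in L^\infty([0,T]\times\R^d)\subset L^1((0,T);L^\infty(\R^d))$ by \ref{ass:dive b}. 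Hence any distributional solution $u$ already satisfies $u\in L^2((0,T);H^1_{\mathrm{loc}}(\R^d))$.

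Next I would upgrade this local regularity to the global bound $u\in L^2((0,T);H^1(\R^d))$. Writing \eqref{eq:fp} in divergence form \eqref{eq:fp-div-intro}, one has $\partial_t u=-\dive(\tilde\b u)+\tfrac12\sum_{i,j}\partial_i(a_{ij}\partial_j u)\in L^2_tH^{-1}_{x,\mathrm{loc}}$ (since $\tilde\b=\b-\tfrac12\sum_j\partial_j a_{ij}\in L^\infty_{t,x}$ by \ref{assu:dive_a_and_b} and $\b\in L^\infty_{t,x}$, and $a_{ij}\in L^\infty_{t,x}$ by \ref{assu:bounded}), so $u$ admits a representative in $C([0,T];L^2_{\mathrm{loc}}(\R^d))$ with $u(0,\cdot)=u_0$ in $L^2_{\mathrm{loc}}(\R^d)$. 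Fix $\chi\in C^\infty_c(\R^d)$ with $0\le\chi\le1$, $\chi\equiv1$ on $B_1$, $\mathrm{supp}\,\chi\subset B_2$, and set $\chi_R(x)=\chi(x/R)$; then $\chi_R^2 u$ is an admissible test function in \eqref{eq:fp-div-intro}, and $\chi_R^2u^2\in W^{1,1}(\R^d)$ has compact support, so the distribution $\dive\tilde\b$ pairs meaningfully against it. Testing and integrating by parts, for a.e.\ $t\in[0,T]$,
\begin{align*}
\tfrac12\int_{\R^d}\chi_R^2u^2(t)\,\de x+\tfrac12\int_0^t\!\!\int_{\R^d}\chi_R^2\langle\nabla u,\a\nabla u\rangle\,\de x\,\de s
&=\tfrac12\int_{\R^d}\chi_R^2u_0^2\,\de x-\tfrac12\int_0^t\!\!\int_{\R^d}\chi_R^2\,(\dive\tilde\b)\,u^2\,\de x\,\de s\\
&\quad+\int_0^t\!\!\int_{\R^d}\chi_R\,u^2\,\tilde\b\cdot\nabla\chi_R\,\de x\,\de s-\int_0^t\!\!\int_{\R^d}\chi_R\,u\,\langle\nabla\chi_R,\a\nabla u\rangle\,\de x\,\de s.
\end{align*}
By \ref{assu:elliptic} the second term on the left is $\ge\tfrac{\alpha}{2}\int_0^t\!\int\chi_R^2|\nabla u|^2$; by \ref{ass:dive b} the $\dive\tilde\b$-term on the right is $\le\tfrac12\|(\dive\tilde\b)^-\|_\infty\int_0^t\!\int\chi_R^2u^2$; and, using $\|\nabla\chi_R\|_\infty\le C/R$, $\b,\a\in L^\infty_{t,x}$ and Young's inequality (pairing $|\nabla\chi_R|$ with $u$, not with $\nabla u$), the two cutoff terms are bounded by $\tfrac{\alpha}{4}\int_0^t\!\int\chi_R^2|\nabla u|^2+\tfrac{C}{R}\|u\|_{L^2((0,T)\times\R^d)}^2$. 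Absorbing the gradient contributions on the left and using Gronwall's lemma in $t$ produces bounds on $\sup_t\int\chi_R^2u^2$ and on $\int_0^T\!\int\chi_R^2|\nabla u|^2$ that are \emph{uniform in $R\ge1$}; letting $R\to\infty$ by monotone convergence gives $\nabla u\in L^2((0,T)\times\R^d)$.

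With $u\in L^2_tH^1_x$ in hand, $\tilde\b u\in L^2_{t,x}$ and $a_{ij}\partial_j u\in L^2_{t,x}$, so the divergence form of the equation now yields $\partial_t u\in L^2((0,T);H^{-1}(\R^d))$, that is $u\in Y$; by the classical embedding $Y\hookrightarrow C([0,T];L^2(\R^d))$ the function $u$ is a parabolic solution of \eqref{eq:fp} with datum $u_0\in L^2(\R^d)$. Finally I would apply Theorem~\ref{thm:unicitanostro} with $p=\infty$, $q=2$: its hypotheses hold because $\b\in L^2_tL^\infty_x$, the growth condition is satisfied since $\b/(1+|x|)\in L^1_tL^\infty_x$, $\a$ satisfies \ref{assu:bounded}, \ref{assu:dive_a_and_b}, \ref{assu:elliptic}, and $(\dive\tilde\b)^-\in L^1_tL^\infty_x$ by \ref{ass:dive b}. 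Hence the parabolic solution with datum $u_0$ is unique; since by the previous steps every distributional solution in $L^2((0,T)\times\R^d)$ with that datum is such a parabolic solution, uniqueness in $L^2((0,T)\times\R^d)$ follows.

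The main obstacle is the energy estimate of the second step. On the one hand, one must justify rigorously the use of $\chi_R^2 u$ as a test function for a merely distributional solution: this is exactly where the local $H^1$-regularity furnished by Theorem~\ref{thm:main1} (equivalently, the new $L^2_tH^{-1}_{x,\mathrm{loc}}$ commutator estimate underlying it) enters, as it makes $\chi_R^2 u$ a bona fide $L^2_tH^1_x$ test function and $\chi_R^2u^2$ a bona fide compactly supported $W^{1,1}$-function to pair with the distribution $\dive\tilde\b$. On the other hand --- and this is the genuinely delicate point --- one must arrange the cutoff error terms so that the constants in the estimate are uniform in $R$; it is precisely here that the \emph{global} $L^\infty$ bounds on $\b$, $\a$ and $\nabla\a$, together with the $L^\infty_{t,x}$ (rather than merely $L^1_tL^\infty_x$) control on $(\dive\tilde\b)^-$ given by \ref{ass:dive b}, are used.
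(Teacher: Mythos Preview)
Your overall strategy coincides with the paper's: upgrade a distributional solution to $L^2_tH^1_x$ and then invoke the uniqueness of parabolic solutions. However, there is a genuine gap at the very first step. Theorem~\ref{thm:main1} is stated for $u\in L^\infty((0,T);L^q(\R^d))$, whereas the hypothesis of Theorem~\ref{cor:fig} only gives $u\in L^2((0,T)\times\R^d)=L^2_tL^2_x$. You check all the assumptions on $\b$ and $\a$ but never verify this hypothesis on $u$, and indeed it is not part of the data; so Theorem~\ref{thm:main1} cannot be invoked as a black box. Since your second step (testing with $\chi_R^2u$) relies precisely on the local $H^1$ regularity you claim to extract from Theorem~\ref{thm:main1}, the argument as written is circular.

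The paper closes this gap by observing that the \emph{proof} of the regularity theorem (Theorem~\ref{thm:regolarita_FP}) can be re-run starting only from $u\in L^2_tL^2_x$, provided the coefficients are $L^\infty$ in time rather than merely $L^2$: in each H\"older estimate one swaps the $L^\infty_t$ factor from $u$ onto $\tilde\b$, $\a$, $\nabla\a$ and $(\dive\tilde\b)^-$ --- this is where assumption~\ref{ass:dive b} (rather than the weaker $L^1_tL^\infty_x$ bound) is actually used. This yields $u\in L^\infty_tL^2_x$ and $u\in L^2_tH^1_\mathrm{loc}$ simultaneously. After that the paper notes, as you do, that with $q=2$ the powers of $R$ in~\eqref{eq:stima_locale} are non-positive and, by Remark~\ref{rem:global_comm_h-1}, the commutator bounds are globally uniform, so one may let $R\to\infty$ and conclude via Theorem~\ref{thm:uniqueness_weak_parabolic_FP}. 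Your direct cutoff energy identity is a valid alternative to re-running the mollification argument \emph{once} local $H^1$ regularity is in hand; but you must obtain that regularity by adapting the proof of Theorem~\ref{thm:main1} to the $L^2_tL^2_x$ setting rather than by quoting its statement.
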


The theorem above is somehow transverse to \cite[Theorem 4.8]{F08}, i.e. we replace the assumption on the time derivative $\partial_t \a \in L^\infty_{t,x}$ with an assumption on the spatial gradient $\nabla \a \in L^\infty_{t,x}$. Note that we need the $L^\infty_t$ assumption on $\nabla\a$ to compensate the $L^2_t$ assumption on $u$.

Furthermore, the \emph{local} regularity in space  $H^1_\mathrm{loc}$ provided by Theorem \ref{thm:main1} suggests investigating whether uniqueness of solutions to \eqref{eq:fp} holds in this larger functional space.
We will say that a distributional solution $u \in L^\infty_t L^q_x$ to \eqref{eq:fp} is \emph{locally parabolic} if it holds $u \in L^2_t H^1_{x,\mathrm{loc}}$. Assuming suitable growth conditions on $\b,\a$ (similarly to \cite{DPL, LBL_CPDE}), we prove that distributional solutions are locally parabolic and unique in the class $L^\infty((0,T);L^q(\R^d))$:

\begin{thm}\label{thm:main2}
Let $u_0\in L^q\cap L^\infty(\R^d)$ be a given initial datum for some $q>2$ and assume that $\b\in L^2((0,T);L^p(\R^d))$ and $\a\in L^2((0,T);W^{1,p}(\R^d))$ with $1/p+1/q=1/2$ satisfy \ref{assu:dive_a_and_b} and \ref{assu:elliptic}. Moreover, we assume the following growth conditions
\begin{equation}
\frac{\b(t,x)}{1+|x|},\frac{\partial_j a_{ij}(t,x)}{1+|x|},\frac{\a(t,x)}{1+|x|^2}\in L^1((0,T);L^1(\R^d))+L^1((0,T);L^\infty(\R^d)).
\end{equation}
Then, there exists at most one distributional solution $u\in L^\infty((0,T);L^q(\R^d))\cap L^2((0,T);H^1_\mathrm{loc}(\R^d))$.
\end{thm}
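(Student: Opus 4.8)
The plan is to combine two ingredients: the \emph{regularity} already provided by Theorem~\ref{thm:main1} (which, under the present hypotheses, tells us that any distributional solution $u\in L^\infty_tL^q_x$ automatically belongs to $L^2_tH^1_{x,\mathrm{loc}}$), and a \emph{uniqueness} argument in the locally parabolic class that runs along the lines of Theorem~\ref{thm:unicitanostro} but with the global integrability replaced by local integrability plus the stated growth conditions. Since the equation \eqref{eq:fp} is linear, it suffices to show that a distributional solution $u\in L^\infty_tL^q_x\cap L^2_tH^1_{x,\mathrm{loc}}$ with $u_0=0$ must vanish identically. The natural strategy is to renormalize: derive a (local) energy identity for $|u|^2$ (or, more robustly, for a truncated/approximated renormalization $\beta(u)$), test it against a suitable family of cutoffs $\varphi_R$, and close a Gronwall inequality after sending $R\to\infty$.

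The first step is the \textbf{commutator estimate in divergence form}. Writing the equation as \eqref{eq:fp-div-intro}, one mollifies in space, $u^\delta=u*\rho_\delta$, and controls the commutators between mollification and the first-order term $\dive(\tilde\b u)$ as well as the second-order term $\sum_{ij}\partial_i(a_{ij}\partial_j u)$. The key point, guaranteed by the new commutator estimates of Section~2 together with $u\in L^2_tH^1_{x,\mathrm{loc}}$, $\b\in L^2_tL^p_{x}$, $\a\in L^2_tW^{1,p}_x$ and $\tfrac1p+\tfrac1q\le\tfrac12$, is that these commutators converge to zero strongly in $L^1_{t,x,\mathrm{loc}}$ (indeed in $L^2_tH^{-1}_{x,\mathrm{loc}}$ for the diffusive part). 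This legitimizes the chain rule and yields, for a fixed test function $\varphi\in C^\infty_c(\R^d)$, $\varphi\ge0$, the local energy inequality
\begin{equation*}
\int_{\R^d}\varphi\,|u(t,x)|^2\,\de x+\alpha\int_0^t\!\!\int_{\R^d}\varphi\,|\nabla u|^2\,\de x\,\de s\le C\int_0^t\!\!\int_{\R^d}\Big(\varphi+|\nabla\varphi|+|\nabla^2\varphi|\Big)\Big(1+|\b|+|\nabla\a|+|\a|\Big)|u|\big(|u|+|\nabla u|\big)\,\de x\,\de s.
\end{equation*}

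The second step is the \textbf{cutoff and Gronwall argument}. One chooses $\varphi=\varphi_R$ with $\varphi_R(x)=\psi(x/R)$, $\psi$ a fixed bump equal to $1$ on the unit ball, so that $|\nabla\varphi_R|\le C/R$, $|\nabla^2\varphi_R|\le C/R^2$, and $\operatorname{supp}\nabla\varphi_R\subset\{R\le|x|\le 2R\}$. On that annulus $1+|x|\sim R$ and $1+|x|^2\sim R^2$, so $|\nabla\varphi_R|\,|\b|\lesssim \tfrac{|\b|}{1+|x|}$, $|\nabla\varphi_R|\,|\nabla\a|\lesssim\tfrac{|\nabla\a|}{1+|x|}=\tfrac{|\partial_j a_{ij}|}{1+|x|}$, and $|\nabla^2\varphi_R|\,|\a|\lesssim\tfrac{|\a|}{1+|x|^2}$; the growth hypotheses put each of these in $L^1_tL^1_x+L^1_tL^\infty_x$, so after using $u\in L^\infty_tL^q_x$ and $q>2$ (to absorb $|u|^2$ on the annulus via the $L^1$ part by dominated convergence, and to pair with the $L^\infty$ part) the error terms supported on $\{|x|\ge R\}$ tend to $0$ as $R\to\infty$. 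The remaining terms carry the factor $\varphi_R\le 1$ and, after using the $(\dive\tilde\b)^-\in L^1_tL^\infty_x$ bound coming from \ref{assu:dive_a_and_b} (which makes the zeroth-order contribution of the drift controllable), reduce to
\begin{equation*}
g(t):=\int_{\R^d}|u(t,x)|^2\,\de x\le \int_0^t h(s)\,g(s)\,\de s,\qquad h\in L^1(0,T),
\end{equation*}
whence $g\equiv0$ by Gronwall.

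The \textbf{main obstacle} I expect is making the cross terms $|\nabla\varphi_R|\,|\nabla\a|\,|u|\,|\nabla u|$ and $|\nabla^2\varphi_R|\,|\a|\,|u|^2$ genuinely vanish in the limit without losing the coercive term: one must be careful that the gradient $\nabla u$ appearing there is only locally $L^2$ and the weights $\a,\nabla\a$ only locally $L^p$, so the splitting of the growth condition into its $L^1_x$ and $L^\infty_x$ pieces has to be done \emph{before} invoking Hölder, and the $L^1_x$ piece must be handled by dominated convergence using $u\in L^\infty_tL^q_x$ with $q>2$ rather than by a crude bound. A secondary technical point is ensuring the energy identity itself is rigorous: since $u$ is only $H^1_{\mathrm{loc}}$, one should either work throughout with the renormalization $\beta_M(u)=$ a bounded $C^1$ truncation and only at the end let $M\to\infty$ (using $u\in L^\infty_tL^q_x$ to pass to the limit), or justify directly that $\varphi_R|u|^2$ is an admissible test object; both are routine once the commutators are shown to vanish locally, which is the substantive input supplied by the machinery of Section~2 and Theorem~\ref{thm:main1}.
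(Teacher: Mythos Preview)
Your proposal is correct and follows essentially the same route as the paper: mollify, pass to the divergence form \eqref{eq:fp-div-intro}, renormalize with a (truncated) $\beta$ and a cutoff $\varphi_R$, use the $L^2_tH^{-1}_{x,\mathrm{loc}}$ convergence of the commutators (legitimized by the $H^1_{\mathrm{loc}}$ regularity from Theorem~\ref{thm:main1}), control the annulus terms via the growth hypotheses, and close with Gronwall. Two small corrections to your outline: the cross term $|\nabla\varphi_R|\,|\nabla\a|\,|u|\,|\nabla u|$ you flag as the main obstacle never actually appears, because after one further integration by parts $\int a_{ij}\,\partial_j\beta(u^\delta)\,\partial_i\varphi_R$ becomes $\int\partial_j a_{ij}\,\beta(u^\delta)\,\partial_i\varphi_R+\int a_{ij}\,\beta(u^\delta)\,\partial_{ij}\varphi_R$, so only zeroth-order boundary terms in $u$ survive (this is exactly why the growth hypotheses are stated for $\partial_j a_{ij}/(1+|x|)$ and $\a/(1+|x|^2)$); and the Gronwall step must be carried out \emph{with $\varphi_R$ still present}, yielding $\int\beta(u)\varphi_R\le F(R)\exp(\|(\dive\tilde\b)^-\|_{L^1L^\infty})$ with $F(R)\to0$, and only then does one let $R\to\infty$---your displayed $g(t)=\int_{\R^d}|u|^2\,\de x$ is not known to be finite since $u\in L^\infty_tL^q_x$ with $q>2$ need not lie in $L^2_x$. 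The paper in fact renormalizes with $\beta(z)=|z|^q$ rather than $|z|^2$, but this choice is inessential once the truncation $\beta_M$ (which you correctly mention) is in place.
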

In the proof we use the assumptions on $p,q$ and the Regularity Theorem \ref{thm:main1} to show the existence of a solution in $L^\infty((0,T);L^q(\R^d))\cap L^2((0,T);H^1_\mathrm{loc}(\R^d))$. Then we exploit the growth conditions on $\b$ and $\a$ to use a Gronwall type argument following the same strategy of Theorem \ref{thm:unicitanostro}. Finally, we show how we can play with the integrability exponents in the growth conditions to guarantee that a distributional solution is in $L^2((0,T); H^1(\R^d))$, obtaining the following result: 
\begin{thm}\label{teo:growth}
Let $u\in L^\infty((0,T); L^2\cap L^q(\R^d))$ be a distributional solution of \eqref{eq:fp}, and let $\b\in L^2((0,T);L^p(\R^d))$ and $\a\in L^2((0,T);W^{1,p}(\R^d))$ with $1/p+1/q = 1/2$ satisfy \ref{assu:dive_a_and_b} and \ref{assu:elliptic}. Moreover, assume that $2<q\leq \frac{2d}{d-2}$ together with the following growth conditions
$$
\frac{\b(t,x)}{1+|x|},\frac{\partial_j a_{ij}(t,x)}{1+|x|}, \frac{\a(t,x)}{1+|x|^2}\in L^1((0,T);L^{\frac{q}{q-2}}(\R^d))+L^1((0,T);L^\infty(\R^d)).
$$
Then $u\in L^2((0,T);H^1(\R^d))$. 
\end{thm}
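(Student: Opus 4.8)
The plan is to upgrade the local regularity $u\in L^2([0,T];H^1_{\mathrm{loc}}(\R^d))$ given by the Regularity Theorem~\ref{thm:main1} to the global statement by means of a localized energy estimate on dyadic annuli, the quantitative growth conditions on $\b$ and $\a$ being exactly what makes the resulting tail errors summable. First I would invoke Theorem~\ref{thm:main1} to get $u\in L^2_tH^1_{x,\mathrm{loc}}$ — which legitimates the integrations by parts below — and fix a cutoff $\chi\in C^\infty_c(\R^d)$ with $\mathbbm 1_{B_1}\le\chi\le\mathbbm 1_{B_2}$, setting $\chi_R(x):=\chi(x/R)$, so that $|\nabla\chi_R|+R|D^2\chi_R|\lesssim R^{-1}\mathbbm 1_{\{R\le|x|\le 2R\}}$.

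The core step is to test the divergence-form equation~\eqref{eq:fp-div-intro} against $\chi_R^2 u$ (rigorously, against a time mollification of it, the pairing with $\partial_t u$ being meaningful since the equation gives $\partial_t u\in L^2_tH^{-1}_{x,\mathrm{loc}}$) and, using ellipticity~\ref{assu:elliptic}, to obtain
\begin{equation*}
\frac12\frac{\de}{\de t}\int_{\R^d}\chi_R^2 u^2\,\de x+\frac{\alpha}{2}\int_{\R^d}\chi_R^2|\nabla u|^2\,\de x\le \frac12\int_{\R^d}\chi_R^2 u^2(\dive\tilde\b)^-\,\de x+C\int_{\R^d}u^2\, g_R\,\de x,
\end{equation*}
where $g_R$ is supported in $\{R\le|x|\le 2R\}$ and dominated, uniformly in $R$, by $C\big(\tfrac{|\b|+\sum_{i,j}|\partial_j a_{ij}|}{1+|x|}+\tfrac{|\a|}{1+|x|^2}\big)\mathbbm 1_{\{R\le|x|\le 2R\}}$. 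The one non-routine point is the cross term $\int\chi_R\,u\,\langle\nabla\chi_R,\a\nabla u\rangle$ coming from the second-order operator: instead of estimating it via Young's inequality against $\int\chi_R^2|\nabla u|^2$ (which would force a quadratic-in-$\a$ growth hypothesis) I would write $u\nabla u=\tfrac12\nabla(u^2)$ and integrate by parts once more, transferring the gradient onto $\chi_R\nabla\chi_R\,\a$; this produces only the contributions $|\nabla\chi_R|^2|\a|$, $\chi_R|D^2\chi_R||\a|$ and $\chi_R|\nabla\chi_R||\dive\a|$, all of the size declared in $g_R$, and here assumption~\ref{assu:dive_a_and_b}, the $W^{1,p}_x$-regularity of $\a$, and $\sfrac1p+\sfrac2q\le 1$ are used to guarantee local integrability of every term.

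Integrating in time, using $(\dive\tilde\b)^-\in L^1_tL^\infty_x$ and Grönwall, the problem reduces to bounding $\int_{\R^d}\chi_R^2u^2(0)$ and $\int_0^T\!\!\int_{\R^d}u^2 g_R$ uniformly in $R$ and to showing that the latter vanishes as $R\to\infty$. Writing each growth coefficient as $h^{(1)}+h^{(2)}$ with $h^{(1)}\in L^1_tL^{q/(q-2)}_x$ and $h^{(2)}\in L^1_tL^\infty_x$: the $h^{(1)}$-part is controlled by Hölder in space by $\int_0^T\|u(t)\|_{L^q}^2\|h^{(1)}(t)\mathbbm 1_{\{|x|\ge R\}}\|_{L^{q/(q-2)}}\,\de t$, which is finite and tends to $0$ by dominated convergence (here $2<q\le\sfrac{2d}{d-2}$ makes $q/(q-2)$ an admissible exponent, consistent with the lower bound $|\a|\ge\alpha$); the $h^{(2)}$-part is $\lesssim\int_0^T\|h^{(2)}(t)\|_{L^\infty}\int_{\{|x|\ge R\}}u^2$. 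Once this is done, letting $R\to\infty$ with monotone convergence yields $\nabla u\in L^2([0,T]\times\R^d)$ and $u\in L^\infty_tL^2_x$, that is $u\in L^2([0,T];H^1(\R^d))$.

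The main obstacle is the $h^{(2)}$-term: the $L^\infty_x$-part of the growth coefficients does not decay at infinity, so its tail is not killed by the $L^q$-bound on $u$ alone, and one is forced to use the $L^2$-integrability of $u$ — which is part of the conclusion rather than a hypothesis. The way around it, in the spirit of~\cite{DPL,LBL_CPDE}, is to read the localized energy inequality as a closed a~priori estimate: absorbing the $h^{(2)}$-term (up to enlarging $\chi_R$ to $\chi_{2R}$) into the Grönwall exponent and using that the trace of $u$ at $t=0$ lies in $L^2$, one obtains the bound uniformly in $R$ by a standard iteration over the cutoff scale, after which the tail contributions vanish in the limit.
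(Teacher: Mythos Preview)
Your overall strategy—a localized energy estimate tested against a cut-off times $u$, with the annulus errors controlled via the growth hypotheses—is exactly the paper's, and your treatment of the cross term $\int\chi_R\, u\,\langle\nabla\chi_R,\a\nabla u\rangle$ by writing $u\nabla u=\tfrac12\nabla(u^2)$ and integrating by parts once more is precisely how the $\partial_j a_{ij}\,\partial_i\varphi_R$ and $a_{ij}\,\partial_{ij}\varphi_R$ terms arise in the proof of Theorem~\ref{thm:regolarita_FP}. The only genuine difference in execution is that you first invoke Theorem~\ref{thm:main1} to obtain $u\in L^2_tH^1_{x,\mathrm{loc}}$ and then work directly with $u$, whereas the paper keeps the spatial mollification $u^\delta$ and carries the commutators $r^\delta,s^\delta$ (uniformly bounded in $L^2_tH^{-1}_x$ by Remark~\ref{rem:global_comm_h-1}) through the estimate; both routes produce the same localized inequality.

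The gap is in your final paragraph. The proposed ``iteration over the cutoff scale'' does not close: dominating $\int_{\{R\le|x|\le 2R\}}u^2$ by $\int\chi_{2R}^2u^2$ only couples the scale $R$ to the scale $2R$, yielding a chain $E_R\lesssim E_{2R}\lesssim E_{4R}\lesssim\cdots$ with no finite anchor at the top end, and your assertion that ``the trace of $u$ at $t=0$ lies in $L^2$'' is not a hypothesis (only $u\in L^\infty_tL^q_x$ with $q>2$ is given). The paper does \emph{not} iterate: it simply treats the $L^\infty$-part of the growth coefficients as contributing an $R$-independent constant to the right-hand side—bounding $\int_0^T\!\int_{R<|x|<2R}|u^\delta|^2\,h^{(2)}$ by $C\|u_0\|_{L^2}^2\|h^{(2)}\|_{L^1_tL^\infty_x}$—so that the final estimate reads $\|\nabla u^\delta\|^2_{L^2L^2(B_R)}\le C\big(1+F(R)+R^{d(q-2)/q-2}\big)$, uniform in $R$ exactly when $q\le\tfrac{2d}{d-2}$. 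Thus the paper's argument also tacitly uses an $L^2_x$ bound on $u$; but once such a bound is granted, no scale iteration is needed, so your detour through iteration is both unjustified as written and unnecessary.
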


We conclude this introduction by observing that the well-posedness landscape of \eqref{eq:fp} in the non-smooth regime is far from being completely understood. Recently, several authors have devoted their attention to the construction of \emph{exotic} non-unique distributional solutions to Fokker-Planck type equations (and also for related PDEs in fluid-dynamics). These counterexamples are mainly provided in the case of constant diffusion matrix -- we refer the reader to our recent survey \cite{BCC2} for a thorough analysis of the problem and for additional references.  
From the mathematical point of view, handling the PDE in the presence of irregular diffusions is even more delicate. At the same time, anisotropic diffusions naturally appear in the physical models and therefore the study of \eqref{eq:fp} in this setting is of paramount importance.

%Leggere Osada, H. (1987). Diffusion processes with generators of generalized divergence form. J. Math. Kyoto Univ. 27(4):597–619.

\subsection{Structure of the paper} In Section 2 we collect the commutator estimates which will then be the heart of all the proofs that follow. In Section 3 we recall the definitions of distributional and parabolic solutions of \eqref{eq:fp} and we prove their existence together with the uniqueness of parabolic solutions. These results are technical improvements of \cite{F08, LBL_CPDE}, providing a unified treatment. Finally, in Section 4 we prove our Regularity Theorem \ref{thm:main1} and Theorem \ref{cor:fig}. Moreover, we discuss the role of the growth conditions, proving Theorem \ref{thm:main2} and Theorem \ref{teo:growth}.

In what follows, we will often omit the summation symbol and we will adopt the convention of summing over repeated indices.

\section{Commutator estimates}
In this section we collect commutator estimates that will play a predominant role in the entire paper. First, we recall some known results on the $L^1$ convergence of the commutator. We provide the proofs for completeness but they can be found in \cite{DPL, LBL_CPDE, LBL_book}. We remark that the convergence of the commutators is the key tool to prove the renormalization property (and hence the uniqueness) of distributional solutions of the linear transport equation, see \cite{DPL}. To fix the notation, let $\rho\in C^\infty_c(\R^d)$ be a smooth convolution kernel such that
\begin{equation*}
    \int_{\R^d}\rho(x)\de x=1,
\end{equation*}
and for any $\delta>0$ define $\rho^\delta$ as 
\begin{equation}\label{def:moll}
    \rho^\delta(x):=\frac{1}{\delta^d}\rho\left(\frac{x}{\delta}\right).
\end{equation}
\subsection{Commutator estimates in $L^1$} We start by proving some classical commutator estimates. The following lemma holds.
\begin{lem}\label{lem:conv_comm}
Let $\b \in L^2((0,T); L^p_\mathrm{loc}(\R^d;\R^d))$ be a vector field and let $w \in L^\infty((0,T); L^q_\mathrm{loc}(\R^d))$, where $p,q$ are positive real numbers with $\sfrac{1}{p}+\sfrac{1}{q}\leq 1$. Let $(\rho^\delta)_\delta$ be a family of smooth convolution kernels. Define the \emph{commutator of $w$ and $\b$} as follows:
\begin{equation}\label{eq:comm_def}
r^\delta:=\dive(\b w^\delta)-\dive(\b w)*\rho^\delta=r^\delta_1+r^\delta_2,
\end{equation}
where
\begin{equation}\label{eq:comm_def1}
r^\delta_1:=\b\cdot\nabla (w*\rho^\delta)-\left(\b\cdot\nabla w\right)*\rho^\delta,\qquad
r^\delta_2:=w*\rho^\delta\dive\b-(w\dive\b)*\rho^\delta.
\end{equation}
Then we have the following:
\begin{itemize}
\item if $\nabla w\in L^2((0,T);L^q_\mathrm{loc}(\R^d;\R^d))$, then $r_1^\delta$ converges to $0$ in $L^1((0,T);L^1_\mathrm{loc}(\R^d))$;
\item if $\dive \b \in L^1((0,T);L^\infty(\R^d))$, then $r_2^\delta$ converges to $0$ in $L^1((0,T); L^q_\mathrm{loc}(\R^d))$.
\end{itemize}
\end{lem}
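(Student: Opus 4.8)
The plan is to follow the classical DiPerna–Lions scheme adapted to the two pieces $r_1^\delta$ and $r_2^\delta$ separately, since the hypotheses on them are different. Throughout I fix an arbitrary relatively compact open set $\Omega \Subset \R^d$ and a slightly larger one $\Omega' \Subset \R^d$ with $\Omega \Subset \Omega'$, so that for $\delta$ small all convolutions $w * \rho^\delta$ and $\b * \rho^\delta$ restricted to $\Omega$ only see values of $w,\b$ on $\Omega'$; all local norms below are taken on such sets, and the time integrability is handled by dominated convergence after establishing pointwise-in-$t$ convergence together with an $L^1_t$ domination.

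For the term $r_2^\delta = (w*\rho^\delta)\,\dive\b - (w\,\dive\b)*\rho^\delta$, I would argue directly: writing $f := \dive\b$, which by assumption lies in $L^1_t L^\infty_x$, and using that $w \in L^\infty_t L^q_{x,\mathrm{loc}}$, I decompose
\begin{equation*}
r_2^\delta = f\,(w*\rho^\delta - w) + \big(f\,w - (f\,w)*\rho^\delta\big) + \big((f\,w)*\rho^\delta - (f*\rho^\delta)\,w\big) \ \text{(careful bookkeeping)},
\end{equation*}
but more cleanly one just writes $r_2^\delta = f\,(w*\rho^\delta - w) - \big((f\,w)*\rho^\delta - f\,w\big)$. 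The first summand is controlled in $L^q_{x,\mathrm{loc}}$ by $\|f(t,\cdot)\|_{L^\infty}\,\|w*\rho^\delta - w\|_{L^q(\Omega)}$, which tends to $0$ for a.e.\ $t$ by continuity of translations in $L^q_{\mathrm{loc}}$ (using $q \ge 1$; the case $q=\infty$ needs the usual density/truncation workaround but is not needed here since $\sfrac1p+\sfrac1q\le 1$ already allows $q<\infty$ unless $p=\infty$, and if $q=\infty$ one replaces strong $L^\infty$ convergence by strong $L^{q'}_{\mathrm{loc}}$ convergence for finite $q'$ and pairs against the $L^1_t L^\infty_x$ factor accordingly), and is dominated by $2\|f(t,\cdot)\|_{L^\infty}\|w\|_{L^\infty_t L^q(\Omega')}$, which is in $L^1_t$. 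The second summand is $fw - (fw)*\rho^\delta$ with $fw \in L^1_t L^q_{x,\mathrm{loc}}$, hence converges to $0$ in $L^q_{x,\mathrm{loc}}$ for a.e.\ $t$ and is $L^1_t$-dominated; dominated convergence then gives $r_2^\delta \to 0$ in $L^1_t L^q_{x,\mathrm{loc}}$.

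For $r_1^\delta = \b\cdot\nabla(w*\rho^\delta) - (\b\cdot\nabla w)*\rho^\delta$, I use the standard rewriting
\begin{equation*}
r_1^\delta(t,x) = \int_{\R^d} \nabla w(t,x-y)\cdot\big(\b(t,x-y)-\b(t,x)\big)\,\rho^\delta(y)\,\de y \;-\; (w*\rho^\delta)(t,x)\,\dive\b(t,x)\ \text{(plus the divergence term already in }r_2^\delta),
\end{equation*}
so that, after the cancellation with the $\dive\b$ contribution that is accounted for in $r_2^\delta$, the genuinely new object is the integral term, whose $L^1(\Omega)$ norm is bounded via Fubini by
\begin{equation*}
\int_{B_\delta} \rho^\delta(y)\,\|\nabla w(t,\cdot - y)\|_{L^q(\Omega)}\,\|\b(t,\cdot-y)-\b(t,\cdot)\|_{L^p(\Omega)}\,\de y \le \|\nabla w\|_{L^\infty_t L^q(\Omega')} \sup_{|y|\le\delta}\|\tau_y\b(t,\cdot)-\b(t,\cdot)\|_{L^p(\Omega')}.
\end{equation*}
Since $\b(t,\cdot)\in L^p_{\mathrm{loc}}$ for a.e.\ $t$, the translation modulus $\sup_{|y|\le\delta}\|\tau_y\b(t,\cdot)-\b(t,\cdot)\|_{L^p(\Omega')}\to 0$ as $\delta\to 0$ for a.e.\ $t$, and it is bounded by $2\|\b(t,\cdot)\|_{L^p(\Omega'')}$ which is in $L^2_t\subset L^1_t$; multiplying by the fixed constant $\|\nabla w\|_{L^\infty_t L^q(\Omega')}$ and invoking dominated convergence yields $r_1^\delta\to 0$ in $L^1_t L^1_{x,\mathrm{loc}}$. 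I expect the main obstacle — really the only delicate point — to be the careful splitting of the $\dive\b$ contributions between $r_1^\delta$ and $r_2^\delta$ so that the exact quantities in the statement appear and no spurious non-vanishing term is left over; once the algebraic identity is pinned down, everything reduces to continuity of translations in $L^p_{\mathrm{loc}}$ (resp.\ $L^q_{\mathrm{loc}}$) plus an $L^1_t$ dominated convergence, all under the single hypothesis $\sfrac1p+\sfrac1q\le 1$ which makes the Hölder pairings licit.
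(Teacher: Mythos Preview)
Your approach is essentially the same as the paper's --- rewrite $r_1^\delta$ as a difference integral against $\nabla w$, use continuity of translations for $\b$ in $L^p_{\mathrm{loc}}$, and apply dominated convergence; for $r_2^\delta$, observe both summands converge strongly. Two issues, however, deserve correction.

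First, your ``standard rewriting'' of $r_1^\delta$ is muddled. Since $\nabla w$ is assumed to exist, one has \emph{directly}
\[
r_1^\delta(t,x) = \int_{\R^d} \big(\b(t,x) - \b(t,x-y)\big)\cdot \nabla w(t,x-y)\,\rho^\delta(y)\,\de y,
\]
with no $\dive\b$ term appearing; there is nothing to cancel against $r_2^\delta$. Your parenthetical remarks about ``plus the divergence term already in $r_2^\delta$'' and ``after the cancellation'' suggest you are conflating this with the classical DiPerna--Lions form (where one avoids differentiating $w$ and instead integrates by parts onto $\rho^\delta$, which does produce a $\dive\b$ correction). Here that detour is unnecessary and the displayed expression you wrote is not an identity for $r_1^\delta$.

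Second --- and this is a genuine slip --- your domination in the $r_1^\delta$ estimate invokes the ``fixed constant'' $\|\nabla w\|_{L^\infty_t L^q(\Omega')}$, but the hypothesis only gives $\nabla w \in L^2([0,T];L^q_{\mathrm{loc}})$, so this quantity may be infinite. The fix is immediate: keep the $t$-dependent bound $\|\nabla w(t,\cdot)\|_{L^q(\Omega')}\cdot 2\|\b(t,\cdot)\|_{L^p(\Omega'')}$ and observe it lies in $L^1_t$ by Cauchy--Schwarz in time, since both factors are $L^2_t$. This is precisely how the paper proceeds: it bounds $\iint_{[0,T]\times Q}|r_1^\delta|$ directly and combines $\b \in L^2_t L^p_{x,\mathrm{loc}}$, $\nabla w \in L^2_t L^q_{x,\mathrm{loc}}$ (with $\sfrac1p+\sfrac1q\le 1$) and convergence in measure of $(t,x)\mapsto \b(t,x)-\b(t,x-\delta z)$ to invoke dominated convergence.
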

\begin{proof}
We start by showing the convergence of $r^\delta_1$. Observe that, for a.e. $t \in (0,T)$ and a.e. $x \in \R^d$, we can explicitly write the commutator $r^\delta_1$ in the following form:
	\begin{align*}
	r^\delta_1(t,x)& =[\b\cdot\nabla (w*\rho^\delta)](t,x)-[(\b\cdot\nabla w)*\rho^\delta](t,x)\\
	&=\b(t,x)\cdot\nabla \int_{\R^d}w(t,x-y)\rho^\delta(y)\de y-\int_{\R^d} \b(t,x-y)\cdot \nabla w(t,x-y) \rho^\delta(y)\de y \\
	&=\int_{\R^d}\rho^\delta(y) \left(\b(t,x)-\b(t,x-y)\right)\cdot\nabla w(t,x-y) \de y\\
	&=\int_{\R^d}\rho(z)\left(\b(t,x)-\b(t,x-\delta z)\right)\cdot\nabla w(t,x-\delta z) \de z.
	\end{align*}
Let $Q$ be any bounded subset of $\R^d$, we have that
	\begin{align}
	\iint_{(0,T) \times Q}|r^\delta_1(t,x)|\de t \de x  &=\iint_{(0,T) \times Q}\left| \int_{\R^d}\rho(z)\left(\b(t,x)-\b(t,x-\delta z)\right)\cdot\nabla w(t,x-\delta z) \de z\right|\de t \de x  \nonumber\\
	&\leq \int_{B_1}  \rho(z)\int_0^T \int_{Q}|\b(t,x)-\b(t,x-\delta z)||\nabla w(t,x-\delta z)|\de x \de t \de z.\label{eq:a}
	\end{align}
Since $(t,x) \mapsto \b(t,x)-\b(t,x-\delta z)$ converges to $0$ in measure (for every fixed $z$), the conclusion follows by the Dominated Convergence Theorem. \\
Now we consider the commutator $r_2^\delta$: observe that $w\ast \rho^\delta \to w$ strongly in $L^\infty L^q_\mathrm{loc}$ and similarly $(w\dive \b) \ast \rho^\delta \to w \dive \b$ strongly in $L^\infty L^q_\mathrm{loc}$ and this concludes the proof.
\end{proof}

\begin{rem}\label{rem:div-commutatore1}
It is worth observing that no assumption on the divergence of $\b$ is necessary for the convergence of $r_1^\delta$.
\end{rem}

\begin{lem}\label{lem:conv_comm3}
	Let $w \in L^2((0,T); W^{1,q}_\mathrm{loc}(\R^d))$ and let $\a\in L^2((0,T); W^{1,p}_\mathrm{loc}(\R^d;\R^{d\times d}))$ be a matrix-valued function, where $p,q$ are positive real numbers with $\sfrac{1}{p}+\sfrac{1}{q}\leq 1$. Let $(\rho^\delta)_\delta$ be a family of smooth convolution kernels defined as in \eqref{def:moll}. Define the \emph{commutator of $w$ and $\a$} as follows: 
    \begin{equation}\label{eq:comm2_def}
	s^\delta:=\sum_{i,j}[\partial_{ij}(a_{ij}w)]*\rho_\delta-\partial_{ij}[a_{ij}(w*\rho_\delta)].
    \end{equation}
    Then $s^\delta$ converges to $0$ in $L^1((0,T);L^1_\mathrm{loc}(\R^d))$. 
\end{lem}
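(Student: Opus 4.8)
The strategy is to expand the second-order commutator $s^\delta$ into pieces analogous to the first-order situation treated in Lemma \ref{lem:conv_comm}, and to reduce each piece to a term that can be handled either by the Dominated Convergence Theorem (in the spirit of the estimate \eqref{eq:a}) or by the strong convergence of mollifications. First I would write, for fixed indices $i,j$,
\begin{align*}
\partial_{ij}(a_{ij}w) &= \partial_i\big(a_{ij}\partial_j w\big) + \partial_i\big((\partial_j a_{ij})w\big) \\
&= a_{ij}\partial_{ij}w + (\partial_i a_{ij})(\partial_j w) + (\partial_j a_{ij})(\partial_i w) + (\partial_{ij}a_{ij})w,
\end{align*}
but since only one spatial derivative of $\a$ is available, the cleaner route is to keep the equation in the divergence form $\partial_{ij}(a_{ij}w)=\partial_i\big(a_{ij}\partial_j w + (\partial_j a_{ij})w\big)$ and mollify. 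Then $s^\delta$ becomes a sum over $i$ of one-derivative commutators of the form $\partial_i\big(c^\delta_i\big)$, where $c^\delta_i = \big[(a_{ij}\partial_j w + (\partial_j a_{ij})w)\big]*\rho^\delta - \big(a_{ij}(\partial_j w*\rho^\delta) + (\partial_j a_{ij})(w*\rho^\delta)\big)$. After commuting the outer $\partial_i$ with the convolution (which is exact), it suffices to prove that each $c^\delta_i \to 0$ in $L^1_tL^1_{x,\mathrm{loc}}$ and that its distributional $x$-derivative also stays controlled — but since we only claim convergence of $s^\delta$ in $L^1_tL^1_{x,\mathrm{loc}}$, the honest formulation is to expand $s^\delta$ fully so that no outer derivative remains.

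Concretely, I would expand $\partial_{ij}[a_{ij}(w*\rho^\delta)] = \partial_i\big[a_{ij}\partial_j(w*\rho^\delta)\big] + \partial_i\big[(\partial_j a_{ij})(w*\rho^\delta)\big]$ and match it term by term against the mollification of $\partial_{ij}(a_{ij}w) = \partial_i\big[a_{ij}\partial_j w\big] + \partial_i\big[(\partial_j a_{ij})w\big]$. The key observation is that $\partial_i\big[a_{ij}(\partial_j w)*\rho^\delta\big] - \big(\partial_i[a_{ij}\partial_j w]\big)*\rho^\delta$ is exactly a first-order commutator of the type $r_1^\delta$ from Lemma \ref{lem:conv_comm} with the ``vector field'' $a_{ij}$ (column $i$) and the ``function'' $\partial_j w$: since $\a \in L^2_tW^{1,p}_{x,\mathrm{loc}}$ and $\partial_j w \in L^2_tL^q_{x,\mathrm{loc}}$ with $1/p+1/q \le 1$, and since the divergence-type term $(\partial_i a_{ij})(\partial_j w)*\rho^\delta - ((\partial_i a_{ij})\partial_j w)*\rho^\delta$ converges by strong convergence of mollification plus the product being in $L^1_tL^1_{x,\mathrm{loc}}$ (as $\partial_i a_{ij}\in L^2_tL^p_{x,\mathrm{loc}}$, $\partial_j w\in L^2_tL^q_{x,\mathrm{loc}}$), this whole block goes to zero in $L^1_tL^1_{x,\mathrm{loc}}$. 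The remaining block, $\partial_i\big[(\partial_j a_{ij})(w*\rho^\delta)\big] - \big(\partial_i[(\partial_j a_{ij})w]\big)*\rho^\delta$, is again of the $r_1^\delta + r_2^\delta$ shape with ``vector field'' $\partial_j a_{ij}$ and ``function'' $w$ — now using $\nabla w \in L^2_tL^q_{x,\mathrm{loc}}$ for the $r_1$-part and strong mollification convergence for the $r_2$-part — so it too vanishes. Summing over $i,j$ gives $s^\delta \to 0$ in $L^1_tL^1_{x,\mathrm{loc}}$.

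\textbf{Main obstacle.} The delicate point is the bookkeeping when $\a$ carries only one weak derivative: the naive expansion $a_{ij}\partial_{ij}w$ is not licit because we cannot put two derivatives on $w$ and none on $\a$ symmetrically, so I must be careful to always keep the expression in a form where at most one derivative hits each of $\a$ and $w$ (using $\partial_j w \in L^2_tW^{1,q}_{x,\mathrm{loc}}$ would require more than stated — in fact $w \in L^2_tW^{1,q}_{x,\mathrm{loc}}$ only gives $\partial_j w \in L^2_tL^q_{x,\mathrm{loc}}$). Thus the genuine first-order commutators of type $r_1^\delta$ that appear are $(a_{ij})\cdot\nabla(\partial_j w * \rho^\delta) - ((a_{ij})\cdot\nabla\partial_j w)*\rho^\delta$-type objects, and I must check that the proof of Lemma \ref{lem:conv_comm} really only used $\nabla(\text{the function})\in L^2_tL^q_{x,\mathrm{loc}}$ and the vector field in $L^2_tL^p_{x,\mathrm{loc}}$ — which it does, via the Dominated Convergence argument at \eqref{eq:a} — so no extra regularity on $\nabla w$ is secretly needed. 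A second, minor subtlety is that the outer derivative $\partial_i$ acting on a product like $(\partial_j a_{ij})(w*\rho^\delta)$ must be interpreted distributionally and matched against the mollification of the distributional object $\partial_i[(\partial_j a_{ij})w]$; one verifies the identity $\partial_i(f*\rho^\delta) = (\partial_i f)*\rho^\delta$ holds in the distributional sense for $f\in L^1_{\mathrm{loc}}$, which makes the rearrangement rigorous. Once these identifications are in place, every resulting term is covered by Lemma \ref{lem:conv_comm} or by elementary strong convergence of mollifiers, and the conclusion follows.
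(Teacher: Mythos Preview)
There is a genuine gap. Your reduction of both blocks to Lemma~\ref{lem:conv_comm} does not go through, because in each block one of the two pieces secretly needs a \emph{second} derivative that is not assumed.

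For the first block $s_1^\delta$ (``vector field'' $(a_{ij})_i$, ``function'' $\partial_j w$): the $r_1^\delta$--part of Lemma~\ref{lem:conv_comm} requires the gradient of the \emph{function} argument, which here is $\nabla(\partial_j w)$. You write out exactly this object, $(a_{ij})\cdot\nabla(\partial_j w*\rho^\delta)-((a_{ij})\cdot\nabla\partial_j w)*\rho^\delta$, and then assert ``no extra regularity on $\nabla w$ is secretly needed'' --- but $\nabla(\text{the function})$ is $\nabla\partial_j w$, not $\nabla w$, and this \emph{is} the missing second derivative. (This block can in fact be rescued by the classical DiPerna--Lions commutator lemma, which puts the Sobolev regularity on the vector field $a_{ij}\in W^{1,p}$ rather than on the function; but that is not the statement of Lemma~\ref{lem:conv_comm}, and you do not invoke it.)

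For the second block $s_2^\delta$ (``vector field'' $(\partial_j a_{ij})_i$, ``function'' $w$): the $r_1^\delta$--part is fine using $\nabla w\in L^2_tL^q_{x,\mathrm{loc}}$, but the $r_2^\delta$--part is
\[
w^\delta\,\dive\big((\partial_j a_{ij})_i\big)-\big(w\,\dive((\partial_j a_{ij})_i)\big)*\rho^\delta
= w^\delta\,\textstyle\sum_{i,j}\partial_{ij}a_{ij}-\big(w\,\sum_{i,j}\partial_{ij}a_{ij}\big)*\rho^\delta,
\]
and $\sum_{i,j}\partial_{ij}a_{ij}$ is only a distribution under the hypothesis $\a\in L^2_tW^{1,p}_{x,\mathrm{loc}}$. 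Your appeal to ``strong mollification convergence'' is empty here, since $w\,\partial_{ij}a_{ij}$ is not an $L^1_{\mathrm{loc}}$ object to begin with. Neither version of the first--order commutator lemma (gradient on the function, or gradient on the vector field) covers this piece, because both require one more derivative on $\a$ than is available.

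The paper's proof avoids this obstruction by working directly with the explicit integral representation and performing an integration by parts \emph{in the convolution variable} $z$: starting from the expression involving the difference quotient $\frac{\partial_j a_{ij}(x-\delta z)-\partial_j a_{ij}(x)}{\delta}$, one moves $\partial_j$ onto $w(\,\cdot\,-\delta z)$ and onto $\partial_i\rho(z)$, producing terms $I^\delta,II^\delta$ that involve only $\nabla\a$, $\nabla w$, and derivatives of $\rho$. The pointwise limits of $s_1^\delta$ and $I^\delta$ then \emph{cancel} (they are $\mp\sum_{i,j}\partial_i a_{ij}\,\partial_j w$), while $II^\delta\to 0$; dominated convergence finishes the job. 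This IBP--in--$z$ step and the resulting cancellation are the missing ingredients in your plan.
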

\begin{proof}
We can rewrite the commutator $s^\delta$ as follows
\begin{align*}
s^\delta&=\sum_{i,j}[\partial_{ij}(a_{ij}w)]*\rho_\delta-\partial_{ij}[a_{ij}(w*\rho_\delta)]\\
&=\sum_{i,j}[\partial_{i}(a_{ij}\partial_jw)]*\rho_\delta-\partial_{i}[a_{ij}\partial_j(w*\rho_\delta)]+\sum_{i,j}[\partial_{i}(\partial_ja_{ij}w)]*\rho_\delta-\partial_{i}[\partial_ja_{ij}(w*\rho_\delta)]\\
&:=s_1^\delta+s_2^\delta.
\end{align*}
We start by analyzing $s_1^\delta$: for a.e. $(t,x)\in (0,T)\times \R^d$ we can write
\begin{align*}
s_1^\delta(t,x)&=\sum_{i,j}[\partial_{i}(a_{ij}\partial_j w)]*\rho_\delta(t,x)-\partial_{i}[a_{ij}\partial_j(w*\rho_\delta)](t,x)\\
&=\sum_{i,j}\int_{\R^d} \partial_{i}[a_{ij}(t,x-y)\partial_jw(t,x-y)]\rho_\delta(y)\de y-\partial_{i} \left(a_{ij}(t,x)\partial_j\int_{\R^d}w(t,x-y)\rho_\delta(y)\de y\right)\\
&=\sum_{i,j}\int_{\R^d} \partial_{i}[(a_{ij}(t,x-y)-a_{ij}(t,x))\partial_j w(t,x-y)]\rho_\delta(y)\de y\\
&=-\sum_{i,j}\int_{\R^d}\frac{a_{ij}(t,x-\delta z)-a_{ij}(t,x)}{\delta}\partial_j w(t,x-\delta z)\partial_i\rho(z)\de z\\
&=\sum_{i,j}\int_{\R^d}\int_0^1\partial_j w(t,x-\delta z)\nabla a_{ij}(t,x-s\delta z)\cdot z\partial_i\rho(z)\de s\de z,
\end{align*}
where in the fourth line we changed variables and integrated by parts. Then, for a.e. $(t,x)$ the commutator $s_1^\delta$ converges to
\begin{align}
\lim_{\delta\to 0}s_1^\delta(t,x)&=\sum_{i,j}\partial_jw(t,x)\nabla a_{ij}(t,x)\cdot\int_{\R^d} z\partial_i\rho(z)\de z\nonumber\\
&=-\sum_{i,j}\partial_jw(t,x)\partial_i a_{ij}(t,x)\int_{\R^d} \rho(z)\de z.\label{comm:s1}
\end{align}
We now focus on $s_2^\delta$: we have that
\begin{align}
s_2^\delta(t,x)&=\sum_{i,j}[\partial_{i}(\partial_ja_{ij}w)]*\rho_\delta(t,x)-\partial_{i}[\partial_ja_{ij}(w*\rho_\delta)](t,x)\nonumber\\
=&\sum_{i,j}\int_{\R^d} \partial_{i}[\partial_ja_{ij}(t,x-y)w(t,x-y)]\rho_\delta(y)\de y-\partial_{i} \left(\partial_ja_{ij}(t,x)\int_{\R^d}w(t,x-y)\rho_\delta(y)\de y\right)\nonumber\\
=&\sum_{i,j}\int_{\R^d} \partial_{i}[\partial_j(a_{ij}(t,x-y)-a_{ij}(t,x)) w(t,x-y)]\rho_\delta(y)\de y\nonumber\\
=&-\sum_{i,j}\int_{\R^d}\frac{\partial_ja_{ij}(t,x-\delta z)-\partial_ja_{ij}(t,x)}{\delta} w(t,x-\delta z)\partial_i\rho(z)\de z\nonumber\\
=&\sum_{i,j}\int_{\R^d}\frac{a_{ij}(t,x-\delta z)-a_{ij}(t,x)}{\delta} \partial_jw(t,x-\delta z)\partial_i\rho(z)\de z\nonumber\\
&+\sum_{i,j}\int_{\R^d}\frac{a_{ij}(t,x-\delta z)-a_{ij}(t,x)}{\delta} w(t,x-\delta z)\partial_{ij}\rho(z)\de z\nonumber\\
=& \underbrace{-\sum_{i,j}\int_{\R^d}\int_0^1\partial_j w(t,x-\delta z)\nabla a_{ij}(t,x-s\delta z)\cdot z\partial_i\rho(z)\de s\de z}_{I^\delta}\nonumber\\
&\underbrace{-\sum_{i,j}\int_{\R^d}\int_0^1 w(t,x-\delta z)\nabla a_{ij}(t,x-s\delta z)\cdot z\partial_{ij}\rho(z)\de s\de z}_{II^\delta}.\nonumber
\end{align}
Thus, one can check the following a.e. convergences
\begin{equation}
\lim_{\delta\to 0}I^\delta(t,x)=\sum_{i,j}\partial_jw(t,x)\partial_i a_{ij}(t,x)\int_{\R^d} \rho(z)\de z,\label{comm:s2}
\end{equation}
and
\begin{equation}
\lim_{\delta\to 0}II^\delta(t,x)=-w(t,x)\sum_{i,j} \nabla a_{ij}(t,x)\cdot \int_{\R^d}z\partial_{ij}\rho(z)\de s\de z=0.\label{comm:s3}
\end{equation}
We have thus shown that $s^\delta\to 0$ for a.e. $(t,x)\in(0,T)\times\R^d$. The convergence in $L^1((0,T);L^1_\mathrm{loc}(\R^d))$ follows from the dominated convergence theorem.
\end{proof}

\subsection{Commutator estimates on negative Sobolev norms}
We now show some new commutator estimates which will allow us to prove the regularity of a distributional solution within an appropriate integrability range.
\begin{lem}\label{lem:conv_comm2}
Let $\b \in L^2((0,T); L^p_\mathrm{loc}(\R^d; \R^d))$ be a vector field and let $w \in L^\infty((0,T); L^q_\mathrm{loc}(\R^d))$, where $p,q$ are positive real numbers with $\sfrac{1}{p}+\sfrac{1}{q}\leq \sfrac{1}{2}$. Let $(\rho^\delta)_\delta$ be a family of smooth convolutions kernels and define $r^\delta$ as in \eqref{eq:comm_def}. Then $r^\delta$ converges to $0$ in $L^2((0,T); H^{-1}_\mathrm{loc}(\R^d))$.
\end{lem}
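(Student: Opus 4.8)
The plan is to exploit the fact that $r^\delta$ is, by its very definition, a spatial divergence. A direct computation with the mollifier (which uses nothing about $\dive\b$) shows that, for a.e.\ $t$ and every $\psi\in C^\infty_c(\R^d)$,
\[
\langle r^\delta(t,\cdot),\psi\rangle=-\int_{\R^d}R^\delta(t,\cdot)\cdot\nabla\psi\,\de x,\qquad\text{where}\quad R^\delta:=\b\,w^\delta-(\b w)\ast\rho^\delta,
\]
that is, $r^\delta=\dive R^\delta$. I would then show that the stronger integrability $\sfrac{1}{p}+\sfrac{1}{q}\le\sfrac{1}{2}$ forces $R^\delta\to0$ \emph{strongly} in $L^2([0,T];L^2_\mathrm{loc}(\R^d))$; since $\dive$ maps $L^2(\R^d;\R^d)$ continuously into $H^{-1}(\R^d)$, a cutoff then converts this into the asserted convergence of $r^\delta$. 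Note that $\sfrac{1}{p}+\sfrac{1}{q}\le\sfrac{1}{2}$, with $p,q$ finite, forces $p,q\in(2,\infty)$; set $r:=(\sfrac{1}{p}+\sfrac{1}{q})^{-1}\in[2,\infty)$, so that by Hölder's inequality $\b w$, $\b w^\delta$ and $(\b w)\ast\rho^\delta$ all lie in $L^2([0,T];L^r_\mathrm{loc}(\R^d))$, and hence so does $R^\delta$.

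The core step is the strong convergence $R^\delta\to0$ in $L^2([0,T];L^r(Q))$ for every bounded open $Q\subset\R^d$. Fix such a $Q$ and a bounded open $Q'$ containing the $1$-neighbourhood of $Q$, and split
\[
R^\delta=\underbrace{\b\,(w^\delta-w)}_{A^\delta}\;+\;\underbrace{\bigl(\b w-(\b w)\ast\rho^\delta\bigr)}_{B^\delta}.
\]
The term $B^\delta$ is a mollification error: for a.e.\ $t$, $(\b w)(t,\cdot)\ast\rho^\delta\to(\b w)(t,\cdot)$ in $L^r(Q)$ as $\delta\to0$ (here $r<\infty$), with $\|B^\delta(t,\cdot)\|_{L^r(Q)}\le2\|(\b w)(t,\cdot)\|_{L^r(Q')}\in L^2_t$, so the dominated convergence theorem gives $B^\delta\to0$ in $L^2_tL^r_x(Q)$. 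For $A^\delta$, Hölder's inequality gives $\|A^\delta(t,\cdot)\|_{L^r(Q)}\le\|\b(t,\cdot)\|_{L^p(Q)}\,\|(w^\delta-w)(t,\cdot)\|_{L^q(Q)}$; since $q<\infty$ the last factor tends to $0$ for a.e.\ $t$, while the whole integrand $\|\b(t,\cdot)\|_{L^p(Q)}^2\,\|(w^\delta-w)(t,\cdot)\|_{L^q(Q)}^2$ is dominated by $4\|w\|_{L^\infty_tL^q(Q')}^2\,\|\b(t,\cdot)\|_{L^p(Q)}^2\in L^1_t$, so dominated convergence again gives $A^\delta\to0$ in $L^2_tL^r_x(Q)$. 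Hence $R^\delta\to0$ in $L^2_tL^r_x(Q)$ and, since $r\ge2$ and $Q$ is bounded, also in $L^2([0,T];L^2(Q))$.

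To conclude, I would fix $\varphi\in C^\infty_c(\R^d)$ and write $\varphi\,r^\delta=\varphi\,\dive R^\delta=\dive(\varphi R^\delta)-\nabla\varphi\cdot R^\delta$ as distributions in $x$. With $Q:=\operatorname{supp}\varphi$, both $\|\varphi R^\delta\|_{L^2_tL^2_x(\R^d)}$ and $\|\nabla\varphi\cdot R^\delta\|_{L^2_tL^2_x(\R^d)}$ are bounded by $C\|R^\delta\|_{L^2_tL^2_x(Q)}\to0$; using the continuous embeddings $\dive\colon L^2(\R^d;\R^d)\to H^{-1}(\R^d)$ and $L^2(\R^d)\hookrightarrow H^{-1}(\R^d)$, this yields $\varphi\,r^\delta\to0$ in $L^2([0,T];H^{-1}(\R^d))$, and since $\varphi$ is arbitrary, $r^\delta\to0$ in $L^2([0,T];H^{-1}_\mathrm{loc}(\R^d))$. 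I do not expect a single decisive obstacle: the statement refines the classical $L^1$ commutator estimate of Lemma~\ref{lem:conv_comm}, and the one new ingredient is that the relation $\sfrac{1}{p}+\sfrac{1}{q}\le\sfrac{1}{2}$ (rather than the classical $\le1$) places $R^\delta$ in $L^2$ in space, so that its divergence gains a derivative and falls into $H^{-1}$. If I had to name the crux, it is the control of $A^\delta$, where only $L^2_t$ integrability of $\b$ is available: this is precisely where the $L^\infty_t$ bound on $w$ is used. The remaining points requiring a little care are the justification of $r^\delta=\dive R^\delta$ with no assumption on $\dive\b$, and the cutoff, which contributes only the harmless remainder $\nabla\varphi\cdot R^\delta$.
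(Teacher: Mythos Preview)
Your proof is correct and rests on the same key observation as the paper's: the commutator is a spatial divergence, $r^\delta=\dive R^\delta$ with $R^\delta=\b\,w^\delta-(\b w)\ast\rho^\delta$, and the integrability condition $\sfrac{1}{p}+\sfrac{1}{q}\le\sfrac{1}{2}$ places $R^\delta$ in $L^2_tL^2_{x,\mathrm{loc}}$, whence $\dive$ lands in $H^{-1}_\mathrm{loc}$.

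The difference lies in how $R^\delta\to0$ is obtained. The paper works directly with the integral representation $R^\delta(t,x)=\int[\b(t,x)-\b(t,x-y)]w(t,x-y)\rho^\delta(y)\,\de y$, pairs against $\nabla\varphi$ with $\varphi\in L^2_tH^1_0(Q)$, and then uses continuity of translation of $\b$ in $L^2_tL^p_{\mathrm{loc}}$ together with H\"older $(p,q,2)$--$(2,\infty,2)$ and dominated convergence in the auxiliary variable $z$. You instead add and subtract $\b w$, splitting $R^\delta=A^\delta+B^\delta$, and use standard mollifier convergence of $w$ in $L^q_\mathrm{loc}$ and of $\b w$ in $L^r_\mathrm{loc}$ to show $R^\delta\to0$ in $L^2_tL^r_\mathrm{loc}\hookrightarrow L^2_tL^2_\mathrm{loc}$, then convert to $H^{-1}_\mathrm{loc}$ via a cutoff. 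Your route is arguably more elementary (no integral representation, no ``convergence in measure'' step), while the paper's stays closer to the DiPerna--Lions tradition and estimates the $H^{-1}$ norm in one stroke without the separate cutoff manipulation. Both are short and neither offers a real advantage in generality here.
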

\begin{proof}
	We recall that the commutator $r^\delta$ is given by
	\begin{equation*}
		\begin{split}
			r^\delta &  =\dive [\b (w*\rho^\delta)]-[\dive (\b w)*\rho^\delta] = \dive [\b (w*\rho^\delta) - (\b w)*\rho^\delta], 
			% & = \dive [\b (u*\rho^\delta)]- \dive[ (\b u)*\rho^\delta]\\
		\end{split}
	\end{equation*}
	in the sense of distributions on $(0,T) \times \R^d$. We can thus write 
	\begin{equation*}
		r^\delta(t,x) =  \dive_x \left( \int_{\R^d} [\b(t,x) - \b(t,x-y)] w(t,x-y)\rho^\delta(y)\de y  \right)
	\end{equation*}
	and, for a given bounded subset $Q$ of $\R^d$ we can estimate  
	\begin{align*}
		\|r^\delta &\|_{L^2(H^{-1}(Q))}=\sup_{\|\varphi\|_{L^2 H^1_0(Q)}\leq 1}\left|\iint_{(0,T) \times Q}r^\delta(t,x)\varphi(t,x)\de t\de x\right|\\ 
			& = \sup_{\|\varphi\|_{L^2 H^1_0(Q)}\leq 1} \left \vert \iint_{(0,T) \times Q} \left( \int_{\R^d} [\b(t,x) - \b(t,x-y)] w(t,x-y)\rho^\delta(y)\de y  \right)\cdot \nabla \varphi(t,x) \de t\de x \right \vert \\
			& \le \sup_{\|\varphi\|_{L^2 H^1_0(Q)}\leq 1} \int_{B_1} \rho(z) \int_0^T \int_{Q}|\b(t,x)-\b(t,x-\delta z)||w(t,x-\delta z)| |\nabla\varphi(t,x)|\de x \de t \de z.  % \\
			%	[\text{H\"older with $p,q,2$}]& \leq \|u \|_{L^\infty L^q}\int_{\T^d}\|\b(t,\cdot)-\b_{-\e z}(t,\cdot)\|_{L^p}\rho(z)\de z.
	\end{align*}
	Notice now that, as in the proof of Lemma \ref{lem:conv_comm}, the map $(t,x) \mapsto \b(t,x)-\b(t,x-\delta z)$ converges to $0$ in measure (for every fixed $z$). H\"older inequality on the product space $(0,T) \times Q$ with exponents $(p,q,2)$ (in space) and $(2,\infty, 2)$ (in time) allows us to apply Lebesgue Dominated Convergence Theorem and we can therefore conclude that $r^\delta \to 0$ in $L^2(H^{-1}_\mathrm{loc})$. 
\end{proof}

\begin{rem}\label{rem:global_comm_h-1}
    Notice that with global assumptions on $\b$ and $w$, in the case $1/p+1/q=1/2$ the commutator $r^\delta$ converges to $0$ in $L^2((0,T);H^{-1}(\R^d))$.
\end{rem}

We now prove the analogous result on the diffusion matrix.

\begin{lem}\label{lem:conv_comm4}
Let $\a\in L^2((0,T); W^{1,p}_\mathrm{loc}(\R^d;\R^{d\times d}))$ be a matrix valued function  and let $w \in L^\infty((0,T); L^q_\mathrm{loc}(\R^d))$ with $p,q$ are positive real numbers with $\sfrac{1}{p}+\sfrac{1}{q}\leq \sfrac{1}{2}$. Let $(\rho^\delta)_\delta$ be a family of smooth convolutions kernels and define $s^\delta$ as in \eqref{eq:comm2_def}. Then $s^\delta$ converge to $0$ in $L^2((0,T); H^{-1}_\mathrm{loc}(\R^d))$.
\end{lem}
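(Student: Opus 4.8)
The plan is to follow the scheme of Lemma \ref{lem:conv_comm2}: the commutator $s^\delta$ is formally a \emph{double} divergence, but after one integration by parts it becomes the divergence of a vector field that actually converges to zero in $L^2 L^2_\mathrm{loc}$, which is enough to land in $H^{-1}_\mathrm{loc}$. Since convolution commutes with distributional derivatives,
\[
s^\delta = \sum_{i,j}\partial_{ij}T_{ij}^\delta = \dive V^\delta,\qquad V_i^\delta:=\sum_j\partial_j T_{ij}^\delta,\qquad T_{ij}^\delta(t,x):=\int_{\R^d}\big[a_{ij}(t,x-y)-a_{ij}(t,x)\big]w(t,x-y)\,\rho^\delta(y)\,\de y.
\]
For a.e.\ $t$ we have $a_{ij}(t,\cdot)\in W^{1,p}_\mathrm{loc}$ and $w(t,\cdot)\in L^q_\mathrm{loc}$, so (using $\sfrac1p+\sfrac1q\le\sfrac12$) $a_{ij}w\in L^1_\mathrm{loc}$ and $T_{ij}^\delta(t,\cdot)=(a_{ij}w)*\rho^\delta-a_{ij}(w*\rho^\delta)\in W^{1,p}_\mathrm{loc}$; hence $V^\delta(t,\cdot)\in L^1_\mathrm{loc}$ and $s^\delta=\dive V^\delta$ holds in the sense of distributions. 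Since $\|s^\delta(t,\cdot)\|_{H^{-1}(Q)}\le\|V^\delta(t,\cdot)\|_{L^2(Q)}$ for every bounded $Q$, it suffices to show $V^\delta\to 0$ in $L^2([0,T];L^2_\mathrm{loc}(\R^d))$.

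The key step is an explicit formula for $\partial_j T_{ij}^\delta$. Differentiating in $x_j$ (moving onto the kernel the derivative that would fall on $w$), changing variables $y=\delta z$, and writing $a_{ij}(t,x-\delta z)-a_{ij}(t,x)=-\delta\int_0^1 z\cdot\nabla a_{ij}(t,x-s\delta z)\,\de s$, one obtains
\[
\partial_j T_{ij}^\delta(t,x)=-\int_{\R^d}\int_0^1 z_k\,\partial_k a_{ij}(t,x-s\delta z)\,w(t,x-\delta z)\,\partial_j\rho(z)\,\de s\,\de z\;-\;\partial_j a_{ij}(t,x)\,(w*\rho^\delta)(t,x)
\]
(summation over $k$). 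Splitting $\partial_k a_{ij}(t,x-s\delta z)=\partial_k a_{ij}(t,x)-[\partial_k a_{ij}(t,x)-\partial_k a_{ij}(t,x-s\delta z)]$ and using the elementary identities $z_k\,\partial_j\rho(z)=\partial_j(z_k\rho(z))-\mathbbm{1}_{\{j=k\}}\rho(z)$ and $\int_{\R^d}z_k\,\partial_j\rho(z)\,\de z=-\mathbbm{1}_{\{j=k\}}$, the non-infinitesimal contribution $-\partial_j a_{ij}(w*\rho^\delta)$ is \emph{exactly cancelled} by the $\mathbbm{1}_{\{j=k\}}$-part, leaving
\[
\partial_j T_{ij}^\delta=-\,\partial_k a_{ij}\,\big(\psi_{jk}^\delta*w\big)\;+\;\int_{\R^d}\int_0^1 z_k\,\big[\partial_k a_{ij}(t,x)-\partial_k a_{ij}(t,x-s\delta z)\big]\,w(t,x-\delta z)\,\partial_j\rho(z)\,\de s\,\de z,
\]
where $\psi_{jk}(z):=\partial_j\big(z_k\rho(z)\big)$ is smooth, compactly supported and has $\int_{\R^d}\psi_{jk}=0$, and $\psi_{jk}^\delta(y):=\delta^{-d}\psi_{jk}(y/\delta)$. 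Without this cancellation $V^\delta$ would only be \emph{bounded}, not infinitesimal, in $L^2 L^2_\mathrm{loc}$, so this is really the heart of the matter.

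It then remains to check that both summands vanish in $L^2([0,T];L^2_\mathrm{loc})$. For the first: since $\int\psi_{jk}=0$, one has $\psi_{jk}^\delta*w(t,\cdot)\to 0$ locally (in $L^q$ by continuity of translations when $q<\infty$, and pointwise a.e.\ with the uniform bound $\|\psi_{jk}\|_{L^1}\|w\|_{L^\infty_t L^q_x}$ by a Lebesgue-point argument otherwise); combined with $\partial_k a_{ij}\in L^2_tL^p_x$, H\"older on a bounded set (exponents $p,q,\infty$ in space and $2,\infty$ in time, where again $\sfrac1p+\sfrac1q\le\sfrac12$ is used) and the Dominated Convergence Theorem give the claim. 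For the second, H\"older and continuity of translations in $L^p_\mathrm{loc}$ bound its $L^2(Q)$-norm, for a.e.\ $t$, by $C\|w(t,\cdot)\|_{L^q(Q')}\int_{B_1}\int_0^1|z|\,|\partial_j\rho(z)|\,\|\partial_k a_{ij}(t,\cdot)-\partial_k a_{ij}(t,\cdot-s\delta z)\|_{L^p(Q')}\,\de s\,\de z$, which tends to $0$ as $\delta\to 0$ and is dominated by a multiple of $\|w\|_{L^\infty_t L^q_x}\|\partial_k a_{ij}(t,\cdot)\|_{L^p(Q')}\in L^2_t$; Dominated Convergence once more concludes. Hence $V^\delta\to 0$ in $L^2 L^2_\mathrm{loc}$ and therefore $s^\delta=\dive V^\delta\to 0$ in $L^2([0,T];H^{-1}_\mathrm{loc}(\R^d))$. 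The only genuinely delicate point is the algebra of the second step --- organising the integration by parts and change of variables so that the term $\partial_j a_{ij}(w*\rho^\delta)$ is seen to disappear; the endpoints $q=\infty$ and $\sfrac1p+\sfrac1q=\sfrac12$ require only the minor adjustment indicated above.
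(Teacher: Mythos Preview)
Your argument is correct and follows essentially the same scheme as the paper: write $s^\delta=\dive V^\delta$ with $V^\delta_i=\sum_j\partial_j T^\delta_{ij}$, and show that the only non-infinitesimal contribution --- the term $\partial_j a_{ij}\,(w*\rho^\delta)$ --- disappears thanks to the identity $\int z_k\,\partial_j\rho(z)\,\de z=-\delta_{jk}$. The paper carries out this same cancellation by pairing $s^\delta$ against a test function $\varphi$, passing to the limit in $(t,x,z)$ via Dominated Convergence, and observing that the two surviving pieces cancel after an integration by parts in $z$; you instead make the cancellation explicit \emph{before} passing to the limit, via the mean-zero kernel $\psi_{jk}$, and then prove $V^\delta\to 0$ in $L^2_tL^2_{\mathrm{loc}}$ directly. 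This is a slightly cleaner route to genuine \emph{norm} convergence in $L^2_tH^{-1}_{\mathrm{loc}}$ (the paper's computation literally gives $\int s^\delta\varphi\to 0$ for each fixed $\varphi$, and one has to read the uniformity over $\|\varphi\|_{L^2H^1_0}\le 1$ out of the estimates), but the underlying mechanism is identical.
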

\begin{proof}
Let $Q$ be any bounded subset of $\R^d$. Then, for any $\varphi\in L^2((0,T);H^{1}_0(Q))$ we compute
\begin{align*}
	&\iint_{(0,T) \times Q}s^\delta(t,x)\varphi(t,x)\de t\de x=\iint_{(0,T) \times Q}\bigg( \sum_{i,j}[\partial_{ij}(a_{ij}w)]*\rho_\delta-\partial_{ij}[a_{ij}(w*\rho_\delta)] \bigg) \varphi(t,x)\de t\de x\\
	= & \sum_{i,j}\iint_{(0,T) \times Q}\partial_{ij}\left(\int_{\R^d}a_{ij}(t,y)w(t,y)\rho_\delta(x-y)\de y-a_{ij}(t,x)\int_{\R^d}w(t,y)\rho_\delta(x-y)\de y\right) \varphi(t,x)\de t\de x\\
	=&  -\sum_{i,j}\iint_{(0,T) \times Q}\partial_{j}\left(\int_{\R^d}a_{ij}(t,y)w(t,y)\rho_\delta(x-y)\de y-a_{ij}(t,x)\int_{\R^d}w(t,y)\rho_\delta(x-y)\de y\right) \partial_i\varphi(t,x)\de t\de x\\
   = & -\sum_{i,j}\int_0^T\int_{\R^d}\int_{Q} \partial_i \varphi(t,x)w(t,y)\frac{1}{\delta^{d+1}}\partial_{j}\rho\left(\frac{x-y}{\delta}\right)\left(a_{ij}(t,y)-a_{ij}(t,x)\right)\de x\de y\de t\\
	& +\sum_{i}\int_0^T\int_{Q}\int_{\R^d}w(t,y)\rho_\delta(x-y) \partial_i \varphi(t,x) \bigg(\sum_j\partial_{j} a_{ij}(t,x)\bigg)\de y \de t\de x,
\end{align*}
and by using a change of variables and Fubini to exchange integrals we finally get that
\begin{align*}
  &\iint_{(0,T) \times Q}s^\delta(t,x)\varphi(t,x)\de t\de x\\
  = &-\sum_{i}\int_0^T\int_{\R^d}\int_{Q}\partial_i \varphi(t,x)w(t,x-\delta z)\sum_j\left(\partial_{j}\rho(z)\left(\frac{a_{ij}(t,x-\delta z)-a_{ij}(t,x)}{\delta}\right)\right)\de x\de z\de t\\
	& +\sum_{i}\int_0^T\int_{Q}\int_{\R^d}w(t,x-\delta z)\rho(z) \partial_i \varphi(t,x) \bigg(\sum_j\partial_{j} a_{ij}(t,x)\bigg)\de z \de t\de x.
\end{align*}
Thus, by letting $\delta \to 0$ we obtain
\begin{equation*}
		\begin{split}
		\lim_{\delta\to 0} \iint_{(0,T) \times Q} s^\delta(t,x)\varphi(t,x)\de t\de x  = & \sum_{i,j}\int_0^T\int_{\R^d}\int_{Q}\partial_i \varphi(t,x)w(t,x)\partial_{j}\rho(z)\nabla a_{ij}(t,x)\cdot z\de x\de z\de t\\
        &+\sum_{ij}\int_0^T\int_{Q}\int_{\R^d}w(t,x)\rho(z) \partial_i \varphi(t,x) \partial_{j} a_{ij}(t,x)\de z \de t\de x\\
			= & -\sum_{i,j}\int_0^T\int_{\R^d}\int_{Q}\partial_i \varphi(t,x)w(t,x)\rho(z)\nabla a_{ij}(t,x)\cdot e_j\de x\de z\de t\\
			&+\sum_{ij}\int_0^T\int_{Q}\int_{\R^d}w(t,x)\rho(z) \partial_i \varphi(t,x) \partial_{j} a_{ij}(t,x)\de z \de t\de x =0,
		\end{split}	
\end{equation*}
where in the last step we have integrated by parts in the $z$-variable. This concludes the proof.
\end{proof}

We conclude this section with the following commutator estimate: it will be crucial to recover the uniqueness of parabolic solutions in the setting of \cite{F08}.
\begin{lem}
\label{lem:conv_comm5}
Let $\a \in L^\infty((0,T); L^\infty_\mathrm{loc}(\R^d;\R^{d\times d}))$ be a matrix valued function and let $\nabla w \in L^2((0,T);L^2_\mathrm{loc}(\R^d))$. Let $(\rho^\delta)_\delta$ be a family of smooth convolutions kernels and consider the commutator $s_1^\delta$ as follows 
\begin{equation}\label{def:s_1}
s_1^\delta:=\sum_i\partial_i\bigg(\sum_j a_{ij}\partial_j w^\delta-(a_{ij}\partial_j w)*\rho^\delta\bigg).
\end{equation}
Then $s_1^\delta$ converges to $0$ in $L^2((0,T); H^{-1}_\mathrm{loc}(\R^d))$.
\end{lem}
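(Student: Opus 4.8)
Here is how I would prove Lemma \ref{lem:conv_comm5}.

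The plan is to recognise $s_1^\delta$ as the spatial divergence of a vector field that converges \emph{strongly} to zero in $L^2([0,T];L^2_\mathrm{loc}(\R^d))$; the $H^{-1}_\mathrm{loc}$ statement then follows at once from the boundedness of the divergence operator $\dive_x\colon L^2_\mathrm{loc}(\R^d)\to H^{-1}_\mathrm{loc}(\R^d)$. To this end, I would first note that, since $\nabla w\in L^2([0,T];L^2_\mathrm{loc}(\R^d))$, for a.e.\ $t\in[0,T]$ one has $\partial_j w^\delta=\partial_j(w*\rho^\delta)=(\partial_j w)*\rho^\delta$, so that
\begin{equation*}
s_1^\delta=\sum_i\partial_i g_i^\delta,\qquad g_i^\delta:=\sum_j\Big(a_{ij}\big[(\partial_j w)*\rho^\delta\big]-(a_{ij}\partial_j w)*\rho^\delta\Big),
\end{equation*}
and I write $g^\delta=(g_1^\delta,\dots,g_d^\delta)$. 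Each $g_i^\delta$ is a Friedrichs-type (zeroth order) commutator: crucially no derivative of $\a$ enters, and the whole gain comes from the extra spatial regularity of $w$ with respect to a generic distributional solution.

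The core step is then to show that $g_i^\delta\to 0$ in $L^2([0,T];L^2_\mathrm{loc}(\R^d))$. Fix a bounded open set $Q\subset\R^d$ and a larger bounded open set $Q'$ containing a neighbourhood of $\overline Q$; for $\delta$ small enough the values on $Q$ of both convolutions in $g_i^\delta$ depend only on the restrictions of $w$ and $\nabla w$ to $Q'$. Since $\partial_j w\in L^2([0,T];L^2(Q'))$, standard properties of mollifiers give $(\partial_j w)*\rho^\delta\to\partial_j w$ in $L^2([0,T];L^2(Q))$: the spatial convergence holds for a.e.\ $t$, and the uniform bound $\|(\partial_j w)*\rho^\delta\|_{L^2(Q)}\le C\|\partial_j w\|_{L^2(Q')}$ lets us pass to the limit in time by dominated convergence. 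Multiplying by $a_{ij}\in L^\infty([0,T];L^\infty(Q'))$ yields $a_{ij}\big[(\partial_j w)*\rho^\delta\big]\to a_{ij}\partial_j w$ in $L^2([0,T];L^2(Q))$. On the other hand $a_{ij}\partial_j w\in L^2([0,T];L^2(Q'))$, hence also $(a_{ij}\partial_j w)*\rho^\delta\to a_{ij}\partial_j w$ in $L^2([0,T];L^2(Q))$. Subtracting the two limits gives $g_i^\delta\to 0$ in $L^2([0,T];L^2(Q))$, and since $Q$ is arbitrary, in $L^2([0,T];L^2_\mathrm{loc}(\R^d))$.

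Finally, for a bounded open set $Q$ and any $\varphi\in L^2([0,T];H^1_0(Q))$, integration by parts in space gives
\begin{equation*}
\left|\iint_{[0,T]\times Q}s_1^\delta\,\varphi\,\de t\,\de x\right|=\left|\iint_{[0,T]\times Q}\sum_i g_i^\delta\,\partial_i\varphi\,\de t\,\de x\right|\le\|g^\delta\|_{L^2([0,T];L^2(Q))}\,\|\nabla\varphi\|_{L^2([0,T];L^2(Q))},
\end{equation*}
so $\|s_1^\delta\|_{L^2([0,T];H^{-1}(Q))}\le\|g^\delta\|_{L^2([0,T];L^2(Q))}\to 0$, which is the claim. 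There is no serious obstacle here: the only points requiring care are the local-in-space bookkeeping (choosing $Q'$ and $\delta$ so that mollified quantities on $Q$ see only $w$ on $Q'$) and the uniform-in-time passage to the limit in the mollification, which is exactly where the $L^\infty_t$ bound on $\a$ compensates the mere $L^2_t$ integrability of $\nabla w$; once $g^\delta\to 0$ strongly in $L^2_tL^2_{x,\mathrm{loc}}$, everything reduces to the continuity of $\dive_x\colon L^2_\mathrm{loc}\to H^{-1}_\mathrm{loc}$.
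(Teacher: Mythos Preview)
Your proof is correct and follows essentially the same approach as the paper: both recognise the divergence form $s_1^\delta=\dive g^\delta$ and reduce the $L^2_tH^{-1}_{x,\mathrm{loc}}$ convergence to the strong $L^2_tL^2_{x,\mathrm{loc}}$ convergence of the inner vector field $g^\delta$. The only cosmetic difference is that the paper writes out the kernel representation $g_i^\delta(t,x)=\sum_j\int\rho(z)[a_{ij}(t,x)-a_{ij}(t,x-\delta z)]\partial_j w(t,x-\delta z)\,\de z$ and concludes via convergence in measure of translations plus H\"older, whereas you argue that each of the two summands in $g_i^\delta$ separately converges to $a_{ij}\partial_j w$ in $L^2_tL^2_{x,\mathrm{loc}}$.
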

\begin{proof}
The proof is very similar to the one of Lemma \ref{lem:conv_comm2} and relies on the divergence form of the commutator.
Given a bounded subset $Q$ of $\R^d$, we can estimate  
	\begin{align*}
		\|s_1^\delta & \|_{L^2(H^{-1}(Q))}=\sup_{\|\varphi\|_{L^2 H^1_0(Q)}\leq 1}\left|\iint_{(0,T) \times Q}s_1^\delta(t,x)\varphi(t,x)\de t\de x\right|\\ 
		& = \sup_{\|\varphi\|_{L^2 H^1_0(Q)}\leq 1} \left \vert \iint_{(0,T) \times Q} \sum_i\left( \sum_j a_{ij}\partial_j w^\delta-(a_{ij}\partial_j w)*\rho^\delta\right) \partial_i \varphi(t,x) \de t\de x \right \vert \\
        &\le \sup_{\|\varphi\|_{L^2 H^1_0(Q)}\leq 1}\int_{B_1}\rho(z)\int_0^T\int_Q |\a(t,x)-\a(t,x-\delta z)||\nabla w(t,x-\delta z)||\nabla\varphi(t,x)|\de x \de t\de z.
		%	[\text{H\"older with $p,q,2$}]& \leq \|u \|_{L^\infty L^q}\int_{\T^d}\|\b(t,\cdot)-\b_{-\e z}(t,\cdot)\|_{L^p}\rho(z)\de z.
	\end{align*}
	As in the proof of Lemma \ref{lem:conv_comm}, the map $(t,x) \mapsto \a(t,x)-\a(t,x-\delta z)$ converges to $0$ in measure (for every fixed $z$) and then by H\"older inequality on the product space $(0,T) \times Q$ with exponents $(\infty,2,2)$  we can apply Lebesgue Dominated Convergence Theorem obtaining that $s_1^\delta \to 0$ in $L^2((0,T);H^{-1}_\mathrm{loc}(\R^d))$. 
\end{proof}

\begin{rem}
It is worth to note that the commutator $s_1^\delta$ does not converge pointwise a.e. to $0$, as shown in \eqref{comm:s1}. However it converges in the sense of the distributions and, by density, in the functional space $L^2_tH^{-1}_\mathrm{loc}$.
\end{rem}

\section{Well-posedness of the Cauchy problem}
In this section we collect some results on the Cauchy problem for the Fokker-Planck equation \eqref{eq:fp}. Our aim is to give a complete presentation of the available results and to provide technical extensions. In this regard, we first analyse the Fokker-Planck equation in divergence form, providing some technical improvements of the results in \cite{LBL_CPDE}. Then we move to the analysis of the Fokker-Planck equation, proving existence and uniqueness in a slightly more general setting than the one considered in \cite{F08}. Theorem \ref{thm:uniqueness_weak_parabolic_FP} somehow interpolates the results from \cite{LBL_CPDE} and \cite{F08} using the $L^2H^{-1}_\mathrm{loc}$ convergence of the commutators.

\subsection{The equation in divergence form}
In this section we study the initial value problem
\begin{equation}\label{eq:divFP}\tag{FP-div}
\begin{cases}
\partial_t u + \dive(\tilde \b u) -\frac{1}{2} \sum_{i,j}\partial_{i}(a_{ij}\partial_j u)=0 &  \text{ in } (0,T) \times \R^d, \\
u\vert_{t=0}=u_0 & \text{ in } \R^d,
\end{cases}
\end{equation}
which is the Cauchy problem for the Fokker-Planck equation in divergence form. Requiring only integrability assumptions on $\tilde\b$ and $\a$ we can give the following definition of solutions to \eqref{eq:divFP}.
\begin{defn}\label{def:weak-solution}
Assume that 
\begin{equation}\label{eq:ass_para_sol_FP}
	\tilde\b\in L^1((0,T); L^2(\R^d;\R^d)), \qquad \a\in L^2((0,T); L^2(\R^d; \R^{d\times d})), \qquad u_0\in L^2(\R^d).
\end{equation}
A function $u\in L^{\infty}((0,T);L^2(\R^d))\cap L^2((0,T);H^1(\R^d))$ is a {\em parabolic solution} to \eqref{eq:divFP} if for any $\varphi\in C^\infty_c([0,T)\times\R^d)$ the following identity holds: 
	$$
	\int_0^T\int_{\R^d} u\bigg(\partial_t\varphi+ \tilde\b\cdot\nabla\varphi\bigg)-\frac{1}{2}\sum_{ij}	a_{ij}\partial_j u\partial_{i}\varphi \de x \de t+\int_{\R^d} u_0\varphi(0,\cdot) \de x=0.
	$$
\end{defn}
The following theorem holds.
\begin{thm}\label{thm:esistenza unicita fp div}
Let $p,q \in [2,\infty]$ be such that $1/p+1/q= 1/2$. Assume that: 
\begin{itemize}
    \item[(i)] $\tilde\b\in L^2((0,T); L^p(\R^d;\R^d))$;
    \item[(ii)] $\tilde\b$ satisfies the growth condition $\frac{\tilde\b}{1+|x|}:=\tilde\b_1+\tilde\b_2$ with
    \begin{equation}\label{growth condition b}
    \tilde\b_1\in L^1((0,T);L^1(\R^d;\R^d)),\qquad \tilde\b_2\in L^1((0,T);L^\infty(\R^d;\R^d));
    \end{equation}
    \item[(iii)] $(\dive \tilde\b)^{-}\in L^1((0,T);L^\infty(\R^d))$;
    \item[(iv)] $\a$ satisfies \ref{assu:bounded} and \ref{assu:elliptic}; 
    \item[(v)] $u_0\in L^2\cap L^q(\R^d)$. 
\end{itemize}
Then, there exists a unique parabolic solution $u$ to \eqref{eq:divFP}.
\end{thm}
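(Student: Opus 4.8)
The plan is to obtain existence and uniqueness separately, both via a regularization argument powered by the commutator estimates of Section~2.

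\emph{Existence.} I would regularize the coefficients and the datum by convolution, $\tilde\b^\e:=\tilde\b*\rho^\e$, $\a^\e:=\a*\rho^\e$ and $u_0^\e:=u_0*\rho^\e$; mollification preserves uniform ellipticity and the one-sided bound $\|(\dive\tilde\b^\e)^-\|_{L^\infty}\le\|(\dive\tilde\b)^-\|_{L^\infty}$, so standard linear parabolic theory produces smooth solutions $u^\e$ of the regularized problem. Testing against $u^\e$ and using \ref{assu:elliptic} together with $(\dive\tilde\b)^-\in L^1_tL^\infty_x$ gives
$$
\|u^\e(t)\|_{L^2}^2+\alpha\int_0^t\|\nabla u^\e\|_{L^2}^2\,\de s\le\|u_0\|_{L^2}^2+\int_0^t\|(\dive\tilde\b)^-\|_{L^\infty}\|u^\e\|_{L^2}^2\,\de s,
$$
and a parallel computation --- or, when $q=\infty$, the maximum principle --- yields the uniform bound for $\|u^\e(t)\|_{L^q}$, again controlled only through $(\dive\tilde\b)^-$. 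Gronwall turns these into uniform bounds in $L^\infty_t(L^2\cap L^q)_x\cap L^2_tH^1_x$, so a subsequence converges weakly/weakly-$*$ to some $u$ in that space; by linearity of \eqref{eq:divFP} the limit is a parabolic solution in the sense of Definition~\ref{def:weak-solution}, the growth condition (ii) on $\tilde\b$ together with the uniform $L^q$ bound ensuring convergence of every term in the weak formulation and that no mass escapes at infinity. In particular we obtain a parabolic solution that also lies in $L^\infty_tL^q_x$, and it is within this class that uniqueness is proved.

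\emph{Uniqueness.} By linearity it suffices to show that a parabolic solution $u\in L^\infty_t(L^2\cap L^q)_x\cap L^2_tH^1_x$ with $u_0=0$ is identically zero. Mollifying in space, $u^\delta:=u*\rho^\delta$ solves, in the sense of distributions,
$$
\partial_tu^\delta+\dive(\tilde\b u^\delta)-\tfrac12\partial_i(a_{ij}\partial_ju^\delta)=r^\delta-\tfrac12 s_1^\delta,
$$
where $r^\delta$ is the transport commutator \eqref{eq:comm_def} and $s_1^\delta$ the diffusion commutator \eqref{def:s_1}. Since $\sfrac1p+\sfrac1q=\sfrac12$ with $\tilde\b\in L^2_tL^p_x$ and $u\in L^\infty_tL^q_x$, Lemma~\ref{lem:conv_comm2} gives $r^\delta\to0$ in $L^2_tH^{-1}_{x,\mathrm{loc}}$, while \ref{assu:bounded} and $\nabla u\in L^2_{t,x}$ let Lemma~\ref{lem:conv_comm5} give $s_1^\delta\to0$ in $L^2_tH^{-1}_{x,\mathrm{loc}}$. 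Fix a cutoff $\chi_R\in C^\infty_c(B_{2R})$ with $\chi_R\equiv1$ on $B_R$ and $|\nabla\chi_R|\le C/R$, multiply the equation for $u^\delta$ by $\chi_R^2u^\delta$ and integrate over $\R^d$. After integration by parts, the diffusion term yields $-\tfrac12\int\chi_R^2 a_{ij}\partial_iu^\delta\partial_ju^\delta\le-\tfrac\alpha2\int\chi_R^2|\nabla u^\delta|^2$ up to a cross term absorbed by Young's inequality into $\tfrac\alpha4\int\chi_R^2|\nabla u^\delta|^2+C\int|\nabla\chi_R|^2|u^\delta|^2$; the drift term yields $-\tfrac12\int\chi_R^2(\dive\tilde\b)|u^\delta|^2\le\tfrac12\|(\dive\tilde\b)^-\|_{L^\infty}\int\chi_R^2|u^\delta|^2$ plus a term $\int\chi_R\,\nabla\chi_R\cdot\tilde\b\,|u^\delta|^2$; and the right-hand side equals the duality pairing of $r^\delta-\tfrac12 s_1^\delta$ with $\chi_R^2u^\delta$, which tends to $0$ as $\delta\to0$ because $\chi_R^2u^\delta\to\chi_R^2u$ in $L^2_tH^1_0(B_{2R})$ while $r^\delta,s_1^\delta\to0$ in $L^2_tH^{-1}(B_{2R})$.

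Passing to the limit $\delta\to0$ gives, for every $R>0$ and $t\in[0,T]$,
$$
\|\chi_Ru(t)\|_{L^2}^2+\tfrac\alpha2\int_0^t\|\chi_R\nabla u\|_{L^2}^2\,\de s\le\int_0^t\|(\dive\tilde\b)^-\|_{L^\infty}\|\chi_Ru\|_{L^2}^2\,\de s+E_R(t),
$$
where $E_R(t)$ collects the cutoff errors $\iint|\chi_R||\nabla\chi_R||\tilde\b||u|^2$ and $\iint|\nabla\chi_R|^2|u|^2$ over $[0,t]\times(B_{2R}\setminus B_R)$. To let $R\to\infty$ one argues as in DiPerna--Lions: on $B_{2R}\setminus B_R$ one has $|\nabla\chi_R|\le C/R\le C'/(1+|x|)$, so writing $\tilde\b/(1+|x|)=\tilde\b_1+\tilde\b_2$ with $\tilde\b_1\in L^1_tL^1_x$ and $\tilde\b_2\in L^1_tL^\infty_x$ and using $u\in L^\infty_tL^2_x$, the integrands of $E_R$ are dominated by $C(\tilde\b_1+\tilde\b_2+1)|u|^2\in L^1_tL^1_x$ and tend to $0$ pointwise, so $E_R(t)\to0$ by dominated convergence. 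Hence $\|u(t)\|_{L^2}^2\le\int_0^t\|(\dive\tilde\b)^-\|_{L^\infty}\|u(s)\|_{L^2}^2\,\de s$ and Gronwall forces $u\equiv0$.

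\emph{Main obstacle.} The delicate point is to control the two error mechanisms at the same time: localization is forced on us because no Sobolev bound is assumed on $\tilde\b$ or $\a$, so the commutators are only known to vanish in the local negative-Sobolev norm $L^2_tH^{-1}_{x,\mathrm{loc}}$ --- this is precisely why the new estimates of Lemmas~\ref{lem:conv_comm2} and~\ref{lem:conv_comm5}, rather than the $L^1$ ones, are needed --- while the lack of compact support forces the growth condition on $\tilde\b$ to annihilate the cutoff terms, and the cutoffs must be chosen so that both the commutator pairing and the cutoff errors are tamed together. A secondary but genuine technicality is to justify the energy identity rigorously --- that $\chi_R^2u^\delta$ is an admissible test function and that $\int\partial_tu^\delta\cdot\chi_R^2u^\delta=\tfrac12\tfrac{d}{dt}\|\chi_Ru^\delta\|_{L^2}^2$ --- which calls for a Steklov-type regularization in time (or the Lions--Magenes lemma), made slightly delicate by the mere $L^1_t$ integrability of $\tilde\b$ and $(\dive\tilde\b)^-$.
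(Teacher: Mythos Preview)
Your existence argument and the overall architecture of your uniqueness proof---mollify $u$, localize with a cutoff, use the $L^2_tH^{-1}_{\mathrm{loc}}$ commutator estimates of Lemmas~\ref{lem:conv_comm2} and~\ref{lem:conv_comm5}, then send $R\to\infty$ via the growth condition and finish with Gronwall---match the paper's approach almost exactly.

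There is, however, a genuine gap in your treatment of the cutoff error $E_R$. You claim that the integrand is dominated by $C(\tilde\b_1+\tilde\b_2+1)|u|^2\in L^1_tL^1_x$ and then invoke dominated convergence. The terms $|u|^2$ and $\tilde\b_2|u|^2$ are indeed in $L^1_{t,x}$ (using $\tilde\b_2\in L^1_tL^\infty_x$ and $u\in L^\infty_tL^2_x$), but the product $\tilde\b_1|u|^2$ need \emph{not} be integrable: you only have $\tilde\b_1\in L^1_tL^1_x$ and $u\in L^\infty_t(L^2\cap L^q)_x$, and for $q<\infty$ no H\"older pairing makes $|\tilde\b_1|\,|u|^2$ globally $L^1$. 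Your argument therefore only goes through when $q=\infty$.

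This is precisely the obstacle that forces the paper to introduce a bounded renormalization $\beta_M$ in place of $\beta(z)=z^2$: one chooses $\beta_M\in C^2\cap L^\infty$ with $\beta_M(z)=z^2$ on $\{|z|\le M\}$, $0\le\beta_M\le 2M^2$, $|\beta_M'(z)|\le C|z|$ and $\|\beta_M''\|_\infty\le C$, and multiplies the equation for $u^\delta$ by $\beta_M'(u^\delta)\varphi_R$ instead of $u^\delta\varphi_R$. The offending cutoff term then becomes $\iint\beta_M(u)\,|\tilde\b_1|\,|\nabla\varphi_R|$, which is now controlled by $2M^2\int_0^T\!\!\int_{|x|>R}|\tilde\b_1|$ and hence vanishes as $R\to\infty$ for each fixed $M$. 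The price is a new family of error terms supported on $\{|u^\delta|>M\}$, of the form
\[
\iint_{\{|u^\delta|>M\}}\big(|u^\delta\tilde\b|\,|\nabla u^\delta|\,|\beta_M''|+|\tilde\b|\,|\nabla u^\delta|\,|\beta_M'|+|\a|\,|\nabla u^\delta|^2|\beta_M''|\big)\varphi_R,
\]
which the paper bounds by $C\iint_{\{|u^\delta|>M\}}(|u^\delta\tilde\b|^2+|\nabla u^\delta|^2)$ via Young and then shows to be $o(1)$ as $M\to\infty$, uniformly in $\delta$, using Chebyshev ($\mathscr L^{d+1}(\{|u^\delta|>M\})\lesssim M^{-2}$) together with the uniform $L^1_{t,x}$ bound on $|u^\delta\tilde\b|^2+|\nabla u^\delta|^2$ coming from $1/p+1/q=1/2$ and $u\in L^2_tH^1_x$. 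One then passes to the limit in the order $\delta\to0$, applies Gronwall with the remaining error $H(R,M)+F(M)$, sends $R\to\infty$, and finally $M\to\infty$. Your proof is missing exactly this truncation layer.
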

\begin{proof}
Let $(\rho^\delta)_\delta$ be a standard family of mollifiers as in \eqref{def:moll} and let us define $\tilde\b^\delta:=\tilde\b*\rho^\delta$, $\a^\delta:=\a*\rho^\delta$, and $u_0^\delta=u_0*\rho^\delta$. Then, we consider the approximating problem
	\begin{equation}\label{eq:divFP-e}
	\begin{cases}
	\partial_t u^\delta+\dive(\tilde\b^\delta u^\delta)-\frac{1}{2} \sum_{i,j}\partial_{i}(a_{ij}^\delta \partial_j u^\delta)=0,\\
	u^\delta(0,\cdot)=u_0^\delta. 
	\end{cases}
	\end{equation}  
The equation \eqref{eq:divFP-e} is classically well-posed, so we can consider for any fixed $\delta>0$ its unique smooth solution $u^\delta$. We multiply the equation \eqref{eq:divFP-e} by $u^\delta$ and we integrate over $\R^d$, obtaining that 
\begin{equation}
\frac12\frac{\de}{\de t}\int_{\R^d}|u^\delta(t,x)|^2\de x+\frac12\int_{\R^d}|u^\delta(t,x)|^2\dive\tilde\b^\delta(t,x)\de x +\frac{1}{2} \sum_{i,j}\int_{\R^d}a_{ij}^\delta \partial_j u^\delta\partial_i u^\delta=0.
\end{equation}
Thus, we use the uniform ellipticity of $\a^\delta$ to obtain the inequality
\begin{equation}
\frac{\de}{\de t}\int_{\R^d}|u^\delta(t,x)|^2\de x+\alpha \int_{\R^d}|\nabla u^\delta(t,x)|^2\de x\leq \|(\dive\tilde\b^\delta(t,\cdot))^{-}\|_{\infty}\int_{\R^d}|u^\delta(t,x)|^2\de x,
\end{equation}
and by using Gronwall's lemma we obtain that
\begin{equation}
    \|u^\delta\|_{L^\infty L^2}+ \|u^\delta\|_{L^2H^1}\leq C(\alpha, \|(\dive\tilde\b)^{-}\|_{L^1 L^\infty})\|u_0\|_{L^2}.
\end{equation}
A standard compactness argument and the linearity of the equation \eqref{eq:divFP} guarantee that 
$$
u^\delta\weaktos u \in L^\infty L^2\cap L^2H^1,
$$
where $u$ is a weak solution to \eqref{eq:divFP}. We now move to the uniqueness: assume that $u$ is a weak solution to \eqref{eq:divFP} with $u_0=0$ and define $u^\delta:=u*\rho^\delta$. Then, $u^\delta$ solves the equation
\begin{equation}\label{eq:divFP-e+comm}
	\begin{cases}
	\partial_t u^\delta+\dive(\tilde\b u^\delta)-\frac{1}{2} \sum_{i,j}\partial_{i}(a_{ij} \partial_j u^\delta)=r^\delta+s_1^\delta,\\
	u^\delta(0,\cdot)=0,
	\end{cases}
	\end{equation}  
where $r^\delta$ and $s_1^\delta$ are the commutators defined in \eqref{eq:comm_def} and \eqref{def:s_1}. Consider a smooth function $\varphi$ such that 
$$
\varphi\geq 0,\hspace{0.3cm}\mathrm{supp} \ \varphi\subset B_2,\hspace{0.3cm}\varphi= 1 \mbox{ in }B_1,
$$
and define $\varphi_R(x)=\varphi\left(x/R\right)$.
Let $\beta_M\colon\R\to \R$ be an even function with the following properties: 
\begin{itemize}
    \item $\beta_M\in C^2\cap L^\infty(\R)$ with $\|\beta_M'\|_{\infty}< C M$ and $\|\beta_M''\|_\infty <C$ for some positive constant $C$ independent on $M$; 
    \item $\beta_M(z)=z^2$ for $|z|\le M$ for some positive $M$; 
    \item $0\le \beta_M(z) \le 2M^2$ for every $z \in \R$;
    \item $\beta_M(z) \le z^2$ for every $z \in \R$; 
    \item $\beta_M(z)=0 \iff z = 0$;
    \item $|\beta_M'(z)|\le C|z|$.
\end{itemize}

We multiply the equation \eqref{eq:divFP-e+comm} by $\beta_M'(u^\delta)\varphi_R$ and integrating on $(0,t)\times\R^d$ we obtain the following: for the term involving the vector field we have that
\begin{align*}
& \int_0^t\int_{\R^d}\dive(\tilde\b u^\delta)\beta_M'(u^\delta)\varphi_R\de x\de s \\ 
& = -\int_0^t \int_{\R^d}u^\delta \tilde\b \cdot\nabla \beta_M'(u^\delta)\varphi_R\de x\de s - \int_0^t \int_{\R^d} u^\delta \beta_M'(u^\delta)\tilde\b \cdot\nabla \varphi_R\de x\de s \\ 
& = -\int_0^t \int_{\R^d}u^\delta \beta_M''(u^\delta)\tilde\b \cdot\nabla u^\delta \varphi_R\de x\de s - \int_0^t \int_{\R^d} u^\delta \beta_M'(u^\delta)\tilde\b \cdot\nabla \varphi_R\de x\de s \\
& = - \iint_{\{|u^\delta|<M\}} \left( u^\delta \beta_M''(u^\delta)\tilde\b \cdot\nabla u^\delta \varphi_R + u^\delta \beta_M'(u^\delta)\tilde\b \cdot\nabla \varphi_R\right) \de x\de s \\
 & \qquad - \iint_{\{|u^\delta|>M\}} \left( u^\delta \beta_M''(u^\delta)\tilde\b \cdot\nabla u^\delta \varphi_R + u^\delta \beta_M'(u^\delta)\tilde\b \cdot\nabla \varphi_R\right) \de x\de s.
 \end{align*}
Now we use that $\beta_M(u^\delta)=|u^\delta|^2$ for $|u^\delta|\le M$, so
 \begin{align*}
 & = - 2\iint_{\{|u^\delta|<M\}} \left( u^\delta \tilde\b \cdot\nabla u^\delta \varphi_R + |u^\delta|^2 \tilde\b \cdot\nabla \varphi_R\right) \de x\de s \\
 & \qquad - \iint_{\{|u^\delta|>M\}} \left( u^\delta \beta_M''(u^\delta)\tilde\b \cdot\nabla u^\delta \varphi_R + u^\delta \beta_M'(u^\delta)\tilde\b \cdot\nabla \varphi_R\right) \de x\de s\\
 & = - \iint_{\{|u^\delta|<M\}} \left( \tilde\b \cdot\nabla \beta_M(u^\delta) \varphi_R + 2\beta_M(u^\delta) \tilde\b \cdot\nabla \varphi_R\right) \de x\de s \\
 & \qquad - \iint_{\{|u^\delta|>M\}} \left( u^\delta \beta_M''(u^\delta)\tilde\b \cdot\nabla u^\delta \varphi_R + u^\delta \beta_M'(u^\delta)\tilde\b \cdot\nabla \varphi_R\right) \de x\de s\\
  & = - \iint_{(0,t)\times \R^d} \tilde\b \cdot\nabla \beta_M(u^\delta) \varphi_R \de x\de s 
 - \iint_{\{|u^\delta|<M\}} 2\beta_M(u^\delta) \tilde\b \cdot\nabla \varphi_R \de x\de s\\
% & \quad + \iint_{\{|u^\delta|>M\}} \b \cdot\nabla \beta(u^\delta) \varphi_R \de x\de s\\
 & \quad - \iint_{\{|u^\delta|>M\}} \left( u^\delta \beta_M''(u^\delta)\tilde\b \cdot\nabla u^\delta \varphi_R + u^\delta \beta_M'(u^\delta)\tilde\b \cdot\nabla \varphi_R - \tilde\b \cdot\nabla \beta_M(u^\delta) \varphi_R \right) \de x\de s.
 \end{align*}
Finally, by integrating by parts we get that
 \begin{align*}
  & = \iint_{(0,t)\times \R^d} \left( \dive\tilde\b \beta_M(u^\delta) \varphi_R + \beta_M(u^\delta) \tilde\b \cdot \nabla \varphi_R \right)\de x\de s 
 \\
 & \qquad - \iint_{\{|u^\delta|<M\}} 2\beta_M(u^\delta) \tilde\b \cdot\nabla \varphi_R \de x\de s\\
 & \qquad - \iint_{\{|u^\delta|>M\}} \left( u^\delta \beta_M''(u^\delta)\tilde\b \cdot\nabla u^\delta \varphi_R + u^\delta \beta_M'(u^\delta)\tilde\b \cdot\nabla \varphi_R - \tilde\b \cdot\nabla \beta_M(u^\delta) \varphi_R \right) \de x\de s\\
 & = \iint_{(0,t)\times \R^d} \dive\tilde\b \beta_M(u^\delta) \varphi_R \de x\de s +\iint_{\{|u^\delta>M\}} \beta_M(u^\delta) \tilde\b \cdot \nabla \varphi_R \de x\de s
 \\
 & \qquad - \iint_{\{|u^\delta|<M\}} \beta_M(u^\delta) \tilde\b \cdot\nabla \varphi_R \de x\de s\\
 & \qquad - \iint_{\{|u^\delta|>M\}} \left( u^\delta \beta_M''(u^\delta)\tilde\b \cdot\nabla u^\delta \varphi_R + u^\delta \beta_M'(u^\delta)\tilde\b \cdot\nabla \varphi_R - \tilde\b \cdot\nabla \beta_M(u^\delta) \varphi_R \right) \de x\de s.
 % 
 %& = \iint_{(0,t)\times \R^d} \dive\b \beta(u^\delta) \varphi_R \de x\de s - \iint_{\{|u^\delta|<M\}} \beta(u^\delta) \b \cdot\nabla \varphi_R \de x\de s\\
 % & \qquad - \iint_{\{|u^\delta|>M\}} \left( u^\delta \beta''(u^\delta)\b \cdot\nabla u^\delta \varphi_R + (u^\delta \beta'(u^\delta)-\beta(u^\delta))\b \cdot\nabla \varphi_R - \b \cdot\nabla \beta(u^\delta) \varphi_R \right) \de x\de s\\
\end{align*}
On the other hand, for the term involving the diffusion we have
\begin{align}
\int_0^t\int_{\R^d}\partial_{i}(a_{ij}&\partial_j u^\delta)\beta_M'(u^\delta)\varphi_R\de x\de s\nonumber\\
&=-\int_0^t\int_{\R^d}a_{ij}\partial_j u^\delta\partial_iu^\delta\beta_M''(u^\delta)\varphi_R\de x \de s-\int_0^t\int_{\R^d}a_{ij}\partial_j u^\delta\beta_M'(u^\delta)\partial_i\varphi_R\de x \de s\nonumber\\
&=-\int_0^t\int_{\R^d}a_{ij}\partial_j u^\delta\partial_iu^\delta\beta_M''(u^\delta)\varphi_R\de x \de s
-\int_0^t\int_{\R^d}a_{ij}\partial_j \beta_M(u^\delta)\partial_i\varphi_R\de x \de s.\label{commento lbl}
\end{align}
Then, we use the uniform ellipticity of $\a$, i.e.
\begin{equation}
2\alpha\iint_{\{|u^\delta|<M\}}|\nabla u^\delta|^2\varphi_R\de x \de s\leq \iint_{\{|u^\delta|<M\}}a_{ij}\partial_i u^\delta\partial_j u^\delta\beta_M''(u^\delta)\varphi_R\de x \de s
\end{equation}
to obtain the following inequality 
\begin{align}
\int_{\R^d}\beta_M(u^\delta)\varphi_R\de x&+ \alpha\iint_{\{|u^\delta|<M\}}|\nabla u^\delta|^2\varphi_R\de x \de s\nonumber\\
& \le -  \iint_{(0,t)\times \R^d} \dive\tilde\b \beta_M(u^\delta) \varphi_R \de x\de s  -\iint_{\{|u^\delta>M\}} \beta_M(u^\delta) \tilde\b \cdot \nabla \varphi_R \de x\de s
 \nonumber\\
 & + \iint_{\{|u^\delta|<M\}} \beta_M(u^\delta) \tilde\b \cdot\nabla \varphi_R \de x\de s\nonumber\\
 & + \iint_{\{|u^\delta|>M\}} \left( u^\delta \beta_M''(u^\delta)\tilde\b \cdot\nabla u^\delta \varphi_R + u^\delta \beta_M'(u^\delta)\tilde\b \cdot\nabla \varphi_R - \tilde\b \cdot\nabla \beta_M(u^\delta) \varphi_R \right) \de x\de s \nonumber\\ 
 & -\frac12 \iint_{\{|u^\delta|>M\}}a_{ij}\partial_j u^\delta\partial_iu^\delta\beta_M''(u^\delta)\varphi_R\de x \de s -\frac12 \int_0^t\int_{\R^d}a_{ij}\partial_j \beta_M(u^\delta)\partial_i\varphi_R\de x \de s \nonumber\\
 & + \int_0^t\int_{\R^d}(r^\delta+s^\delta)\beta_M'(u^\delta)\varphi_R\de x\de s. \nonumber
 \end{align}
Then, we exploit the fact that we need to account only for the negative part of the divergence in the term on the right, and using trivial estimates we obtain that
 \begin{align}
 \int_{\R^d}\beta_M&(u^\delta)\varphi_R\de x \le \iint_{(0,t)\times \R^d} (\dive\tilde\b)^- \beta_M(u^\delta) \varphi_R \de x\de s +  \iint_{(0,t)\times \R^d} \beta_M(u^\delta) |\tilde\b||\nabla \varphi_R| \de x\de s\nonumber \\
 & +  \iint_{\{|u^\delta|>M\}} \left(|u^\delta\tilde\b| |\nabla u^\delta| | \beta_M''(u^\delta)|+|\tilde\b||\nabla u^\delta||\beta_M'(u^\delta)|+\frac12|\a||\nabla u^\delta|^2|\beta_M''(u^\delta)|\right)\varphi_R \de x \de s  \nonumber\\
 & +  \iint_{\{|u^\delta|>M\}}|u^\delta \tilde\b||\beta_M'(u^\delta)||\nabla\varphi_R|\de x \de s+\frac12\int_0^t\int_{\R^d}|\a||\beta_M'(u^\delta)||\nabla u^\delta||\nabla\varphi_R|\de x \de s \nonumber\\ 
 & + \int_0^t\int_{\R^d}(r^\delta+s^\delta)\beta_M'(u^\delta)\varphi_R\de x\de s.\label{stime cacca}
\end{align}
Let us estimate by Young 
\begin{align*}
& \iint_{\{|u^\delta|>M\}} \left(|u^\delta\tilde\b| |\nabla u^\delta| | \beta_M''(u^\delta)|+|\tilde\b||\nabla u^\delta||\beta_M'(u^\delta)|+\frac12|\a||\nabla u^\delta|^2|\beta_M''(u^\delta)|\right)\varphi_R \de x \de s  \\
& \le \|\beta_M''\|_\infty  \iint_{\{|u^\delta|>M\}}|u^\delta\tilde\b| |\nabla u^\delta| \de x \de s+C\iint_{\{|u^\delta|>M\}}|\tilde\b||u^\delta||\nabla u^\delta|\de x\de s\\
&+\frac12\|\a\|_\infty\|\beta_M''\|_\infty  \iint_{\{|u^\delta|>M\}}|\nabla u^\delta|^2 \de x \de s\\
& \le C( \|\beta_M''\|_\infty,\|\a\|_\infty) \iint_{\{|u^\delta|>M\}}(|u^\delta\tilde\b|^2 +|\nabla u^\delta|^2 ) \de x \de s. 
\end{align*}
By Chebishev's inequality we can estimate the super-level as follows
\begin{equation}\label{eq:chebishev}
\mathscr{L}^{d+1}\big(\{ (t,x)\in(0,T)\times \R^d:|u^\delta(t,x)|>M \} \big)\leq \frac{\|u^\delta\|^2_{L^2_{t,x}}}{M^2}\leq \frac{C(T,\|u_0\|_{L^2})}{M^2},
\end{equation}
which is small in $M$ uniformly in $\delta$. Then, since the quantity $(|u^\delta\tilde\b|^2 +|\nabla u^\delta|^2 )\in L^1_{t,x}$ uniformly in $\delta$ by the assumptions, we use the equi-integrability of the integrand to obtain the following estimate
\begin{equation}
    \iint_{\{|u^\delta|>M\}} \left(|u^\delta\tilde\b| |\nabla u^\delta| | \beta_M''(u^\delta)|+|\tilde\b||\nabla u^\delta||\beta_M'(u^\delta)|+\frac12|\a||\nabla u^\delta|^2|\beta_M''(u^\delta)|\right)\varphi_R \de x \de s \leq F(M),
\end{equation}
for some function $F$, depending on $\|\beta_M''\|_{\infty},\|\a\|_\infty,\|u_0\|_{L^2}, T$, such that $F(M)\to 0$ as $M\to \infty$.
On the other hand, for the third line in \eqref{stime cacca} we proceed as follows: for the term involving the vector field we use H\"older inequality and \eqref{eq:chebishev} to obtain the estimate
\begin{align*}
\iint_{\{|u^\delta|>M\}}|u^\delta \tilde\b||\beta_M'(u^\delta)||\nabla\varphi_R|\de x \de s &\leq\frac{\|\beta_M'\|_\infty}{R}\|u^\delta\tilde\b\|_{L^2}\frac{C(T,\|u_0\|_{L^2})}{M}:=\frac{C}{R}.
\end{align*}
For the term involving the diffusion we use the definition of $\beta_M$ to obtain
\begin{align*}
\int_0^t\int_{\R^d}|\a||\beta_M'(u^\delta)||\nabla u^\delta||\nabla\varphi_R|\de x\de s &
\leq \frac{C\|\a\|_\infty}{R} \int_0^t\int_{\R^d}|\nabla u^\delta||u^\delta|\de x \de s\\
&\leq \frac{C\|\a\|_\infty}{R}\|\nabla u\|_{L^2L^2}\|u\|_{L^\infty L^2}\\
&\le \frac{C}{R},
\end{align*}
where in the penultimate inequality we used H\"older's inequality, and the constant $C$ depends on $\|\a\|_{L^\infty}, \|u_0\|_{L^2}, T$.
Summarizing, we rewrite \eqref{stime cacca} as follows
\begin{align*}
\int_{\R^d}\beta_M(u^\delta)\varphi_R\de x&+ \alpha\iint_{\{|u^\delta|<M\}}|\nabla u^\delta|^2\varphi_R\de x \de s\\
& \le \iint_{(0,t)\times \R^d} (\dive\tilde\b)^{-} \beta_M(u^\delta) \varphi_R \de x\de s + \iint_{(0,t)\times \R^d} \beta_M(u^\delta) |\tilde\b||\nabla \varphi_R| \de x\de s\\
& \quad+F(M)+\frac{C}{R} + \int_0^t\int_{\R^d}(r^\delta+s^\delta)\beta_M'(u^\delta)\varphi_R\de x\de s.
\end{align*}
Since $\beta_M'(u^\delta)\varphi_R \to \beta_M'(u)\varphi_R$ strongly in $L^2((0,T); H^1_0(B_{2R}))$, we can use Lemma \ref{lem:conv_comm2} and Lemma \ref{lem:conv_comm5} and, by letting $\delta\to 0$, we get that
\begin{align*}
\int_{\R^d}\beta_M(u)\varphi_R\de x &\le \iint_{(0,t)\times \R^d} (\dive\tilde\b)^{-} \beta_M(u) \varphi_R \de x\de s + \iint_{(0,t)\times \R^d} \beta_M(u) |\tilde\b||\nabla \varphi_R| \de x\de s + \frac{C}{R}\\
&\le \int_0^t\|(\dive\tilde\b(s,\cdot))^-\|_\infty\int_{\R^d}\beta_M(u)\varphi_R\de x \de s+2M^2\int_0^T\int_{|x|>R}|\tilde\b_1(s,x)|\de x\de s\\
&\quad +\int_0^T\|\tilde\b_2(s,\cdot)\|_\infty\int_{|x|>R}|u(s,x)|^2\de x \de s+F(M)+\frac{C}{R} \\
&:= \int_0^t\|(\dive\tilde\b(s,\cdot))^-\|_\infty\int_{\R^d}\beta_M(u)\varphi_R\de x \de s + H(R,M)+F(M), 
\end{align*}
where $\tilde\b_1$ and $\tilde\b_2$ are as in \eqref{growth condition b}, and the function $H(R,M)$ goes to $0$ as $R\to\infty$ for any fixed $M>0$. Notice that in the last inequality we used the definition of $\varphi_R$, the growth assumption on $\tilde\b$, and the properties of $\beta_M$. 
Finally, we apply Gronwall's inequality 
\begin{equation*}
\int_{\R^d}\beta_M(|u(t,x)|)\varphi_R\de x  \leq [H(R,M)+F(M)]\exp\left(\|(\dive\tilde\b)^-\|_{L^1L^\infty}\right),
\end{equation*}
and by letting $R\to\infty$, we obtain that 
\begin{equation}\label{final_gronwall}
    \int_{\R^d}\beta_M(|u(t,x)|)\de x\leq F(M)\exp\left(\|(\dive\tilde\b)^-\|_{L^1L^\infty}\right),
\end{equation}
for any $M>0$. By Lebesgue's dominated convergence Theorem 
$$
\int_{\R^d}\beta_M(|u(t,x)|)\de x\to \int_{\R^d}|u(t,x)|^2\de x ,
$$
as $M\to \infty$, and together with \eqref{final_gronwall}, this implies that $u(t,x)=0$ for a.e. $(t,x)\in(0,T)\times\R^d$.
\end{proof}

\begin{rem}
    An example of a function $\beta_M$ satisfying the assumptions of Theorem \ref{thm:esistenza unicita fp div} is given by a regularized version of the following function:
    \begin{equation}
    \beta_M(z):=
        \begin{cases}
            z^2\qquad &\mbox{ if }|z|\leq M,\\
            -\frac{z^2}{2}+2M z-\frac{M^2}{2} &\mbox{ if } M<|z|<2M,\\
            \frac72 M^2 &\mbox{ if }|z|>2M.
        \end{cases}
    \end{equation}
\end{rem}

\subsubsection{Comparison with \cite{LBL_CPDE}}\label{sss:comparison} We now compare the uniqueness result of 
Theorem \ref{thm:esistenza unicita fp div} with Proposition 4 of \cite{LBL_CPDE}, which we report here for completeness.
\begin{prop}\label{prop:lebris-lions}
    Assume that the matrix $\a:=\bsigma\bsigma^T$ is uniformly positive definite and that
    \begin{equation}
    \begin{cases}
        \tilde\b\in L^2((0,T);L^2_\mathrm{loc}(\R^d;\R^d)),\qquad \frac{\tilde\b}{1+|x|}\in L^1((0,T);L^1+L^\infty(\R^d;\R^d)),\\
        \dive\tilde\b \in L^1((0,T);L^\infty(\R^d)),\\
        \bsigma\in L^\infty((0,T);L^\infty_\mathrm{loc}(\R^d;\R^{d\times d})),\qquad \frac{\bsigma}{1+|x|}\in L^2((0,T);L^2+L^\infty(\R^d; \R^{d\times d})).
    \end{cases}
    \end{equation}
Then, for each initial condition in $L^1\cap L^\infty(\R^d)$, equation \eqref{eq:divFP} has a unique solution in the space
\begin{equation}
    \{u\in L^\infty((0,T);L^1\cap L^\infty(\R^d)), \,\,u\in L^2((0,T);H^1(\R^d)),\,\, \bsigma^T \nabla u \in L^2((0,T);L^2(\R^d))\}.
\end{equation}
\end{prop}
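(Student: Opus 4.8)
The plan is to adapt, essentially verbatim, the proof of Theorem~\ref{thm:esistenza unicita fp div}; the only genuinely new feature is that the diffusion matrix is no longer globally bounded but merely locally bounded, with quadratic growth at infinity, so the cut-off part of the argument must be run with more care. For \emph{existence} I would regularise $\tilde\b^\delta:=\tilde\b*\rho^\delta$, $\a^\delta:=\a*\rho^\delta$ (still uniformly positive definite, hence $\a^\delta=\bsigma^\delta(\bsigma^\delta)^T$ with $\bsigma^\delta:=(\a^\delta)^{1/2}$ smooth), $u_0^\delta:=u_0*\rho^\delta$, and consider the unique smooth solution $u^\delta$ of the regularised equation. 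Testing with $u^\delta$ and using uniform ellipticity together with $\|(\dive\tilde\b^\delta(t))^-\|_\infty\le\|(\dive\tilde\b(t))^-\|_\infty$, Gronwall gives a uniform bound for $\|u^\delta\|_{L^\infty_tL^2_x}+\|u^\delta\|_{L^2_tH^1_x}+\|(\bsigma^\delta)^T\nabla u^\delta\|_{L^2_{t,x}}$ in terms of $\|u_0\|_{L^2}$; testing with regularisations of $\sgn(u^\delta)$ and of $|u^\delta|^{p-2}u^\delta$ and letting $p\to\infty$ (treating the first-order terms with the cut-off $\varphi_R$ and the growth hypothesis on $\tilde\b$ as below) yields uniform $L^\infty_tL^1_x$ and $L^\infty_tL^\infty_x$ bounds. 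Along a subsequence $u^\delta\weaktos u$ in $L^\infty_t(L^1\cap L^\infty)$, $u^\delta\weakto u$ in $L^2_tH^1_x$; since $\a^\delta\to\a$ in $L^1_{t,x,\mathrm{loc}}$, convexity and lower semicontinuity give $\int\langle\nabla u,\a\nabla u\rangle\le\liminf_\delta\int\langle\nabla u^\delta,\a^\delta\nabla u^\delta\rangle<\infty$, i.e.\ $\bsigma^T\nabla u\in L^2_{t,x}$, and linearity of \eqref{eq:divFP} shows $u$ solves it with datum $u_0$. Thus $u$ lies in the required class.

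For \emph{uniqueness} I would use linearity to reduce to $u_0=0$ and prove $u\equiv0$. Setting $u^\delta:=u*\rho^\delta$ one has
\[
\partial_tu^\delta+\dive(\tilde\b u^\delta)-\tfrac12\partial_i(a_{ij}\partial_ju^\delta)=r^\delta-\tfrac12 s_1^\delta,\qquad u^\delta(0,\cdot)=0,
\]
with $r^\delta$ as in \eqref{eq:comm_def} and $s_1^\delta$ as in \eqref{def:s_1}. Since $u\in L^\infty_tL^2_{x,\mathrm{loc}}$, $\nabla u\in L^2_tL^2_{x,\mathrm{loc}}$, $\tilde\b\in L^2_tL^2_{x,\mathrm{loc}}$ and $\dive\tilde\b\in L^1_tL^\infty_x$, Lemma~\ref{lem:conv_comm} gives $r^\delta\to0$ in $L^1_tL^1_{x,\mathrm{loc}}$, and since $\a=\bsigma\bsigma^T\in L^\infty_tL^\infty_{x,\mathrm{loc}}$, Lemma~\ref{lem:conv_comm5} gives $s_1^\delta\to0$ in $L^2_tH^{-1}_{x,\mathrm{loc}}$. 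Fix $R>0$ and take $\varphi_R(x)=\psi(x/R)$ with $\psi\in C^\infty_c(B_2)$, $\psi\equiv1$ on $B_1$, $0\le\psi\le1$ and $|\nabla\psi|^2\le C\psi$, so that $\varphi_R,\nabla\varphi_R$ are supported in $\overline{B_{2R}}$, $|\nabla\varphi_R|\le CR^{-1}$ and $|\nabla\varphi_R|^2\varphi_R^{-1}\le CR^{-2}$. Testing the displayed equation with $u^\delta\varphi_R$ (legitimate, $u^\delta$ being smooth in $x$ with $u^\delta\in L^2_tH^1_x(B_{2R})$, $\partial_tu^\delta\in L^2_tH^{-1}_x(B_{2R})$ by the equation), using that $u^\delta$ is smooth and $u\in L^\infty$ — so the first-order term reduces to $\tfrac12\int\dive\tilde\b\,|u^\delta|^2\varphi_R-\tfrac12\int|u^\delta|^2\,\tilde\b\cdot\nabla\varphi_R$ and the diffusion term to the favourable $\tfrac12\int|\bsigma^T\nabla u^\delta|^2\varphi_R$ plus the boundary term $\tfrac12\int a_{ij}\partial_ju^\delta\,u^\delta\,\partial_i\varphi_R$ — and then letting $\delta\to0$ (all integrals live in the fixed $B_{2R}$, where $\bsigma\in L^\infty$, $\nabla u^\delta\to\nabla u$ and $\bsigma^T\nabla u^\delta\to\bsigma^T\nabla u$ in $L^2$, $u^\delta\to u$ in $L^2$ and boundedly in $L^\infty$, while $\int(r^\delta-\tfrac12 s_1^\delta)u^\delta\varphi_R\to0$ by the commutator convergences), I arrive at, for a.e.\ $t$,
\[
\tfrac12\int_{\R^d}|u(t)|^2\varphi_R+\tfrac12\iint_{(0,t)\times\R^d}|\bsigma^T\nabla u|^2\varphi_R\le\int_0^t\|(\dive\tilde\b(s))^-\|_\infty\Big(\tfrac12\int_{\R^d}|u(s)|^2\varphi_R\Big)\de s+\mathcal E_R(t),
\]
where $\mathcal E_R(t):=\tfrac12\iint_{(0,t)\times\R^d}|u|^2|\tilde\b||\nabla\varphi_R|+\tfrac12\iint_{(0,t)\times\R^d}|u||\bsigma||\bsigma^T\nabla u||\nabla\varphi_R|$, using $|a_{ij}\partial_ju\,\partial_i\varphi_R|\le|\bsigma^T\nabla u||\bsigma||\nabla\varphi_R|$.

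It then remains to show $\mathcal E_R(t)\to0$ as $R\to\infty$. In the second integral of $\mathcal E_R$ I would apply Young's inequality weighted with $\varphi_R$, $\tfrac12|u||\bsigma||\bsigma^T\nabla u||\nabla\varphi_R|\le\tfrac14|\bsigma^T\nabla u|^2\varphi_R+\tfrac14|u|^2|\bsigma|^2|\nabla\varphi_R|^2\varphi_R^{-1}$, and absorb the first summand on the left. Writing $\tilde\b/(1+|x|)=\tilde\b_1+\tilde\b_2$ as in \eqref{growth condition b} and $\bsigma/(1+|x|)=\bsigma_1+\bsigma_2$ with $\bsigma_1\in L^2_tL^2_x$, $\bsigma_2\in L^2_tL^\infty_x$, and using $1+|x|\le CR$, $|\nabla\varphi_R|\le CR^{-1}$ on $\{R<|x|<2R\}$, the remaining part of $\mathcal E_R(t)$ is bounded by $C\iint_{(0,t)\times\{|x|>R\}}\big(|u|^2(|\tilde\b_1|+|\tilde\b_2|)+|u|^2(|\bsigma_1|^2+|\bsigma_2|^2)\big)$. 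The pieces carrying $\tilde\b_1\in L^1_{t,x}$ and $\bsigma_1\in L^2_{t,x}$ are handled by $|u|^2\le\|u\|_{L^\infty_{t,x}}^2$ and the vanishing of the corresponding tails; the pieces carrying $\tilde\b_2,\bsigma_2$ are bounded by $\int_0^t\|\tilde\b_2(s)\|_\infty\big(\int_{\{|x|>R\}}|u(s)|^2\big)\de s$, resp.\ $\int_0^t\|\bsigma_2(s)\|^2_\infty\big(\int_{\{|x|>R\}}|u(s)|^2\big)\de s$, and one uses $\int_{\{|x|>R\}}|u(s)|^2\le\|u\|_{L^\infty_{t,x}}\int_{\{|x|>R\}}|u(s)|\to0$ pointwise and boundedly, so dominated convergence ($\|\tilde\b_2\|_\infty\in L^1_t$, $\|\bsigma_2\|^2_\infty\in L^1_t$, $u\in L^\infty_tL^1_x$) gives $\mathcal E_R(t)\to0$. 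Gronwall then yields $\int_{\R^d}|u(t)|^2\varphi_R\le C\,\mathcal E_R(t)\exp(\|(\dive\tilde\b)^-\|_{L^1_tL^\infty_x})$, and $R\to\infty$ (monotone convergence, $\varphi_R\uparrow1$) gives $u(t,\cdot)=0$ a.e.

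The step I expect to be the main obstacle is exactly the diffusion boundary term $\tfrac12\int a_{ij}\partial_ju\,u\,\partial_i\varphi_R$: because no control on $\nabla\a$ (nor on $\nabla\bsigma$) is available, it cannot be integrated by parts, and one is forced to play the regularity $\bsigma^T\nabla u\in L^2_{t,x}$ against the quadratic growth of $\bsigma$ encoded in $\bsigma/(1+|x|)\in L^2_t(L^2+L^\infty)$ — via the $\varphi_R$-weighted Young inequality, which is what forces the choice $|\nabla\psi|^2\le C\psi$ — together with the $L^1_x$-decay of $u$ at infinity. Matching these three ingredients is precisely what the growth hypotheses are designed for; once this term is under control, the remaining steps are a routine variant of the proof of Theorem~\ref{thm:esistenza unicita fp div}.
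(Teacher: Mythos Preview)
This proposition is not proved in the paper: it is quoted verbatim from \cite{LBL_CPDE} (their Proposition~4) solely for comparison with Theorem~\ref{thm:esistenza unicita fp div} and Corollary~\ref{cor:esistenza unicita fp div}. Your argument is correct and follows essentially the same route as the paper's own proof of Corollary~\ref{cor:esistenza unicita fp div} (and, in turn, the Le~Bris--Lions original): regularise, write the equation for $u^\delta$ with commutators $r^\delta$ and $s_1^\delta$, test against $u^\delta\varphi_R$, pass $\delta\to0$ using Lemmas~\ref{lem:conv_comm} and~\ref{lem:conv_comm5}, and then let $R\to\infty$ exploiting the growth hypotheses. Your treatment of the diffusion boundary term $\int a_{ij}\partial_j u\,u\,\partial_i\varphi_R$ via the $\varphi_R$-weighted Young inequality (hence the requirement $|\nabla\psi|^2\le C\psi$) is in fact slightly cleaner than the paper's sketch in Corollary~\ref{cor:esistenza unicita fp div}, because it produces the factor $|\nabla\varphi_R|^2\varphi_R^{-1}\lesssim R^{-2}$ that exactly cancels $(1+|x|)^2\sim R^2$ from $|\bsigma|^2$; the paper's sketch uses an unweighted Young split and kills the first piece with the global bound $\bsigma^T\nabla u\in L^2_{t,x}$ and $|\nabla\varphi_R|\le C/R$, but is terser on the second piece.
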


Note that if we consider the assumptions of Theorem \ref{thm:esistenza unicita fp div} with $p=2,q=\infty$ LeBris-Lions' result is not immediately recovered because of local integrability assumptions. However, by making the following changes to the proof of Theorem \ref{thm:esistenza unicita fp div}, we can prove a slightly different result. 
\begin{cor}\label{cor:esistenza unicita fp div}
    Assume that the matrix $\a:=\bsigma\bsigma^T$ is uniformly positive definite and that
    \begin{equation}
    \begin{cases}
        \tilde\b\in L^2((0,T);L^2(\R^d;\R^d)),\qquad \frac{\tilde\b}{1+|x|}\in L^1((0,T);L^1+L^\infty(\R^d;\R^d)),\\
        (\dive\tilde\b)^- \in L^1((0,T);L^\infty(\R^d)),\\
        \bsigma\in L^\infty((0,T);L^\infty_\mathrm{loc}(\R^d;\R^{d\times d})),\qquad \frac{\bsigma}{1+|x|}\in L^2((0,T);L^2+L^\infty(\R^d; \R^{d\times d})).
    \end{cases}
    \end{equation}
Then, for each initial condition in $L^1\cap L^\infty(\R^d)$, equation \eqref{eq:divFP} has a unique solution in the space
\begin{equation}
    \{u\in L^\infty((0,T);L^1\cap L^\infty(\R^d)), \,\,u\in L^2((0,T);H^1(\R^d)),\,\, \bsigma^T \nabla u \in L^2((0,T);L^2(\R^d))\}.
\end{equation}
\end{cor}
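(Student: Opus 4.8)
The proof of Corollary \ref{cor:esistenza unicita fp div} follows the same scheme as the proof of Theorem \ref{thm:esistenza unicita fp div}, with modifications that account for the fact that $\bsigma$ (and hence $\a$) is only \emph{locally} bounded and that we work with $L^1 \cap L^\infty$ data rather than $L^2$ data. The plan is to prove uniqueness by assuming $u_0 = 0$, mollifying a putative solution $u$ as $u^\delta := u * \rho^\delta$, and deriving the regularized equation \eqref{eq:divFP-e+comm} with commutators $r^\delta$ and $s_1^\delta$. One then tests against $\beta_M'(u^\delta)\varphi_R$ and runs the very same chain of integrations by parts. The first observation is that, since $u \in L^\infty_t(L^1 \cap L^\infty_x)$, one automatically has $u \in L^\infty_t L^2_x$, so the Chebyshev estimate \eqref{eq:chebishev} and all the superlevel-set bounds (the function $F(M)$) go through verbatim; the parabolic energy estimate still gives $u \in L^2_t H^1_x$, so $|\nabla u^\delta|^2$ is equi-integrable as before.

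The key change concerns the terms in which $\a$ (or $\bsigma$) appears multiplied by a cutoff derivative $\nabla\varphi_R$, since we can no longer pull out $\|\a\|_\infty$. Here one uses instead the growth hypothesis $\bsigma/(1+|x|) \in L^2_t(L^2_x + L^\infty_x)$: writing $\bsigma/(1+|x|) = \bsigma_1 + \bsigma_2$ and noting that on $\mathrm{supp}\,\nabla\varphi_R \subset \{R < |x| < 2R\}$ one has $|\nabla\varphi_R| \lesssim 1/R$ and $1+|x| \lesssim R$, so $|\a||\nabla\varphi_R| \lesssim (1+|x|)|\bsigma|\,|\nabla\varphi_R| \lesssim |\bsigma|$; thus the diffusion–cutoff term is controlled by $\int_0^T \int_{|x|>R} |\bsigma|\,|\nabla u^\delta|\,|u^\delta| \,\de x\,\de s$, which (splitting $\bsigma = \bsigma_1(1+|x|) + \bsigma_2(1+|x|)$, using H\"older with the $L^2_tH^1_x \cap L^\infty_t L^2_x$ bounds on $u$ and $\bsigma_1 \in L^2_{t,x}$, $\bsigma_2 \in L^2_tL^\infty_x$) tends to $0$ as $R \to \infty$ by dominated convergence. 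The analogous treatment applies to the term $\iint_{\{|u^\delta|>M\}} |u^\delta\tilde\b|\,|\beta_M'(u^\delta)|\,|\nabla\varphi_R|$ via the growth hypothesis $\tilde\b/(1+|x|) \in L^1_t(L^1_x + L^\infty_x)$, exactly as in the original proof. Similarly, in the Young-inequality step the term $\tfrac12\|\a\|_\infty\|\beta_M''\|_\infty \iint_{\{|u^\delta|>M\}} |\nabla u^\delta|^2$ is replaced by a localization-in-space argument: on a fixed large ball $\a$ is bounded, and the contribution from $|x|$ large is absorbed using the growth of $\bsigma$ together with the fact that $\beta_M''$ is supported where $|u^\delta| > M$, which has small measure.

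After these replacements, the inequality analogous to the displayed Gronwall estimate in the proof of Theorem \ref{thm:esistenza unicita fp div} reads
\[
\int_{\R^d}\beta_M(u)\varphi_R\,\de x \le \int_0^t \|(\dive\tilde\b(s,\cdot))^-\|_\infty \int_{\R^d}\beta_M(u)\varphi_R\,\de x\,\de s + \widetilde H(R,M) + F(M),
\]
where $\widetilde H(R,M) \to 0$ as $R \to \infty$ for each fixed $M$ (this now also absorbs the diffusion–cutoff contribution), and $F(M) \to 0$ as $M \to \infty$. Applying Gronwall's lemma, letting $R \to \infty$, then $M \to \infty$, and using $\beta_M(|u|) \to |u|^2$ by dominated convergence (legitimate since $u \in L^\infty_t L^2_x$) yields $u \equiv 0$. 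Existence follows from the same approximation and weak-compactness argument as in Theorem \ref{thm:esistenza unicita fp div}, replacing the uniform ellipticity of $\a^\delta$ and the $L^2$ initial bound by the uniform positive definiteness of $\bsigma\bsigma^T$ and the $L^1 \cap L^\infty$ bound, and noting that $\bsigma^T\nabla u \in L^2_{t,x}$ is part of the energy estimate. The main obstacle is the bookkeeping in the diffusion–cutoff term: one must check that the loss of global boundedness of $\a$ is genuinely compensated by the quadratic growth $\a/(1+|x|^2) \in L^\infty_t(L^1_x + L^\infty_x)$ implied by the hypothesis on $\bsigma$, so that the relevant integrals over $\{|x| > R\}$ are finite and vanish in the limit — everything else is a routine transcription of the proof already given.
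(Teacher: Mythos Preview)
Your proposal has a genuine gap in the treatment of the diffusion--cutoff term, and it also misses the key simplification the paper exploits.

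First, the simplification you overlook: since $u\in L^\infty_tL^\infty_x$ by assumption, the mollified function satisfies $\|u^\delta\|_\infty\le\|u\|_\infty$, so for $M>\|u\|_\infty$ the superlevel set $\{|u^\delta|>M\}$ is \emph{empty}. The paper therefore drops $\beta_M$ entirely and works with $\beta(z)=z^2$ from the start; all the $F(M)$ terms, the Chebyshev estimate, and the equi-integrability bookkeeping disappear. Your retention of $\beta_M$ is not wrong per se, but it obscures the argument and leads you to vague statements such as ``$\beta_M''$ is supported where $|u^\delta|>M$'' (which is false: $\beta_M''(z)=2$ on $|z|<M$).

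Second, and more seriously, your estimate of the diffusion--cutoff term is unjustified. You write $|\a|\,|\nabla\varphi_R|\lesssim(1+|x|)\,|\bsigma|\,|\nabla\varphi_R|$, which requires the pointwise bound $|\bsigma|\lesssim 1+|x|$; this is \emph{not} implied by $\bsigma/(1+|x|)\in L^2_t(L^2_x+L^\infty_x)$. The paper instead exploits the factorisation $a_{ij}=\sigma_{ik}\sigma_{jk}$ directly inside the integral and applies Young's inequality to obtain
\[
\int_0^t\!\!\int_{\R^d}\sigma_{ik}\sigma_{jk}\,\partial_j u^\delta\,\beta'(u^\delta)\,\partial_i\varphi_R
\le \tfrac12\int_0^t\!\!\int_{\R^d}|\sigma_{jk}\partial_j u^\delta|^2\,|\partial_i\varphi_R|
+\tfrac12\int_0^t\!\!\int_{\R^d}|\sigma_{ik}|^2\,|\beta'(u^\delta)|^2\,|\partial_i\varphi_R|.
\]
The first integral is controlled by $\tfrac{C}{R}\|\bsigma^T\nabla u\|_{L^2_{t,x}}^2$, which uses the assumption $\bsigma^T\nabla u\in L^2_{t,x}$ built into the solution class --- a piece of information your argument never invokes. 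The second integral uses $|\beta'(u^\delta)|\le C\|u\|_\infty$ together with the growth hypothesis on $\bsigma/(1+|x|)$ and the integrability of $|u|^2$ at infinity. Without this splitting, your bound $\int|\bsigma|\,|\nabla u^\delta|\,|u^\delta|$ over $\{|x|>R\}$ cannot be closed: you would need $\bsigma\in L^2_{t,x}$ globally, which is not assumed.
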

\begin{proof}
Since the argument is very similar to that of the Theorem \ref{thm:esistenza unicita fp div} we only sketch the proof. First, since we are dealing with bounded solutions, we can consider the function $\beta(z)=|z|^2$ and all the integrals on the set $\{|u^\delta|>M\}$ are identically zero for $M$ large enough. This implies that we have to use a Gronwall type argument on the following estimate
\begin{align}
\int_{\R^d}\beta(u^\delta)\varphi_R\de x&+ \alpha\iint_{\{|u^\delta|<M\}}|\nabla u^\delta|^2\varphi_R\de x \de s
\le -  \iint_{(0,t)\times \R^d} \dive\tilde\b \beta(u^\delta) \varphi_R \de x\de s \nonumber\\
&+\int_0^t\int_{\R^d}(r^\delta+s^\delta)\beta'(u^\delta)\varphi_R\de x\de s+  \iint_{(0,t)\times \R^d} \beta(u^\delta) |\tilde \b||\nabla \varphi_R| \de x\de s\nonumber  \\
& +\frac12\int_0^t\int_{\R^d}a_{ij}\partial_j\beta(u^\delta)\partial_i\varphi_R\de x \de s.\label{caccacacca}
\end{align}
The local integrability assumptions together with the bound on $(\dive \tilde\b)^-$ are sufficient for the convergence of the commutators. On the other hand, we use that the matrix $\a$ is representable as $a_{ij} = \sigma_{ik}\sigma_{jk}$, to estimate the last term in \eqref{caccacacca} as follows
\begin{align*}
\int_0^t\int_{\R^d}a_{ij}\partial_j\beta(u^\delta)\partial_i\varphi_R\de x \de s & = \int_0^t\int_{\R^d}\sigma_{ik}\sigma_{jk}\partial_ju^\delta \beta'(u^\delta)\partial_i\varphi_R\de x \de s\\
&\le \frac12 \int_0^t\int_{\R^d}|\sigma_{jk}\partial_ju^\delta|^2 |\partial_i\varphi_R|\de x \de s\\
&+\frac12 \int_0^t\int_{\R^d}|\sigma_{ik}|^2|\beta'(u^\delta)|^2|\partial_i\varphi_R|\de x \de s\\
&\le \frac{C}{R}\int_0^t\int_{\R^d}|\sigma_{jk}\partial_ju^\delta|^2\de x \de s\\
&+C\|u_0\|_\infty^2\int_0^t\int_{|x|>R}|\sigma_{ik}^1|^2 \de x \de s+\int_0^t\|\sigma_{ik}^2(s,\cdot)\|_\infty\int_{|x|>R}|u^\delta|^2\de x\de s,
\end{align*}
which goes to $0$ as $R\to\infty$. The conclusion follows along the same lines as in the proof of Theorem \ref{thm:esistenza unicita fp div}.
\end{proof}

Notice that our corollary works under assumptions of global integrability on the vector field $\tilde\b$, but we only assume that the negative part of the divergence is bounded. Remarkably, we can use this weaker assumption thanks to the convergence of the commutators in $L^2_tH^{-1}_x$. Assuming a bound on the full divergence we would then be able to consider the case of locally integrable vector fields, as done by LeBris-Lions, relying on the convergence in $L^1$ of the commutator $r^\delta$ (see Lemma \ref{lem:conv_comm}) and finding the very same proof.  
\begin{comment}
\paolo{Remark 16 a pag. 1302 di \cite{LBL_CPDE} ci sembra sbagliato: lo step di regolarizzazione (e quindi la convergenza del commutatore $r^\delta$) funziona a patto di richiedere qualcosa su $\dive\b$. Stando a quanto capiamo noi, LeBris-Lions affermano che la loro strategia funziona sostituendo le due ipotesi 
$$
\b\in L^2_tL^2_{x,loc}+\dive\b\in L^1_tL^\infty_x
$$
con l'ipotesi globale 
$$
\b\in L^2_tL^2_x.
$$
Ciò darebbe una dimostrazione del Corollario 3.5 senza alcuna ipotesi sulla divergenza.  
Questo ci sembra improbabile alla luce del Lemma 1 del loro lavoro (o Lemma \ref{lem:conv_comm} del nostro).
Ci sembra, invece, che l'unicita valga in due contesti leggermente diversi: 
\begin{enumerate}
    \item o si assume 
$$\b\in L^2_tL^2_{x,loc}+\dive\b\in L^1_tL^\infty_x
$$
\item oppure 
$$
\b\in L^2_tL^2_{x}+[\dive\b]^-\in L^1_tL^\infty_x
$$
e questi si rispecchiano precisamente in Theorem \ref{thm:esistenza unicita fp div} e Corollary \ref{cor:esistenza unicita fp div}.
\end{enumerate}
}
\end{comment}

\subsection{The Fokker-Planck equation} 

We start by giving the following definition.
\begin{defn}[Distributional solution]\label{def:weak_sol_fp}
Assume that 
\begin{equation}\label{eq:ass_distro_sol_FP}
\b\in L^1((0,T); L^p(\R^d)), \qquad \a\in L^1((0,T); L^p(\R^d; \R^{d\times d})), \qquad u_0\in L^q(\R^d)
\end{equation}
are given, with $p,q$ satisfying $\sfrac{1}{p} +\sfrac{1}{q} \leq 1$. A function $u\in L^{\infty}((0,T);L^q(\R^d))$ is a {\em distributional solution} to \eqref{eq:fp} if for any $\varphi\in C^\infty_c([0,T)\times\R^d)$ the following identity holds: 
	$$
	\int_0^T\int_{\R^d} u\bigg(\partial_t\varphi+\b\cdot\nabla\varphi+\frac{1}{2}\sum_{ij}	a_{ij}\partial_{ij}\varphi\bigg) \de x \de t+\int_{\R^d} u_0\varphi(0,\cdot) \de x=0.
	$$
\end{defn}
Notice that in the definition of distributional solutions the assumption that $p,q$ satisfy $\sfrac{1}{p} +\sfrac{1}{q} \leq 1$ is the minimum requirement we need in order to have $u \b, u\a\in L^1$ so that the definition makes sense.

We now recall the following definition.
\begin{defn}\label{def:equiint}
Let $\Omega$ be an open subset of $\R^d$. We say that a bounded family $\{\varphi_i\}_{i\in I}$ is equi-integrable if the following two conditions hold:
\begin{itemize}
    \item[$(i)$] For any $\e>0$ there exists a borel set $A\subset\Omega$ with finite measure such that
    $$
    \int_{\Omega\setminus A} |\varphi_i(x)|\de x\leq\e,
    $$
    for any $i\in I$.
    \item[$(ii)$] For any $\e>0$ there exists $\delta>0$ such that, for any Borel set $E\subset \Omega$ with $\mathscr{L}^d(E)\leq\delta$, there holds
    $$
    \int_E|\varphi_i(x)|\de x\leq \e,
    $$
    for any $i\in I$.
\end{itemize}
\end{defn}
Before to prove the existence of distributional solutions we recall the following characterization of equi-integrability, see \cite{BC} (or alternatively \cite{BCC} for a proof on the $d$-dimensional torus $\T^d$).
\begin{lem}\label{lem:equi-integrability}
Consider a family $\{\varphi_i\}_{i\in I}\subset L^1(\Omega)$ which is bounded in $L^1(\Omega)$. Then this family satisfies condition $(ii)$ of Definition \ref{def:equiint} if and only if for every $\e>0$, there exists a constant $C_\e>0$ %and a borel set $A_\e\subset\Omega$ with finite measure 
such that for every $i\in I$ we can write
$$
\varphi_i=\varphi_i^1+\varphi_i^2,
$$
with
\begin{equation}\label{decomposizione equiint}
    \|\varphi_i^1\|_{L^1(\Omega)}\le \e,%\qquad \mathrm{supp}\,\varphi_i^2\subset A_\e,
    \qquad\|\varphi_i^2\|_{L^2(\Omega)}\le C_\e, \,\,\mbox{for all } i\in I. 
\end{equation}
\end{lem}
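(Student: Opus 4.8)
The plan is to prove the two implications of this classical equivalence separately. For the \emph{non-trivial direction}, assume that the family $\{\varphi_i\}_{i\in I}$ satisfies condition $(ii)$ of Definition \ref{def:equiint}. Fix $\e>0$ and use the hypothesis to obtain $\delta>0$ so that $\int_E|\varphi_i|\,\de x\le\e/2$ whenever $\mathscr{L}^d(E)\le\delta$, uniformly in $i$. The idea is to split each $\varphi_i$ according to a truncation at a fixed (large) level $\lambda>0$: set
\[
\varphi_i^1:=\varphi_i\,\mathbbm{1}_{\{|\varphi_i|>\lambda\}},\qquad
\varphi_i^2:=\varphi_i\,\mathbbm{1}_{\{|\varphi_i|\le\lambda\}}.
\]
Then $\|\varphi_i^2\|_{L^2(\Omega)}^2\le\lambda\|\varphi_i\|_{L^1(\Omega)}\le\lambda\,\sup_{i\in I}\|\varphi_i\|_{L^1}=:C_\e^2$, so $\varphi_i^2$ is bounded in $L^2$ as required. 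For the $L^1$-smallness of $\varphi_i^1$, apply Chebyshev's inequality to get $\mathscr{L}^d(\{|\varphi_i|>\lambda\})\le\|\varphi_i\|_{L^1}/\lambda\le M/\lambda$, where $M:=\sup_{i\in I}\|\varphi_i\|_{L^1}$; choosing $\lambda$ large enough that $M/\lambda\le\delta$ forces the super-level set to have measure $\le\delta$, hence $\|\varphi_i^1\|_{L^1(\Omega)}=\int_{\{|\varphi_i|>\lambda\}}|\varphi_i|\,\de x\le\e/2\le\e$ by the choice of $\delta$. This produces exactly the decomposition \eqref{decomposizione equiint}, with $\lambda$ (and hence $C_\e$) depending only on $\e$, $M$ and the equi-integrability modulus, but not on $i$.

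For the \emph{converse direction}, suppose every $\varphi_i$ admits a splitting $\varphi_i=\varphi_i^1+\varphi_i^2$ with $\|\varphi_i^1\|_{L^1(\Omega)}\le\e$ and $\|\varphi_i^2\|_{L^2(\Omega)}\le C_\e$. Given any Borel set $E\subset\Omega$, estimate by the triangle inequality and H\"older's inequality
\[
\int_E|\varphi_i|\,\de x\le\int_E|\varphi_i^1|\,\de x+\int_E|\varphi_i^2|\,\de x\le\e+\mathscr{L}^d(E)^{1/2}\,\|\varphi_i^2\|_{L^2(\Omega)}\le\e+C_\e\,\mathscr{L}^d(E)^{1/2}.
\]
Thus, given a target $\e'>0$, first apply the hypothesis with $\e=\e'/2$ to fix $C_{\e'/2}$, then choose $\delta>0$ with $C_{\e'/2}\,\delta^{1/2}\le\e'/2$; for any $E$ with $\mathscr{L}^d(E)\le\delta$ we obtain $\int_E|\varphi_i|\,\de x\le\e'$ for all $i\in I$, which is precisely condition $(ii)$.

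The only mildly delicate point is bookkeeping the dependence of constants: one must check that the level $\lambda$ in the first implication can be chosen uniformly in $i\in I$, which is guaranteed by the assumed $L^1$-boundedness of the family (the quantity $M=\sup_i\|\varphi_i\|_{L^1}$ is finite), and that in the converse the order of quantifiers — first fix the $L^1/L^2$ decomposition scale, then fix $\delta$ — is respected. Both implications are otherwise routine applications of Chebyshev and H\"older, so there is no substantial obstacle; the statement is essentially a quantitative restatement of the de la Vall\'ee–Poussin-type criterion, and the proof is the standard truncation argument.
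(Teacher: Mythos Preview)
Your proof is correct and is the standard truncation argument (split at a super-level set, control the high part via Chebyshev and equi-integrability, control the truncated part in $L^2$ via $|\varphi_i|^2\le\lambda|\varphi_i|$ on $\{|\varphi_i|\le\lambda\}$); the converse via H\"older is likewise routine and correctly handled. Note that the paper does not actually prove this lemma: it is stated without proof and the reader is referred to \cite{BC} (and \cite{BCC} for the torus case), so there is no in-paper argument to compare against --- your proof is precisely the classical one those references contain.
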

The proof of existence of distributional solutions immediately follows from a classical a priori estimate. 
\begin{proposition}[Existence of distributional solutions]\label{prop:existence_weak_sol_2}
Let $\b,\a$ and $u_0$ be as in \eqref{eq:ass_distro_sol_FP} with $p,q$ such that $\sfrac{1}{p} +\sfrac{1}{q} \leq 1$ and let \ref{assu:dive_a_and_b} and \ref{assu:elliptic} hold and assume that $(\dive \tilde \b)^- \in L^1((0,T);L^\infty(\R^d))$. Then there exists a distributional solution $u\in L^{\infty}((0,T);L^q(\R^d))$ to \eqref{eq:fp}.
\end{proposition}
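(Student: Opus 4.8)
The plan is to obtain $u$ as a weak-$*$ limit of solutions of regularised problems, in the spirit of the energy estimate recalled in the introduction. First I would fix a family of mollifiers $(\rho^\delta)_\delta$ as in \eqref{def:moll} and set $\b^\delta:=\b*\rho^\delta$, $\a^\delta:=\a*\rho^\delta$, $\tilde\b^\delta:=\tilde\b*\rho^\delta$ and $u_0^\delta:=u_0*\rho^\delta$; here $\tilde\b\in L^1_tL^p_x+L^\infty_{t,x}$ is well defined thanks to \ref{assu:dive_a_and_b}, and $\dive\tilde\b^\delta=(\dive\tilde\b)*\rho^\delta$. Mollification preserves \ref{assu:elliptic} with the same constant $\alpha$ and makes all data smooth and bounded, so the Cauchy problem
\begin{equation*}
\partial_t u^\delta+\dive(\b^\delta u^\delta)-\tfrac12\partial_{ij}(a^\delta_{ij} u^\delta)=0,\qquad u^\delta\vert_{t=0}=u_0^\delta,
\end{equation*}
is a uniformly parabolic linear equation with smooth coefficients, hence classically well-posed; let $u^\delta$ denote its unique smooth solution.

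Second, I would derive a bound on $u^\delta$ uniform in $\delta$. Rewriting the equation in divergence form and testing it against $\beta'(u^\delta)$ for a smooth convex function $\beta$, an integration by parts shows that the second-order term contributes $-\tfrac12\int a^\delta_{ij}\partial_i u^\delta\,\partial_j u^\delta\,\beta''(u^\delta)\le0$ (using \ref{assu:elliptic} and $\beta''\ge0$), while the first-order term equals $-\int(\dive\tilde\b^\delta)\,\gamma(u^\delta)$ with $\gamma(s):=\int_0^s t\,\beta''(t)\,\de t\ge0$. Since $s\mapsto s^-$ is convex, Jensen's inequality gives $\|(\dive\tilde\b^\delta(t,\cdot))^-\|_{L^\infty}\le\|(\dive\tilde\b(t,\cdot))^-\|_{L^\infty}$ for a.e.\ $t$; choosing $\beta(s)=|s|^q$, so that $\gamma=(q-1)\beta$, and applying Gronwall's lemma yields
\begin{equation*}
\|u^\delta\|_{L^\infty((0,T);L^q(\R^d))}^q\le\|u_0\|_{L^q(\R^d)}^q\exp\!\big((q-1)\|(\dive\tilde\b)^-\|_{L^1((0,T);L^\infty(\R^d))}\big),
\end{equation*}
uniformly in $\delta$ (the endpoint $q=\infty$ being the maximum-principle version of the same computation, and $q=1$ being immediate since the $L^1$ norm is non-increasing).

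Third, I would pass to the limit. If $q>1$ (including $q=\infty$, with $q'=1$) then $L^\infty((0,T);L^q(\R^d))=\big(L^1((0,T);L^{q'}(\R^d))\big)^*$, so along a subsequence $u^\delta\weaktos u$ in this space. If $q=1$ one instead propagates, via the estimate above applied to a de la Vall\'ee Poussin convex function $\beta$ adapted to the equi-integrable family $\{u_0^\delta\}$, a bound $\sup_\delta\int\beta(u^\delta(t,\cdot))\,\de x<\infty$ uniform in $t$, which through Lemma \ref{lem:equi-integrability} yields equi-integrability of $\{u^\delta\}$ and hence weak $L^1$ compactness. It then remains to pass to the limit in the identity of Definition \ref{def:weak_sol_fp}: in the terms $\int_0^T\!\int_{\R^d}u^\delta(\b^\delta\cdot\nabla\varphi)\,\de x\,\de t$ and $\int_0^T\!\int_{\R^d}u^\delta\,a^\delta_{ij}\partial_{ij}\varphi\,\de x\,\de t$ the function $u^\delta$ is paired with a factor that converges strongly in $L^1((0,T);L^{q'}(\R^d))$ --- here $\tfrac1p+\tfrac1q\le1$ is exactly what lets the compactly supported smooth functions $\nabla\varphi$ and $\partial_{ij}\varphi$ bridge the gap between $L^p$ and $L^{q'}$ --- while $u_0^\delta\to u_0$ in $L^q(\R^d)$. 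Combining the weak-$*$ (resp.\ weak $L^1$) convergence of $u^\delta$ with the strong convergence of these factors, and crucially using the linearity of \eqref{eq:fp} so that no quadratic term survives in the limit, every term converges and the limit $u$ is a distributional solution.

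The computations above are routine; the genuinely delicate point is the case $q=1$, where weak-$*$ compactness in $L^\infty((0,T);L^1(\R^d))$ is not available and one has to produce equi-integrability by hand. This is exactly where the de la Vall\'ee Poussin characterisation recorded in Lemma \ref{lem:equi-integrability} is used, together with the fact that the renormalised energy estimate above holds for \emph{every} convex $\beta$ because the diffusion term always keeps its favourable sign ($\beta''\ge0$).
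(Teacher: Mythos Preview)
For $q>1$ your argument coincides with the paper's. The difference, and the gap, is in the case $q=1$.

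Your plan there is to propagate a de la Vall\'ee Poussin bound $\sup_\delta\int\beta(u^\delta(t,\cdot))\,\de x<\infty$ via the renormalised energy identity. But that identity reads
\[
\frac{\de}{\de t}\int_{\R^d}\beta(u^\delta)\,\de x\;\le\;\|(\dive\tilde\b^\delta)^-(t,\cdot)\|_{L^\infty}\int_{\R^d}\gamma(u^\delta)\,\de x,
\qquad \gamma(s)=s\beta'(s)-\beta(s),
\]
and Gronwall closes only if $\gamma\le C\beta$, i.e.\ if $\beta$ satisfies a $\Delta_2$-type bound $s\beta'(s)\le(C{+}1)\beta(s)$. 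A generic superlinear convex $\beta$ furnished by de la Vall\'ee Poussin need not have this property, and you do not explain how to choose one that does. The paper bypasses the issue entirely: rather than a nonlinear $\beta$, it uses Lemma~\ref{lem:equi-integrability} as a \emph{decomposition} tool, writing $u_0^\delta=u_{0,1}^\delta+u_{0,2}^\delta$ with $\|u_{0,1}^\delta\|_{L^1}\le\e$ and $\|u_{0,2}^\delta\|_{L^2}\le C_\e$, solving the two Cauchy problems separately, and applying the already-available linear $L^1$ and $L^2$ estimates; linearity then reassembles $u^\delta$ in the form~\eqref{decomposizione equiint}.

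There is a second, more structural gap. Even granting your bound, it yields only condition~(ii) of Definition~\ref{def:equiint}; on $\R^d$ one also needs tightness (condition~(i)), which you do not address at all and which Lemma~\ref{lem:equi-integrability} does not provide. The paper handles the tails by a separate cutoff computation, testing against $\beta_\e'(u^\delta)\psi_r^R$ with $\beta_\e(s)\to|s|$ and a two-scale cutoff $\psi_r^R$. Finally, weak $L^1((0,T)\times\R^d)$ compactness does not by itself place the limit in $L^\infty((0,T);L^1(\R^d))$; the paper closes this last step via weak-$*$ convergence in $L^\infty((0,T);\mathcal{M}(\R^d))$ combined with a separability argument in $C_0(\R^d)$.
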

\begin{proof}
Let $(\rho^\delta)_\delta$ be a standard family of mollifiers as in \eqref{def:moll} and let us define $\b^\delta:=\b*\rho^\delta$, $\a^\delta:=\a*\rho^\delta$, and $u_0^\delta=u_0*\rho^\delta$. Then, we consider the approximating problem
	\begin{equation}\label{eq:ad_bae}
	\begin{cases}
	\partial_t u^\delta+\dive(\b^\delta u^\delta)-\frac{1}{2} \sum_{i,j}\partial_{ij}(a_{ij}^\delta u^\delta)=0,\\
	u^\delta(0,\cdot)=u_0^\delta. 
	\end{cases}
	\end{equation}
The existence of a unique solution of \eqref{eq:ad_bae} follows from classical theory \cite{EV}. 
\begin{comment}\paolo{Boh a me non viene in mente nessuna ref per problemi così generali... controllare Evans.}\gennaro{Si potrebbe dire che unicit\`a viene da stime a priori con coeff smooth+linearit\`a, mentre esistenza viene da formula di Feynmann Kac usando la soluzione di
\begin{equation}
    \de X=\b(t,X)\de t+ \sqrt{\a(t,X)}\de W_t, \,\,X_0=x
\end{equation}
dove va chiarito chi \`e $\sqrt{\a(t,X)}$: $\a$ simmetrica e definita positiva allora $\a=\frac{1}{2}\bsigma \bsigma^T$ e $\sqrt{\a}=\bsigma$}.
\end{comment}
It is readily checked that the sequence $u^\delta$ is equi-bounded in $L^\infty((0,T);L^q(\T^d))$. Indeed, from the smoothness of the objects in \eqref{eq:ad_bae}, we can rewrite it in divergence form \begin{equation}\label{eq:ad_bae-div}
\partial_t u^\delta + \dive(\tilde{\b}^\delta u^\delta) -\frac{1}{2} \sum_{i,j}\partial_{i}(a_{ij}^\delta\partial_j u^\delta)=0,
\end{equation}
and then we can multiply the equation by $\beta'(u^\delta)$, where $\beta \colon \R \to \R$ is a smooth, convex function: by an easy application of the chain rule and by integrating in space we obtain (for fixed $t$)
\begin{align*}
\int_{\R^d}\dive(\tilde{\b}^\delta u^\delta)\beta'(u^\delta)\de x&=\int_{\R^d} u^\delta \beta'(u^\delta)\dive\tilde{\b}^\delta\de x+\int_{\R^d}\tilde{\b}^\delta\cdot\nabla\beta(u^\delta)\de x\\
&=\int_{\R^d} u^\delta\beta'(u^\delta) \dive\tilde{\b}^\delta\de x + \int_{\R^d}\dive(\tilde{\b}^\delta \beta(u^\delta))\de x-\int_{\R^d}\dive\tilde{\b}^\delta\beta(u^\delta)\de x,
\end{align*}
and
\begin{equation*}
\int_{\R^d}\partial_i(a_{ij}^\delta\partial_j u^\delta)\beta'(u^\delta)\de x =-\int_{\R^d} a_{ij}^\delta\beta''(u^\delta)\partial_i u^\delta\partial_j u^\delta\de x.
\end{equation*}
By considering a sequence of smooth, convex functions, uniformly convergent to $\beta(s) = |s|^q$, for $1 < q < \infty$, the following uniform bounds on the $L^q$-norm of the solutions $u^\delta$:
\begin{align}
\frac{\de}{\de t}\|u^\delta(t,\cdot)\|_{L^q}^q&\leq -(q-1)\int_{\R^d}\dive\tilde{\b}^\delta(t,x)|u^\delta(t,x)|^q\de x-\alpha\int_{\R^d}|\nabla u^\delta(t,x)|^2\beta''(u^\delta(t,x))\de x \label{est:matr-1}\\
&\leq (q-1)\|(\dive\tilde{\b}^\delta)^-(t,\cdot)\|_{\infty}\|u^\delta(t,\cdot)\|_{L^q}^q,\label{est:matr-2}
\end{align}
where in \eqref{est:matr-1} we have used that, by standard properties of mollifiers, $\a^\delta$ is uniformly elliptic with the same constant $\alpha$ as in \ref{assu:elliptic}. Notice that $\|(\dive\tilde{\b}^\delta)^-\|_{L^1L^\infty}$ is uniformly bounded (in $\delta$) by \ref{assu:dive_a_and_b}. The conclusion now follows from an application of Gronwall's lemma: we have that
\begin{equation}\label{eq:diss_norm}
\sup_{t\in(0,T)}\|u^\delta(t,\cdot)\|_{L^q(\R^d)}\leq\|u_0\|_{L^q(\R^d)}e^{C(q)\|(\dive\tilde \b)^-\|_{L^1L^\infty}},
\end{equation}
with $C(q)=\frac{q-1}{q}$. Thus, for $q>1$ by standard compactness arguments, we can extract a subsequence which converges weakly-* to a function $u\in L^\infty((0,T);L^q(\R^d))$ and it is immediate to deduce that $u$ is a distributional solution of \eqref{eq:fp} because of the linearity of the equation. For $q=\infty$, the estimate \eqref{eq:diss_norm} still holds for every $\delta>0$: we send $q\to \infty$ in \eqref{eq:diss_norm} and then we can conclude as in the previous case. For the case $q=1$, 
%the compactness in $L^\infty((0,T);L^1(\T^d))$ can be obtained as a consequence of equi-integrability of the family $(u^\delta)_\delta$ which follows from \paolo{riportare dimostrazione dato che è un survey...}
%\gennaro{https://djalil.chafai.net/blog/2014/03/09/de-la-vallee-poussin-on-uniform-integrability/}
the estimate is not sufficient to obtain weak compactness in $L^1$, as we need to show the equi-integrability of the family $(u^\delta)_{\delta>0}$. We first discuss the equi-integrability on small sets: let $u_0\in L^1(\R^d)$ and define $u_0^\delta$ as above. Since $u_0^\delta \to u_0$ strongly in $L^1(\R^d)$, it converges weakly and thus it is equi-integrable. Then, we apply Lemma \ref{lem:equi-integrability} to the family $u_0^\delta$: let $\e>0$ be fixed, there exists a constant $C_\e>0$ such that
$$
u_0^\delta=u_{0,1}^\delta+u_{0,2}^\delta,
$$
with
$$
\|u_{0,1}^\delta\|_{L^1}\leq \e,\qquad \|u_{0,2}^\delta\|_{L^2}\leq C_\e.
$$
We define $u_1^\delta$ and $u_2^\delta$ to be the solutions of \eqref{eq:ad_bae} with initial datum, respectively, $u_{0,1}^\delta$ and $u_{0,2}^\delta$. Thus, the estimate in \eqref{eq:diss_norm} with $q=1,2$ implies that
\begin{align}
\sup_{t\in(0,T)}\|u^\delta_1(t,\cdot)\|_{L^1(\R^d)}&\leq\e e^{C\|(\dive\tilde \b)^-\|_{L^1L^\infty}},\\
\sup_{t\in(0,T)}\|u^\delta_2(t,\cdot)\|_{L^2(\R^d)}&\leq C_\e e^{C\|(\dive\tilde \b)^-\|_{L^1L^\infty}}.
\end{align}
The linearity of the equations implies that the function $u^\delta=u_1^\delta+u_2^\delta$ is the unique solution of \eqref{eq:ad_bae} with initial datum $u_0^\delta$ and it admits a decomposition as in \eqref{decomposizione equiint}. Then, we can infer that the sequence $u^\delta$ satisfies $(ii)$ of Definition \ref{def:equiint}.

    We now verify the condition on the tails, i.e. $(i)$ in Definition \ref{def:equiint}: let $r,R>0$ such that $2r<R$ and consider a positive test function $\psi_r^R$ such that
    \begin{equation}
    \begin{cases}
        0 \qquad &\mbox{if }0<|x|<r,\\
        1 &\mbox{if }2r<|x|<R,\\
        0 &\mbox{if }|x|>R,
    \end{cases}
    \end{equation}
    such that $0\leq \psi_r^R\leq 1$ and 
    \begin{equation}
        |\nabla\psi_r^R|\leq\frac{C}{r},\qquad |\nabla^2\psi_r^R|\leq\frac{C}{r^2}.
    \end{equation}
    Then consider a smooth convex function $\beta_\e$ converging uniformly to $\beta(s)=|s|$ (for example we can take $\beta_\e(s)=\sqrt{s^2+\e^2}-\e$) and multiply the equation \eqref{eq:ad_bae-div} by $\beta'_\e(|u^\delta(t,x)|)\psi_r^R(x)$: integrating by parts we get that
    \begin{align*}
    \frac{d}{dt} \int_{\R^d} \beta_\e(|u^\delta(t,x)|)\psi_r^R (x)\, \de x  &+ \int_{\R^d} \dive \b^\delta(t,x)\big(u^\delta(t,x)  \beta_\e'(|u^\delta(t,x)|)- \beta_\e(|u^\delta(t,x)|)\big)\psi_r^R(x) \, \de x \\
    &-\int_{\R^d}\beta_\e(u^\delta(t,x))\b(t,x)\cdot\nabla \psi_r^R(x)\,\de x\\
    &+\frac{1}{2}\sum_{ij} \int_{\R^d} a_{ij}(t,x)\partial_j u^\delta(t,x) \partial_i u^\delta(t,x) \beta_\e^{\prime\prime}(|u^\delta(t,x)|) \psi_r^R(x)\, \de x \\
    &+\frac{1}{2}\sum_{ij} \int_{\R^d} a_{ij}(t,x)\partial_j \beta_\e(|u^\delta(t,x)|) \partial_i\psi_r^R(x)\, \de x= 0.      
    \end{align*}
    Then we estimate the fourth term above using the convexity of $\beta_\e$ together with \ref{assu:elliptic}, while we do a further integration by parts in the last term to obtain that
    \begin{align*}
    \frac{d}{dt} \int_{\R^d} \beta_\e(|u^\delta(t,x)|)\psi_r^R (x)\, \de x  &\leq \int_{\R^d} \dive \b^\delta(t,x)\big(u^\delta(t,x)  \beta_\e'(|u^\delta(t,x)|)- \beta_\e(|u^\delta(t,x)|)\big)\psi_r^R(x) \, \de x \\
    &+\int_{\R^d}|\beta_\e(u^\delta(t,x))||\b(t,x)||\nabla \psi_r^R(x)|\,\de x\\
    &+\frac{1}{2}\sum_{ij} \int_{\R^d} |\partial_j a_{ij}(t,x)| \beta_\e(|u^\delta(t,x)|) |\partial_i\psi_r^R(x)|\, \de x\\
    &+\frac{1}{2}\sum_{ij} \int_{\R^d} |a_{ij}(t,x)| \beta_\e(|u^\delta(t,x)|) |\partial_{ij}\psi_r^R(x)|\, \de x.
    \end{align*}
    Let $t\in (0,T)$ be fixed, we integrate on $(0,t)$ and by letting $\e\to 0$ we have that
    $$
    u^\delta(t,x)  \beta_\e'(|u^\delta(t,x)|)- \beta_\e(|u^\delta(t,x)|)\to 0,
    $$
    leading to the inequality
    \begin{align*}
    \int_{\R^d} |u^\delta(t,x)|\psi_r^R (x)\, \de x  &\leq \int_{\R^d} |u^\delta_0(x)|\psi_r^R (x)\, \de x+\int_{\R^d}|u^\delta(t,x)||\b(t,x)||\nabla \psi_r^R(x)|\,\de x\\
    &+\frac{1}{2}\sum_{ij} \int_{\R^d} |\partial_j a_{ij}(t,x)||u^\delta(t,x)| |\partial_i\psi_r^R(x)|\, \de x\\
    &+\frac{1}{2}\sum_{ij} \int_{\R^d} |a_{ij}(t,x)| |u^\delta(t,x)| |\partial_{ij}\psi_r^R(x)|\, \de x.
    \end{align*}
    Finally, we use the definition of $\psi_r^R$ and by letting $R\to \infty$ we obtain
    \begin{align*}
    \int_{|x|>r} |u^\delta(t,x)|\, \de x  &\leq \int_{|x|>r} |u^\delta_0(x)|\, \de x+C\bigg(\frac{\|\b\|_\infty+\sum_{ij}\|\partial_j a_{ij}\|_\infty}{r}\bigg)\int_{|x|>r}|u^\delta(t,x)|\,\de x\\
    &+C\frac{\sum_{ij}\|a_{ij}\|_\infty}{r^2}\int_{|x|>r}|u^\delta(t,x)|\,\de x.
    \end{align*}
    Note that the constant $C$ above does not depend on $\delta$ and $t$. The conclusion follows by using the equi-integrability of $u_0^\delta$ and the uniform bound on $\|u^\delta\|_{L^\infty L^1}$.
    So far we have shown that the sequence $\{u^\delta\}_{\delta>0}\subset L^1((0,T)\times\R^d)$ is uniformly integrable. This implies that there exists a function $u\in L^1((0,T)\times\R^d)$ such that, up to a subsequence, the following convergence holds
    $$
    u^\delta\weakto u \qquad\mbox{in }L^1((0,T)\times\R^d).
    $$
    In order to conclude we need to check that $u^\delta\weaktos u$ in $L^\infty L^1$. Note that the sequence $u^\delta$ satisfies the inequality \eqref{eq:diss_norm} with $q=1$, which implies that
    $$
    u^\delta\weaktos u\qquad \mbox{in }L^\infty((0,T);\mathcal{M}(\R^d)).
    $$
    Then, for any $\psi\in C^\infty_c([0,T))$ and $\phi\in C_0(\R^d)$ we have that
\begin{equation}
\int_0^T\psi(t)\int_{\R^d}\phi(x)\de u_t(x)\de t=\int_0^T\psi(t)\int_{\R^d}\phi(x) u(t,x)\de x\de t.
\end{equation}
We can conclude that for a.e. $t\in (0,T)$ we have that
\begin{equation}\label{uguale limite L1}
\int_{\R^d}\phi(x)\de u_t(x)=\int_{\R^d}\phi(x) u(t,x)\de x,
\end{equation}
which implies that $u_t$ is absolutely continuous with respect to the Lebesgue measure and
$$
\|u_t\|_{\mathcal{M}}=\|u(t,\cdot)\|_{L^1},\qquad \mbox{for a.e. } t\in (0,T).
$$
Note that is crucial that the space $C_0(\R^d)$ is separable, because the set of zero measure where \eqref{uguale limite L1} does not hold may depend on $\phi$. Finally, we can infer that $u\in L^\infty((0,T);L^1(\R^d))$ and therefore together with the weak convergence in $L^1((0,T)\times\R^d)$ we can conclude that
$$
u^\delta\weaktos u,\qquad\mbox{in }L^\infty((0,T);L^1(\R^d)),
$$
and this concludes the proof.
\end{proof}

A special sub-class of distributional solutions is given by \emph{parabolic solutions}, which are weakly differentiable in the space variable. This notion of solution is natural for vector fields and diffusion matrices possessing enough integrability in the space variable. 

\begin{definition}
	Assume that 
	\begin{equation}\label{eq:ass_para_sol_FP}
		\b\in L^1((0,T); L^2(\R^d;\R^d)), \qquad \a\in L^2((0,T); L^2(\R^d; \R^{d\times d})), \qquad u_0\in L^2(\R^d).
	\end{equation}
	A function $u\in L^{\infty}((0,T);L^2(\R^d))$ is a {\em parabolic solution} to \eqref{eq:fp} if it is a distributional solution to \eqref{eq:fp} and furthermore $u\in L^2((0,T);H^1(\R^d))$.
\end{definition}

We refer to the space $L^2((0,T);H^1(\R^d))$ as the \emph{parabolic class}. We recall that the Fokker-Planck equation \eqref{eq:fp} can be formally rewritten in divergence-form as
\begin{equation}\tag{FP-div}\label{eq:fp-div}
\begin{cases}
\partial_t u + \dive(\tilde{\b} u) -\frac{1}{2} \sum_{i,j}\partial_{i}(a_{ij}\partial_j u)=0 &  \text{ in } (0,T) \times \R^d, \\
u\vert_{t=0}=u_0 & \text{ in } \R^d,
\end{cases}
\end{equation}
by defining the vector field $\tilde{\b}:=\b-\frac{1}{2}\sum_j\partial_j a_{ij}$. The following result holds.
\begin{prop}\label{prop:equivalence}
Assume that $\a$ satisfies \ref{assu:dive_a_and_b}. Then, $u$ is a parabolic solution of \eqref{eq:fp} if and only if $u$ is a parabolic solution of \eqref{eq:divFP} with vector field $\tilde\b$.
\end{prop}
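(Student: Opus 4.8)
The plan is to show that, for a function in the parabolic class $u\in L^\infty((0,T);L^2(\R^d))\cap L^2((0,T);H^1(\R^d))$, the distributional formulation of \eqref{eq:fp} (Definition \ref{def:weak_sol_fp}) and the weak formulation of \eqref{eq:divFP} with drift $\tilde\b$ (Definition \ref{def:weak-solution}) are literally the same identity. Both notions postulate exactly this regularity and both carry the same initial-datum term $\int_{\R^d}u_0\varphi(0,\cdot)\,\de x$, so it suffices to prove that for every $\varphi\in C^\infty_c([0,T)\times\R^d)$ one has
\begin{equation}\label{eq:IBP_equiv}
\begin{split}
\frac12\sum_{i,j}\int_0^T\int_{\R^d} u\,a_{ij}\,\partial_{ij}\varphi\,\de x\,\de t &= -\frac12\sum_{i,j}\int_0^T\int_{\R^d} a_{ij}\,\partial_j u\,\partial_i\varphi\,\de x\,\de t\\
&\quad -\frac12\sum_{i}\int_0^T\int_{\R^d} u\,\Big(\sum_{j}\partial_j a_{ij}\Big)\partial_i\varphi\,\de x\,\de t .
\end{split}
\end{equation}
Indeed, inserting the right-hand side of \eqref{eq:IBP_equiv} into the weak formulation of \eqref{eq:fp} and recalling that $-\frac12\sum_j\partial_j a_{ij}=\tilde b_i-b_i$ by the definition of $\tilde\b$ in \eqref{def: tilde b intro}, the two contributions $\pm\int u\,\b\cdot\nabla\varphi$ cancel and what remains is exactly the weak formulation of \eqref{eq:divFP}; since every step is an equality, the converse implication is obtained by reading the computation backwards.

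To establish \eqref{eq:IBP_equiv} I would mollify the diffusion matrix in the space variable alone, $\a^\eta:=\a*\rho^\eta$ with $\rho^\eta$ as in \eqref{def:moll}. For fixed $\eta>0$ each $a_{ij}^\eta$ is smooth, so $u\,a_{ij}^\eta\in L^2((0,T);H^1_\mathrm{loc}(\R^d))$, the Leibniz rule applies, and an integration by parts in $x_j$ yields \eqref{eq:IBP_equiv} with $\a$ replaced by $\a^\eta$. One then lets $\eta\to 0$: on a bounded set $Q\supset\mathrm{supp}\,\varphi$ the functions $u\,\partial_i\varphi$ and $(\partial_j u)\,\partial_i\varphi$ belong to $L^2([0,T]\times Q)$ because $u\in L^\infty_tL^2_x$ and $\nabla u\in L^2_{t,x}$, whereas $a_{ij}^\eta\to a_{ij}$ in $L^2([0,T]\times Q)$ since $\a\in L^2_{t,x}$; this handles the term on the left and the first term on the right. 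For the last term one uses that $\sum_j\partial_j a_{ij}^\eta=\big(\sum_j\partial_j a_{ij}\big)*\rho^\eta$ converges a.e.\ to $\sum_j\partial_j a_{ij}$ and satisfies $\|\sum_j\partial_j a_{ij}^\eta\|_{L^\infty_{t,x}}\le\|\sum_j\partial_j a_{ij}\|_{L^\infty_{t,x}}$ by \ref{assu:dive_a_and_b}, so that dominated convergence applies against the fixed $L^1([0,T]\times Q)$ density $u\,\partial_i\varphi$.

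The one genuinely delicate point is this last limit, and it pinpoints the role of \ref{assu:dive_a_and_b}: under that hypothesis the single derivatives $\partial_k a_{ij}$ need not be functions, and only the row-divergences $\sum_j\partial_j a_{ij}$---precisely the quantities appearing in $\tilde\b$---are controlled, in $L^\infty$. This forces the integration by parts in \eqref{eq:IBP_equiv} to be organised so that, once $\a^\eta$ has been differentiated, only $\sum_j\partial_j a_{ij}^\eta$ ever occurs; moving the derivative $\partial_j$ (and not $\partial_i$) onto the product $u\,a_{ij}^\eta$ does exactly that, and since no transpose of $\a$ is ever produced the argument needs no symmetry assumption on the diffusion matrix. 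Finally, the integrability built into the two definitions together with \ref{assu:dive_a_and_b} guarantees that every product occurring in either weak formulation is locally integrable on $[0,T]\times\R^d$, so both identities are meaningful and \eqref{eq:IBP_equiv} is exactly the bridge linking them.
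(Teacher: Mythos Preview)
Your proof is correct and follows the same route as the paper: both reduce the claim to the single integration-by-parts identity linking $\int u\,a_{ij}\,\partial_{ij}\varphi$ to $-\int(a_{ij}\partial_j u+\partial_j a_{ij}\,u)\partial_i\varphi$, then absorb the $\sum_j\partial_j a_{ij}$ term into $\tilde\b$. The paper simply records this identity, whereas you supply the rigorous justification via mollification of $\a$ and carefully isolate the role of \ref{assu:dive_a_and_b} in the limit; a minor quibble is that $u\,a_{ij}^\eta$ need only lie in $L^1_tW^{1,1}_{x,\mathrm{loc}}$ rather than $L^2_tH^1_{x,\mathrm{loc}}$ (since $a_{ij}^\eta\in L^2_tL^\infty_{x,\mathrm{loc}}$ against $\nabla u\in L^2_{t,x}$ gives only $L^1_t$), but this is harmless for the integration by parts.
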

\begin{proof}
The proof is rather straightforward and relies on a simple integration by parts which makes use of the regularity assumption $u\in L^2((0,T);H^1(\R^d))$. In fact one can show that
\begin{align*}
\int_0^T\int_{\R^d}\bigg(u\b\cdot\nabla\varphi+\frac{1}{2}\sum_{ij}a_{ij}u\partial_{ij}\varphi\bigg) \de x \de t&=\int_0^T\int_{\R^d}\bigg(u\b\cdot\nabla\varphi-\frac{1}{2}\sum_{ij}(\partial_j a_{ij}u+a_{ij}\partial _j u)\partial_{i}\varphi\bigg) \de x \de t\\
&=\int_0^T\int_{\R^d}\bigg(u\tilde\b\cdot\nabla\varphi-\frac{1}{2}\sum_{ij}a_{ij}\partial _j u\partial_{i}\varphi\bigg) \de x \de t,
\end{align*}
where in the last line we used the definition of $\tilde \b$, i.e. 
$
\tilde \b=\b-\frac{1}{2}\sum_j\partial_j a_{ij}.
$
\end{proof}
In some situations it is convenient to use the formulation \eqref{eq:fp-div} since,  roughly speaking, we have to take into account one derivative less on $a_{ij}$. We will return on this point below. Under our assumptions, there exists at least one solution in the parabolic class: the result is the following.

\begin{proposition}[Existence of parabolic solutions]\label{prop:existence_parabolic_class_2}
	Let $\a,\b$ and $u_0$ be as in \eqref{eq:ass_para_sol_FP}, let \ref{assu:dive_a_and_b}, \ref{assu:elliptic} hold and assume that $(\dive \tilde \b)^- \in L^1((0,T);L^\infty(\R^d))$. Then there exists at least one parabolic solution to \eqref{eq:fp}.
\end{proposition}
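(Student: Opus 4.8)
The plan is to reduce to the divergence-form equation and then run the usual vanishing-regularization scheme, keeping only the existence half of the argument used in the proof of Theorem~\ref{thm:esistenza unicita fp div} (no growth condition on $\tilde\b$ is needed since we are not after uniqueness). Indeed, since \ref{assu:dive_a_and_b} holds, Proposition~\ref{prop:equivalence} guarantees that it is enough to produce a parabolic solution of \eqref{eq:divFP} with drift $\tilde\b = \b - \tfrac12\sum_j\partial_j a_{ij}$, which by \eqref{eq:ass_para_sol_FP} and \ref{assu:dive_a_and_b} belongs to $L^1((0,T);L^2(\R^d)) + L^\infty((0,T)\times\R^d) \subset L^1((0,T);L^2_\mathrm{loc}(\R^d))$.

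The first step is the regularization. Let $(\rho^\delta)_\delta$ be the mollifiers of \eqref{def:moll} and set $\tilde\b^\delta := \tilde\b*\rho^\delta$, $a_{ij}^\delta := a_{ij}*\rho^\delta$ and $u_0^\delta := u_0*\rho^\delta$. Mollification preserves uniform ellipticity with the same constant $\alpha$ of \ref{assu:elliptic}, and since $\dive\tilde\b^\delta = (\dive\tilde\b)*\rho^\delta$, Jensen's inequality (convexity of $z\mapsto z^-$) gives $\|(\dive\tilde\b^\delta)^-(t,\cdot)\|_{L^\infty}\le\|(\dive\tilde\b)^-(t,\cdot)\|_{L^\infty}$ for a.e.\ $t$, so $\|(\dive\tilde\b^\delta)^-\|_{L^1 L^\infty}$ is bounded uniformly in $\delta$. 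The regularized problem \eqref{eq:divFP-e} is classically well-posed; denote by $u^\delta$ its smooth solution. Testing \eqref{eq:divFP-e} with $u^\delta$ and integrating by parts (so that $\int_{\R^d}\dive(\tilde\b^\delta u^\delta)u^\delta = \tfrac12\int_{\R^d}(\dive\tilde\b^\delta)|u^\delta|^2$), the uniform ellipticity \ref{assu:elliptic} yields
\begin{equation*}
\frac12\frac{\de}{\de t}\|u^\delta(t,\cdot)\|_{L^2}^2 + \frac{\alpha}{2}\int_{\R^d}|\nabla u^\delta(t,x)|^2\de x \le \frac12\|(\dive\tilde\b)^-(t,\cdot)\|_{L^\infty}\|u^\delta(t,\cdot)\|_{L^2}^2 ,
\end{equation*}
and Gronwall's lemma (the coefficient $t\mapsto\|(\dive\tilde\b)^-(t,\cdot)\|_{L^\infty}$ lies in $L^1(0,T)$) gives $\sup_{t\in[0,T]}\|u^\delta(t,\cdot)\|_{L^2}^2 + \alpha\iint_{(0,T)\times\R^d}|\nabla u^\delta|^2\,\de x\,\de t \le C\,\|u_0\|_{L^2}^2$, with $C$ depending only on $\|(\dive\tilde\b)^-\|_{L^1 L^\infty}$ and $T$, uniformly in $\delta$.

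The final step is the passage to the limit. Up to a subsequence $u^\delta \weaktos u$ in $L^\infty((0,T);L^2(\R^d))$ and $u^\delta \weakto u$ in $L^2((0,T);H^1(\R^d))$, so that $u$ is in the parabolic class. Fixing $\varphi\in C^\infty_c([0,T)\times\R^d)$ with $\mathrm{supp}\,\varphi \subset [0,T)\times K$ and writing the weak formulation of \eqref{eq:divFP-e}, in the drift term I split $\tilde\b^\delta u^\delta - \tilde\b u = (\tilde\b^\delta - \tilde\b)u^\delta + \tilde\b(u^\delta - u)$: the first piece is controlled by $\|\tilde\b^\delta - \tilde\b\|_{L^1(L^2(K))}\|u^\delta\|_{L^\infty L^2}\to 0$ and the second vanishes because $\tilde\b\,\nabla\varphi\in L^1((0,T);L^2(\R^d))$ pairs with the weak-$*$ convergence of $u^\delta$; in the diffusion term $a_{ij}^\delta\,\partial_i\varphi \to a_{ij}\,\partial_i\varphi$ strongly in $L^2((0,T)\times\R^d)$ (using $\a\in L^2((0,T);L^2(\R^d))$ and the compact support of $\varphi$) while $\partial_j u^\delta\weakto\partial_j u$ in $L^2((0,T)\times\R^d)$, so the product passes to the limit; finally $\int_{\R^d} u_0^\delta\varphi(0,\cdot)\to\int_{\R^d} u_0\varphi(0,\cdot)$. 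Hence $u$ is a parabolic solution of \eqref{eq:divFP} and, by Proposition~\ref{prop:equivalence}, a parabolic solution of \eqref{eq:fp}. The only genuinely delicate point is this limit passage in the two bilinear terms, which hinges on coupling the \emph{strong} convergence of the mollified coefficients with the \emph{weak} convergence of $u^\delta$ and $\nabla u^\delta$; the localization afforded by the compactly supported test function is precisely what lets us work with $\tilde\b$ only locally square-integrable in space. Everything else is a routine energy estimate together with weak compactness.
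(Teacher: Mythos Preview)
Your proof is correct and follows essentially the same approach as the paper: regularize the coefficients, derive the $L^\infty_t L^2_x \cap L^2_t H^1_x$ energy estimate by testing the smooth divergence-form equation against the solution and using uniform ellipticity together with Gronwall, then pass to the limit by weak compactness. The only cosmetic differences are that you first invoke Proposition~\ref{prop:equivalence} to reduce explicitly to \eqref{eq:divFP} (whereas the paper regularizes \eqref{eq:fp} directly and rewrites the smooth problem in divergence form), and that you spell out the limit passage in the bilinear terms more carefully than the paper's one-line ``standard compactness argument''.
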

\begin{proof}
The proof follows from the same regularization procedure developed in Proposition \ref{prop:existence_weak_sol_2}: we consider the unique solution $u^\delta$ of the regularized problem \eqref{eq:ad_bae}. Then, we define the function $\beta(s)=s^2/2$ and integrating in time the inequality \eqref{est:matr-1} we get
\begin{equation}\label{est:esistenza-parabolic}
\|u^\delta(t,\cdot)\|_{L^2}^2+\alpha\int_0^t\int_{\R^d} |\nabla u^\delta(s,x)|^2\de x\de s\leq \|u^\delta_0\|_{L^2}^2-\int_0^t\int_{\R^d}\dive\tilde{\b}^\delta(s,x)|u^\delta(s,x)|^2\de x\de s.
\end{equation}
From Proposition \ref{prop:existence_weak_sol_2} we know that 
$$
\|u^\delta(t,\cdot)\|_{L^2}^2\leq\|u^\delta_0\|_{L^2}^2 e^{\int_0^T\|(\dive\tilde\b)^-(s,\cdot)\|_{\infty}\de s},
$$ 
and together with \eqref{est:esistenza-parabolic} we obtain that
\begin{align}
    \int_0^T\int_{\R^d}|\nabla u^\delta(t,x)|^2\de x \de t&\leq \frac{1}{\alpha}\left(\|u_0\|_{L^2}^2+\|(\dive\tilde\b)^-\|_{L^1L^\infty}\|u^\delta\|_{L^\infty L^2}^2\right)\nonumber\\
    &\leq \frac{1}{\alpha}\left(\|u_0\|_{L^2}^2+\|(\dive\tilde\b)^-\|_{L^1L^\infty}\de s\|u_0\|_{L^2}^2 e^{\int_0^T\|(\dive\tilde\b)^-(s,\cdot)\|_{\infty}\de s}\right).
\end{align}
Thus, we have obtained that
$$
\|u^\delta\|_{L^\infty L^2}+\|\nabla u^\delta\|_{L^2L^2}\leq C(T,\alpha,\|u_0\|_{L^2},\|(\dive\tilde\b)^-\|_{L^1L^\infty}),
$$
and, since the equation is linear, the conclusion follows from a standard compactness argument.
\end{proof}

We can now state the uniqueness theorem for solutions in the parabolic class. The proof easily follows from Theorem \ref{thm:esistenza unicita fp div} and Proposition \ref{prop:equivalence}, thus we omit the details.

\begin{thm}[Uniqueness of parabolic solutions]\label{thm:uniqueness_weak_parabolic_FP} 
Let $u_0\in L^2\cap L^q(\R^d)$ be a given initial datum and assume that
\begin{itemize}
    \item[(i)] $\b\in L^2((0,T); L^p(\R^d;\R^d))$ with $\frac{1}{p}+\frac{1}{q}= \frac{1}{2}$;
    \item[(ii)] $\tilde\b$ satisfies the growth condition $$\frac{\tilde\b}{1+|x|}\in L^1((0,T);L^1(\R^d;\R^d))+L^1((0,T);L^\infty(\R^d;\R^d));$$
    \item[(iii)] $\a$ satisfies \ref{assu:bounded}, \ref{assu:dive_a_and_b} and \ref{assu:elliptic};
    \item[(iv)]
    $
    (\dive\tilde \b)^-\in L^1((0,T);L^\infty(\R^d)).
    $
\end{itemize}
Then, there exists a unique parabolic solution $u$ to \eqref{eq:fp}.
\end{thm}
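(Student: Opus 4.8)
The plan is to reduce \eqref{eq:fp} to the divergence-form equation \eqref{eq:divFP} and then invoke the well-posedness already established for the latter. Under \ref{assu:dive_a_and_b} the row-divergences $\sum_j\partial_j a_{ij}$ are bounded, so $\tilde\b=\b-\tfrac12\sum_j\partial_j a_{ij}$ is well defined, and Proposition \ref{prop:equivalence} tells us that a function $u\in L^\infty_tL^2_x\cap L^2_tH^1_x$ is a parabolic solution of \eqref{eq:fp} if and only if it is a parabolic solution of \eqref{eq:divFP} with drift $\tilde\b$. Hence it suffices to check that $(\tilde\b,\a,u_0)$ satisfies the hypotheses of Theorem \ref{thm:esistenza unicita fp div} and to transport existence and uniqueness back through Proposition \ref{prop:equivalence}.

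The hypothesis transfer is routine. First, the relation $1/p+1/q=1/2$ forces $p,q\in[2,\infty]$, as required by Theorem \ref{thm:esistenza unicita fp div}. Hypothesis (i) of that theorem, namely $\tilde\b\in L^2_tL^p_x$, follows from $\b\in L^2_tL^p_x$ together with $\sum_j\partial_j a_{ij}\in L^\infty([0,T]\times\R^d)$; the growth splitting of $\tilde\b/(1+|x|)$ is exactly hypothesis (ii) here (and is consistent with \ref{assu:dive_a_and_b}, which yields $\tfrac{1}{2(1+|x|)}\sum_j\partial_j a_{ij}\in L^1_tL^\infty_x$); $(\dive\tilde\b)^-\in L^1_tL^\infty_x$ is hypothesis (iv); \ref{assu:bounded} and \ref{assu:elliptic} are part of hypothesis (iii); and $u_0\in L^2\cap L^q$ is assumed. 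For existence, Theorem \ref{thm:esistenza unicita fp div} produces a parabolic solution $u\in L^\infty_tL^2_x\cap L^2_tH^1_x$ of \eqref{eq:divFP} with drift $\tilde\b$, which by Proposition \ref{prop:equivalence} is a parabolic solution of \eqref{eq:fp} (alternatively, existence may be quoted directly from Proposition \ref{prop:existence_parabolic_class_2}). For uniqueness, if $u_1,u_2$ are two parabolic solutions of \eqref{eq:fp}, then by Proposition \ref{prop:equivalence} both solve \eqref{eq:divFP} with drift $\tilde\b$ in the parabolic sense; since the equation is linear, $u_1-u_2$ is a parabolic solution of \eqref{eq:divFP} with zero initial datum, and the uniqueness part of Theorem \ref{thm:esistenza unicita fp div} gives $u_1=u_2$.

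Because Theorem \ref{thm:esistenza unicita fp div} is already available, no substantial difficulty remains; the only point deserving care is the verification that $\tilde\b$ inherits the structural integrability of $\b$, and, in the uniqueness step, that the difference $u_1-u_2$ — a priori only in $L^\infty_tL^2_x\cap L^2_tH^1_x$ — pairs against $\tilde\b$ with enough integrability for the commutator/Gronwall scheme underlying Theorem \ref{thm:esistenza unicita fp div} to run. This is handled exactly as in the proof of that theorem, using its a priori $L^\infty_t(L^2_x\cap L^q_x)$ estimate (coming from $u_0\in L^2\cap L^q$) together with the convergence of the commutators provided by Lemmas \ref{lem:conv_comm2} and \ref{lem:conv_comm5}, so nothing new is needed.
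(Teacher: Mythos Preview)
Your proposal is correct and follows exactly the paper's own approach: the paper's proof consists of the single sentence that the result ``easily follows from Theorem \ref{thm:esistenza unicita fp div} and Proposition \ref{prop:equivalence}, thus we omit the details'', and you have simply spelled out that reduction by checking the hypothesis transfer and invoking linearity for the uniqueness step. The one minor slip---that $\sum_j\partial_j a_{ij}\in L^\infty(\R^d)$ does not literally give $\tilde\b\in L^p(\R^d)$ when $p<\infty$---is a point the paper itself glosses over here and only addresses later in its ``Further comments and remarks'', so your argument is at the same level of rigor as the original.
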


\begin{rem}
Note that if $p=\infty$ and $q=2$ we recover the existence and uniqueness of parabolic solutions to \eqref{eq:fp} in \cite[Theorem 4.3]{F08}.
\end{rem}

\subsection{Further comments and remarks}
In this section we collect some observations concerning the uniqueness theorem, taking into account the different forms of the equation that one can consider. We point out that in \cite{LBL_CPDE} there is a nice comparison of the different form of the equations, showing how commutator estimates allow to transfer well-posed results from one equation to another.

\begin{itemize}
%\item We can consider unbounded diffusion matrices, i.e. we can drop the hypothesis $\a\in L^\infty((0,T);L^\infty(\R^d; \R^{d\times d}))$, provided that we require $\a\in L^2((0,T);H^1_\mathrm{loc}(\R^d;\R^{d\times d}))$ and the growth condition 
%$$
%\frac{\a}{1+|x|^2}\in L^1((0,T);L^1(\R^d;\R^{d\times d}))+L^1((0,T);L^\infty(\R^d;\R^{d\times d})).
%$$
%The proof is similar to Theorem \ref{thm:esistenza unicita fp div}. The only difference is that one uses the convergence in $L^1$ of the commutator $s^\delta$, since Lemma \ref{lem:conv_comm3} holds in this case. \gennaro{Controllare questa cosa, mi sa che non \`e vero a causa del pezzo con $\varphi_R$: il problema \`e che $\beta_M$ non \`e convessa e per controllare quel pezzo necessariamente serve $\a$ limitata. Nel caso di soluzioni limitate il discorso \`e diverso, non bisogna tagliare la $\beta$ e quindi possiamo scegliare $\beta(z)=|z|^q$ che \`e convessa.}

\item If we consider the Fokker-Planck equation in transport form, i.e.
\begin{equation}
    \partial_t u+\b \cdot \nabla u-\frac{1}{2}\sum_{ij}\partial_{i}(a_{ij}\partial_j u)=0,
\end{equation}
we can drop the assumption on the $L^p-L^q$ integrability of $\b$ and $u$ in Theorem \ref{thm:esistenza unicita fp div}. This means that we can consider $\b\in L^2((0,T);L^2(\R^d;\R^d))$ and $u_0\in L^2(\R^d)$, then we use the convergence of the commutator $r_1^\delta$ in Lemma \ref{lem:conv_comm} to prove the uniqueness of parabolic solutions. The same observation holds true if instead we consider the Fokker-Planck equation \eqref{eq:fp} and we assume that $\dive\b\in L^1L^\infty$.

%\item We can actually relax the integrability in time of the divergence considering $$(\dive\tilde \b)^{-}\in L^1((0,T);L^\infty(\R^d)).$$ \gennaro{Forse questo meglio scriverlo direttamente nelle assunzioni e nei teoremi?}

\item We can relax the assumption on $\sum_{j=1}^d \partial_j a_{ij}\in L^\infty((0,T)\times\R^d;\R^d)$ in Proposition \ref{prop:existence_weak_sol_2} and Proposition \ref{prop:existence_parabolic_class_2}. Note that in \cite{F08} the author considers the case of \eqref{eq:fp} with a bounded vector field $\b$, thus he needs \ref{assu:dive_a_and_b} to use the formulation in divergence form with a bounded vector field $\tilde\b$. Indeed, the results in Section 3 hold true if one consider
$$
\tilde\b:=\b-\frac{1}{2}\sum_j\partial_{j}a_{ij}\in L^2((0,T);L^2\cap L^p(\R^d;\R^d)), 
$$
and the growth condition
$$
\tilde \b\in L^1((0,T);L^1(\R^d;\R^d))+L^1((0,T);L^\infty(\R^d;\R^d)).
$$

\item Another form of the equation that could be considered is the following
\begin{equation}\label{eq:2 derivate su u}
\partial_t u+\dive(\b u)-\frac{1}{2}\sum_{ij}a_{ij}\partial_{ij} u=0.
\end{equation}
Roughly speaking, if we test the equation with a test function $\varphi\in C^\infty_c([0,T)\times \R^d)$ and we integrate by parts we get that
\begin{equation}
\int_{0}^T\int_{\R^d}u\big(\partial_t\varphi+\b\cdot\nabla\varphi\big)-\frac{1}{2}\sum_{ij}\partial_j(a_{ij}\varphi)\partial_i u\,\de x\de t+\int_{\R^d}u_0\varphi(0,\cdot)\de x=0.
\end{equation}
Thus, we can define weak solutions as in Definition \ref{def:weak-solution} requiring $\a, \sum_j\partial_j a_{ij}\in L^2_{t,x}$. 
Then, if one assumes \ref{assu:bounded}, \ref{assu:dive_a_and_b}, \ref{assu:elliptic}, and $(\dive\tilde\b)^-\in L^1((0,T);L^\infty(\R^d))$ the existence and uniqueness of the weak solution to \eqref{eq:2 derivate su u} can be proved similarly to Theorem \ref{thm:esistenza unicita fp div}. 
\end{itemize}

\section{The Regularity Theorems} 
In this section we present some regularity results on distributional solutions of the Fokker-Planck equation. These then will be the key to achieving uniqueness in the class of distributional solutions in several contexts.

\subsection{Local regularity Theorem and uniqueness}
Below we prove a regularity result which guarantees that distributional solutions of \eqref{eq:fp} in the class $L^\infty((0,T);L^q(\R^d))$ are actually in $L^2((0,T);H^1_\mathrm{loc}(\R^d))$, provided that we have enough regularity on $\a$ and integrability $\b$.
\begin{thm}\label{thm:regolarita_FP}
Let $p,q \ge 1$ such that $\sfrac{1}{p}+\sfrac{1}{q}\leq \sfrac{1}{2}$. Assume that $\a\in L^2((0,T);W^{1,p}(\R^d;\R^{d \times d}))$ and $\b\in L^2((0,T); L^p(\R^d;\R^d))$ satisfy \ref{assu:dive_a_and_b} and \ref{assu:elliptic} and $(\dive\tilde\b)^-\in L^1((0,T);L^\infty(\R^d))$. Let $u\in L^\infty((0,T);L^q(\R^d))$ be a distributional solution to \eqref{eq:fp}, then $u\in L^2((0,T);H^1_\mathrm{loc}(\R^d))$.
\end{thm}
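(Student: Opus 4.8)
The plan is to produce, for every bounded open set $Q\subset\R^d$, a bound on the mollified solutions $u^\delta:=u*\rho^\delta$ (with $(\rho^\delta)_\delta$ as in \eqref{def:moll}) in $L^2((0,T);H^1(Q))$ that is uniform in $\delta$, and then pass to the limit. First I would mollify the equation: convolving the distributional identity in Definition~\ref{def:weak_sol_fp} with $\rho^\delta$ and recalling \eqref{eq:comm_def}, \eqref{eq:comm2_def}, one finds that $u^\delta$ solves, in the sense of distributions on $(0,T)\times\R^d$,
\[
\partial_t u^\delta+\dive(\tilde\b u^\delta)-\tfrac12\partial_i(a_{ij}\partial_j u^\delta)=r^\delta+\tfrac12 s^\delta ,
\]
where the reduction to divergence form uses \ref{assu:dive_a_and_b} to make sense of $\tilde\b\in L^2((0,T);L^p_{\mathrm{loc}}(\R^d;\R^d))$. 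The structural fact I would exploit is that, as is already apparent from the proofs of Lemma~\ref{lem:conv_comm2} and Lemma~\ref{lem:conv_comm4}, both commutators are \emph{in divergence form}, $r^\delta=\dive R^\delta$ and $s^\delta=\dive S^\delta$, with fluxes $R^\delta,S^\delta$ that — under $\sfrac{1}{p}+\sfrac{1}{q}\le\sfrac{1}{2}$, using $\b\in L^2L^p$, $\nabla\a\in L^2L^p$ and $u\in L^\infty L^q$ — are bounded (indeed convergent to $0$) in $L^2((0,T);L^2_{\mathrm{loc}}(\R^d))$. This is the substitute, in the present low-regularity regime, for the $L^1$ convergence of the commutators that one would use if $u$ were already known to be parabolic.

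Next I would run a localized energy estimate. Fix a bounded open $Q$ and $\varphi\in C^\infty_c(\R^d)$ with $0\le\varphi\le1$, $\varphi\equiv1$ on $Q$, set $Q':=\operatorname{supp}\varphi$, and test the equation for $u^\delta$ against $u^\delta\varphi^2$, integrating on $(0,t)\times\R^d$ (the requisite Sobolev regularity in time of $u^\delta$ makes the energy identity legitimate). The time term produces $\tfrac12\int(u^\delta(t))^2\varphi^2-\tfrac12\int(u_0^\delta)^2\varphi^2$ with $u_0^\delta=u_0*\rho^\delta$; the drift term, after an integration by parts, produces $\tfrac12\langle\dive\tilde\b,(u^\delta)^2\varphi^2\rangle$ plus an error supported in $\{\nabla\varphi\ne0\}$ and carrying a factor $\tilde\b$; the diffusion term produces $\tfrac12\int_0^t\!\int a_{ij}\partial_iu^\delta\partial_ju^\delta\varphi^2$, which by \ref{assu:elliptic} is $\ge\tfrac{\alpha}{2}\int_0^t\!\int|\nabla u^\delta|^2\varphi^2$, plus a cross term $\int_0^t\!\int a_{ij}\partial_ju^\delta\,u^\delta\,\varphi\,\partial_i\varphi$. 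I would then estimate: the $\dive\tilde\b$ term, after moving it to the right, using only its negative part ($(\dive\tilde\b)^-\in L^1L^\infty$) together with $\|u^\delta(s,\cdot)\|_{L^2(Q')}\lesssim\|u\|_{L^\infty L^q}$ (valid since $q\ge2$); the cutoff-derivative errors carrying a factor $\tilde\b$ or $\a$ by H\"older's inequality in the space variable against $\|u\|_{L^\infty L^q}^2$ and $\|\tilde\b\|_{L^2L^p(Q')}$, resp.\ $\|\a\|_{L^2L^p(Q')}$ — and this is exactly the point where the budget $\sfrac{1}{p}+\sfrac{1}{q}\le\sfrac{1}{2}$ is consumed; and every term containing a factor $\a\,\partial u^\delta$ by Young's inequality, so that a small multiple of $\int_0^t\!\int|\nabla u^\delta|^2\varphi^2$ is produced, to be absorbed into the coercive term.

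It remains to treat the commutator contribution $\int_0^t\!\int(r^\delta+\tfrac12 s^\delta)u^\delta\varphi^2$, and here I would \emph{not} argue by $H^{-1}$--$H^1$ duality — which would be circular, the $H^1$ regularity of $u$ being exactly the conclusion — but rather integrate by parts using the divergence form: with $G^\delta:=R^\delta+\tfrac12 S^\delta$ bounded in $L^2((0,T);L^2(Q'))$, the term equals $-\int_0^t\!\int G^\delta\cdot\nabla u^\delta\,\varphi^2-\int_0^t\!\int G^\delta\cdot\nabla(\varphi^2)\,u^\delta$; the first piece is split by Young (absorb $\e\int_0^t\!\int|\nabla u^\delta|^2\varphi^2$, retain $C_\e\|G^\delta\|^2_{L^2L^2(Q')}$), the second is bounded by Cauchy--Schwarz and $\|u\|_{L^\infty L^q}$. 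Since for each fixed $\delta$ the function $u^\delta$ is smooth, $\int_0^t\!\int|\nabla u^\delta|^2\varphi^2<\infty$ and the absorption is legitimate; choosing the Young parameters small compared to $\alpha$ gives $\|\nabla u^\delta\|_{L^2((0,T)\times Q)}\le C$ uniformly in $\delta$. Finally, $u^\delta\to u$ in $\mathcal D'$ together with this uniform bound give, along a subsequence, $\nabla u^\delta\rightharpoonup\nabla u$ weakly in $L^2((0,T)\times Q)$; hence $u\in L^2((0,T);H^1(Q))$, and since $Q$ is arbitrary, $u\in L^2((0,T);H^1_{\mathrm{loc}}(\R^d))$.

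I expect the main difficulty to lie in the bookkeeping of the localized energy estimate — organizing the several cutoff-derivative error terms so that the single integrability relation $\sfrac{1}{p}+\sfrac{1}{q}\le\sfrac{1}{2}$ suffices for each of them — and in being careful that the commutator term is controlled through its divergence structure and absorption into $\tfrac{\alpha}{2}\int|\nabla u^\delta|^2\varphi^2$, rather than through a duality pairing. A minor technical point is the meaning of $\langle\dive\tilde\b,(u^\delta)^2\varphi^2\rangle$, since a priori $\dive\tilde\b$ is only a distribution with $L^\infty$ negative part; this is handled by a routine density/approximation argument, or by mollifying the coefficients as well, as in the proof of Theorem~\ref{thm:esistenza unicita fp div}.
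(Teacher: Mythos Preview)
Your proposal is correct and follows essentially the same route as the paper: mollify, run a localized energy identity for $u^\delta\varphi$, bound all cutoff-derivative errors by H\"older using the budget $\sfrac{1}{p}+\sfrac{1}{q}\le\sfrac{1}{2}$, and absorb the commutator contribution into the coercive term via Young to obtain a $\delta$-uniform bound on $\|\nabla u^\delta\|_{L^2L^2(B_R)}$. The one place where you consciously deviate --- handling the commutators through their explicit divergence form $r^\delta=\dive R^\delta$, $s^\delta=\dive S^\delta$ rather than by $H^{-1}$--$H^1$ duality --- is not a genuine difference: the paper's estimate $\langle r^\delta+s^\delta,\,u^\delta\varphi_R\rangle\le\|r^\delta+s^\delta\|_{L^2H^{-1}(B_{2R})}\|u^\delta\varphi_R\|_{L^2H^1(B_{2R})}$ followed by Young and absorption is exactly your integration-by-parts computation written in norm language, and there is no circularity because $u^\delta=u*\rho^\delta$ is smooth for every fixed $\delta$, so $\|u^\delta\varphi_R\|_{L^2H^1}$ is trivially finite and legitimately available for absorption.
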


\begin{proof} To commence, we observe that $\sfrac{1}{p}+\sfrac{1}{q}\leq \sfrac{1}{2}$ clearly implies that both $p,q \ge 2$ and then any $u\in L^\infty((0,T);L^q(\R^d))$ belongs to $L^\infty((0,T);L^2_\mathrm{loc}(\R^d))$. The strategy is to prove that $\nabla u\in L^2((0,T); L^2_\mathrm{loc}(\R^d;\R^d))$ and this will be achieved exhibiting an approximating sequence $(u^\delta)_{\delta}$ enjoying uniform bounds on $\nabla u^\delta$: in turn, this estimate will be obtained as a consequence of Lemma \ref{lem:conv_comm2} and Lemma \ref{lem:conv_comm4}. 
	
Let $(\rho^\delta)_{\delta}$ be a standard family of mollifiers, then we know that the function $u^\delta:=u*\rho^\delta$ solves the equation \eqref{eq:divFP-e+comm} with $\tilde \b$ instead of $\b$. Consider a smooth function $\varphi$ such that 
$$
\varphi\geq 0,\hspace{0.3cm}\mathrm{supp} \ \varphi\subset B_2,\hspace{0.3cm}\varphi= 1 \mbox{ in }B_1,
$$
and define $\varphi_R(x)=\varphi\left(x/R\right)$. We multiply the equation in  \eqref{eq:divFP-e+comm} by $u^\delta\varphi_R$ and we integrate over $(0,t)\times\R^d$. Then, integration by parts together with \ref{assu:elliptic} gives that
\begin{align*}
\int_{\R^d}|u^\delta(t,x)|^2\varphi_R\de x&+\alpha\int_0^t\int_{\R^d}|\nabla u^\delta|^2\varphi_R\de x\de s
\leq \int_{\R^d}|u^\delta_0|^2\varphi_R\de x+\int_0^t\int_{\R^d}|u^\delta|^2\tilde\b\cdot\nabla\varphi_R \de x\de s\\
&-\int_0^t\int_{\R^d}|u^\delta|^2\dive \tilde \b\,\varphi_R \de x\de s+\sum_{ij}\int_0^t\int_{\R^d}\partial_ja_{ij}\frac{|u^\delta|^2}{2}\partial_i\varphi_R\de x\de s\\
&+\sum_{ij}\int_0^t\int_{\R^d}a_{ij}\frac{|u^\delta|^2}{2}\partial_j\partial_i\varphi_R\de x\de s+2\int_0^t\int_{\R^d}(r^\delta+s^\delta)u^\delta\varphi_R\de x\de s.
\end{align*}
We now estimate separately the terms on the right hand side (the constant may change from line to line and is allowed to depend on $T, \|\varphi\|_{C^2}$, $\|(\dive \tilde \b)^-\|_{L^1L^\infty}$, $\|u\|_{L^\infty L^q}$, $\|u_0\|_{L^q}$, $\|\partial_j a_{ij}\|_\infty$):
\begin{itemize}
\item $ \displaystyle\int_{\R^d}|u_0^\delta|^2\varphi_R\de x\leq \|u_0\|_{L^q}^2\|\varphi_R\|_{L^{\frac{q}{q-2}}}\leq C \|u_0\|_{L^q}^2 R^{d\frac{q-2}{q}}$;
\item if $s$ is such that $2/q+1/p+1/s=1$, then we have
\begin{align*}
\int_0^T\int_{\R^d}|u^\delta|^2 \tilde\b\cdot\nabla\varphi_R \de x \de t& \leq \|u^\delta\|_{L^\infty L^q}^2\|\tilde \b\|_{L^1L^p}\|\nabla\varphi_R\|_{L^s}\\
&\leq C\|u\|_{L^\infty L^q}^2\|\tilde\b\|_{L^1L^p}\|\nabla\varphi_R\|_{L^s}\\
&\leq C R^{\frac{d}{s}-1}\|u\|_{L^\infty L^q}^2\|\tilde\b\|_{L^1L^p};
\end{align*}
\item since $\varphi_R$ is positive, we can use the bound on the negative part of $\dive\tilde \b$ to obtain that
\begin{align*}
-\int_0^T\int_{\R^d}|u^\delta|^2\dive \tilde \b\,\varphi_R \de x\de s&\leq \int_0^T\|(\dive \tilde \b)^-(s,\cdot)\|_\infty\int_{\R^d}|u^\delta|^2\,\varphi_R \de x\de s\\
&\leq \|(\dive \tilde \b)^-\|_{L^1L^\infty}\|u^\delta\|_{L^\infty L^q}^2\|\varphi_R\|_{L^{\frac{q}{q-2}}}\\
&\leq \|(\dive \tilde \b)^-\|_{L^1L^\infty}\|u\|_{L^\infty L^q}^2 R^{d\frac{q-2}{q}};
\end{align*}
\item we use \ref{assu:dive_a_and_b} to obtain that 
\begin{align*}
\sum_{ij}\int_0^T\int_{\R^d}\partial_ja_{ij}\frac{|u^\delta|^2}{2}\partial_i\varphi_R\de x\de s&=\sum_i\int_0^T\int_{\R^d}\left(\sum_j\partial_ja_{ij}\right)\frac{|u^\delta|^2}{2}\partial_i\varphi_R\de x\de s\\
&\leq C\left\|\sum_j\partial_ja_{ij}\right\|_{L^\infty}\|u^\delta\|_{L^\infty L^q}^2\|\nabla\varphi_R\|_{L^{\frac{q}{q-2}}}\\
&\leq C\left\|\sum_j\partial_ja_{ij}\right\|_{L^\infty}\|u\|_{L^\infty L^q}^2 R^{d\frac{q-2}{q}-1};
\end{align*}
\item if $s$ is such that $2/q+1/p+1/s=1$, then we have
\begin{align*}
\int_0^T\int_{\R^d}a_{ij}\frac{|u^\delta|^2}{2}\partial_i\partial_j\varphi_R\de x\de t\leq C R^{\frac{d}{s}-2}\|u\|_{L^\infty L^q}^2\|a_{ij}\|_{L^1L^p}.
\end{align*}
\end{itemize}
For the commutator terms, first of all notice that $u^\delta \varphi_R\in H^1_0(B_{2R})$. Then, we have that
\begin{itemize}
    \item the transport commutator
\begin{align*}
\int_0^T\int_{\R^d}r^\delta u^\delta \varphi_R \de x \de t&\leq C \|r^\delta\|_{L^2 H^{-1}(B_{2R})} \|u^\delta\varphi_R\|_{L^2H^1(B_{2R})}\\
&\leq C  \|r^\delta\|_{L^2 H^{-1}(B_{2R})}\left( \|u^\delta\varphi_R\|_{L^\infty L^2(B_{2R})}+\|\nabla (u^\delta\varphi_R)\|_{L^2L^2(B_{2R})}\right);
\end{align*}
\item the diffusion commutator
\begin{align*}
\int_0^T\int_{\R^d}s^\delta u^\delta \varphi_R \de x \de t&\leq C \|s^\delta\|_{L^2 H^{-1}(B_{2R})} \|u^\delta\varphi_R\|_{L^2H^1(B_{2R})}\\
&= C  \|s^\delta\|_{L^2 H^{-1}(B_{2R})}\left( \|u^\delta \varphi_R\|_{L^\infty L^2(B_{2R})}+\|\nabla( u^\delta\varphi_R)\|_{L^2L^2(B_{2R})}\right).
\end{align*}
\end{itemize}
%Now $\varphi_R$ is non-negative, $0\leq \varphi_R\leq 1$ and it is identically $1$ on $B_R$ and
We now use Young inequality to obtain the bound
$$
\int_0^T\int_{\R^d}(r^\delta+s^\delta) u^\delta \varphi_R \de x \de t \leq C_\alpha (\|r^\delta\|^2_{L^2 H^{-1}(B_{2R})}+\|s^\delta\|^2_{L^2 H^{-1}(B_{2R})})+\frac{\alpha}{2} \|u^\delta\varphi_R\|_{L^2H^1(B_{2R})}^2.
$$
Then, $0\leq\varphi_R\leq 1$ implies $\varphi_R^2 \le \varphi_R$, thus
\begin{align*}
\|u^\delta\varphi_R\|_{L^2H^1(B_{2R})}^2\leq \int_0^T\int_{B_{2R}}|u^\delta|^2\varphi_R\de x \de s+\int_0^T\int_{B_{2R}}|\nabla u^\delta|^2\varphi_R\de x \de s+\int_0^T\int_{B_{2R}}|u^\delta|^2|\nabla\varphi_R|^2\de x \de s.
\end{align*}

The first and the last summand can be estimated similarly as above and gives respectively 
\[
\int_0^T\int_{B_{2R}}|u^\delta|^2\varphi_R\de x \de s \le C \|u\|_{L^\infty L^q}^2 R^{d\frac{q-2}{q}},
\]

\[
\int_0^T\int_{B_{2R}}|u^\delta|^2|\nabla\varphi_R|^2\de x \de s \le C \|u\|^2_{L^\infty L^q} R^{d\frac{q-2}{q}-2}.
\]
All in all, 
\begin{align*}
\alpha\int_0^t\int_{\R^d}|\nabla u^\delta|^2\varphi_R\de x\de s  & \le  C \left(R^{d\frac{q-2}{q}}+R^{\frac{d}{s}-1}+R^{d\frac{q-2}{q}-1}\right) + C_\alpha (\|r^\delta\|^2_{L^2 H^{-1}(B_{2R})}+\|s^\delta\|^2_{L^2 H^{-1}(B_{2R})}) \\ 
& \qquad + \frac{\alpha}{2} \int_0^T\int_{B_{2R}}|\nabla u^\delta|^2\varphi_R\de x \de s .
\end{align*}
Passing to the supremum in $t$ in the left hand side we obtain that  %and by absorbing the terms involving $\nabla u^\delta$ we obtain that 
\begin{align*}
\frac{\alpha}{2} \int_0^T \int_{B_R} |\nabla u^\delta|^2\de x \de s  &\le \frac{\alpha}{2}\int_0^T\int_{\R^d}|\nabla u^\delta|^2\varphi_R\de x\de s \\
&\le C \left(R^{d\frac{q-2}{q}}+R^{\frac{d}{s}-1}+R^{d\frac{q-2}{q}-1}\right) + C_\alpha (\|r^\delta\|^2_{L^2 H^{-1}(B_{2R})}+\|s^\delta\|^2_{L^2 H^{-1}(B_{2R})}).
\end{align*}
Finally, the commutators $r^\delta, s^\delta$ go to $0$ in $L^2H^{-1}_\mathrm{loc}$ by Lemma \ref{lem:conv_comm3} and Lemma \ref{lem:conv_comm4}, thus they are uniformly bounded for $\delta$ small enough (possibly depending on $R$), i.e.
$$
\|r^\delta\|^2_{L^2 H^{-1}(B_{2R})}+\|s^\delta\|^2_{L^2 H^{-1}(B_{2R})}\leq \bar C(R),
$$
for some constant $\bar C(R)$ which may diverge as $R\to \infty$.
Therefore, we have 
\begin{equation}\label{eq:stima_locale} 
\|\nabla u^\delta\|^2_{L^2L^2(B_R)}\leq C \left(R^{d\frac{q-2}{q}}+R^{\frac{d}{s}-1}+R^{d\frac{q-2}{q}-1}\right)+\bar C(R),
\end{equation}
which is finite uniformly in $\delta$ for any $R>0$. This proves that $\nabla u$ exists and belongs to $L^2((0,T);L^2(B_R))$.
\end{proof}

\begin{rem}
Notice that in the proof of Theorem \ref{thm:regolarita_FP} we do not use the bound
\begin{equation}\label{remark stima a priori}
\sup_{t\in (0,T)}\|u(t,\cdot)\|_{L^q}\leq \|u_0\|_{L^q},
\end{equation}
since $u$ is any given distributional solution and a priori we do not know if the energy bound \eqref{remark stima a priori} holds. However, a posteriori this is true under the assumptions of Theorem \ref{thm:regolarita_FP}: in fact, it can be easily proved using the regularity of the solution and the convergence of the commutators $r^\delta,s^\delta$ to $0$ in $L^1_{t,x,\mathrm{loc}}$. 
\end{rem}

\begin{rem}
In addition to regularity, Theorem \ref{thm:regolarita_FP} proves the existence of solutions in the class $L^2((0,T);H^1_\mathrm{loc}(\R^d))$. It would be interesting to prove the existence of solutions in this class with a more direct argument, instead of relying on regularity.
\end{rem}

Combining Theorem \ref{thm:regolarita_FP} and Theorem \ref{thm:uniqueness_weak_parabolic_FP}, we obtain the result which is transverse to that of \cite{F08}.
\begin{thm}
Let $\b\in L^\infty((0,T); L^\infty(\R^d;\R^d))$ and $\a\in L^\infty((0,T);W^{1,\infty}(\R^d;\R^{d\times d}))$ satisfy \ref{assu:dive_a_and_b}, \ref{ass:dive b}, and \ref{assu:elliptic}. Then there exists at most one distributional solution $u\in L^2((0,T)\times\R^d)$.
\end{thm}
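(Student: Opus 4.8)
The plan is to establish the stronger conclusion that \emph{every} distributional solution $u\in L^2((0,T)\times\R^d)$ of \eqref{eq:fp} is in fact a parabolic solution (indeed an element of the space $Y$), and then to invoke the uniqueness of parabolic solutions from Theorem~\ref{thm:uniqueness_weak_parabolic_FP}. By linearity this yields the claim: if $u_1,u_2$ are two distributional solutions with the same initial datum, then $u=u_1-u_2$ is a distributional solution with $u_0=0$, hence parabolic, hence $u\equiv 0$.

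For the regularity step I would adapt the argument of the Regularity Theorem~\ref{thm:regolarita_FP} to the exponents $p=\infty$, $q=2$ (so $1/p+1/q=1/2$), taking advantage of the fact that here $\b$, $\a$, $\nabla\a$ and $(\dive\tilde\b)^-$ all lie in $L^\infty((0,T)\times\R^d)$. Mollifying in space, $u^\delta:=u\ast\rho^\delta$ solves \eqref{eq:fp} up to the commutators $r^\delta$ and $s^\delta$ from \eqref{eq:comm_def} and \eqref{eq:comm2_def}, which, crucially, do not involve any derivative of $u$. Testing against $u^\delta$ and integrating over $[0,t]\times\R^d$, integration by parts together with the identity $\dive\b=\dive\tilde\b+\tfrac12\sum_{ij}\partial_{ij}a_{ij}$ makes the terms containing $\partial_j a_{ij}$ cancel, leaving exactly the energy estimate for the divergence form of the equation, $\tfrac12\tfrac{\de}{\de t}\|u^\delta\|_{L^2}^2+\tfrac\alpha2\|\nabla u^\delta\|_{L^2}^2\le \tfrac12\|(\dive\tilde\b)^-\|_{L^\infty_x}\|u^\delta\|_{L^2}^2+\int_{\R^d}(r^\delta+s^\delta)u^\delta\,\de x$, with $\alpha$ from \ref{assu:elliptic}. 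Since $q=2$, no spatial localization is needed. Bounding $\iint(r^\delta+s^\delta)u^\delta\le C_\alpha\|r^\delta+s^\delta\|_{L^2H^{-1}}^2+\tfrac\alpha4\|\nabla u^\delta\|_{L^2_{t,x}}^2+\tfrac\alpha4\|u\|_{L^2_{t,x}}^2$, absorbing the gradient term, and applying Gronwall in $t$ (using $(\dive\tilde\b)^-\in L^1((0,T);L^\infty(\R^d))$, which follows from \ref{ass:dive b}), one obtains
\[
\sup_{t\in[0,T]}\|u^\delta(t)\|_{L^2(\R^d)}^2+\int_0^T\!\!\int_{\R^d}|\nabla u^\delta|^2\,\de x\,\de t\ \le\ C\Bigl(\|u_0\|_{L^2}^2+\|u\|_{L^2_{t,x}}^2+\|r^\delta+s^\delta\|_{L^2((0,T);H^{-1}(\R^d))}^2\Bigr),
\]
with $C=C(\alpha,T,\|\b\|_\infty,\|\a\|_\infty,\|\nabla\a\|_\infty,\|(\dive\tilde\b)^-\|_\infty)$. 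Because $\b\in L^\infty_{t,x}$, $\a\in L^\infty_tW^{1,\infty}_x$ and $1/p+1/q=1/2$, the global versions of Lemmas~\ref{lem:conv_comm2} and \ref{lem:conv_comm4} (cf.\ Remark~\ref{rem:global_comm_h-1}) give $r^\delta+s^\delta\to 0$ in $L^2((0,T);H^{-1}(\R^d))$, so the right-hand side is bounded uniformly in $\delta$; letting $\delta\to 0$ yields $u\in L^\infty((0,T);L^2(\R^d))\cap L^2((0,T);H^1(\R^d))$. Finally, rewriting the equation in divergence form, $\partial_t u=-\dive(\tilde\b u)+\tfrac12\partial_i(a_{ij}\partial_j u)\in L^2((0,T);H^{-1}(\R^d))$, so $u\in Y$.

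With $u$ now known to be a parabolic solution, the uniqueness step is immediate: I would apply Theorem~\ref{thm:uniqueness_weak_parabolic_FP} with $p=\infty$, $q=2$, whose hypotheses are all satisfied here---$\b\in L^2((0,T);L^\infty(\R^d))$; $\tilde\b\in L^\infty_{t,x}$, so the growth condition $\tilde\b/(1+|x|)\in L^1_tL^\infty_x$ holds trivially; $\a$ satisfies \ref{assu:bounded} (as $W^{1,\infty}(\R^d)\hookrightarrow L^\infty(\R^d)$), \ref{assu:dive_a_and_b} and \ref{assu:elliptic}; and $(\dive\tilde\b)^-\in L^1_tL^\infty_x$. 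Hence the parabolic solution with datum $u_0$ is unique, and combining this with the regularity step gives uniqueness of distributional solutions in $L^2((0,T)\times\R^d)$. I expect the regularity step to be the genuine obstacle, and within it the treatment of the commutators. The point is that the a priori class is only $u\in L^2((0,T)\times\R^d)$ --- not $u\in L^\infty_tL^q_{x,\mathrm{loc}}$, the setting of Lemmas~\ref{lem:conv_comm2} and \ref{lem:conv_comm4} --- so one must re-examine those estimates using that $u$ is square integrable in \emph{both} variables, and also promote the convergence from $L^2H^{-1}_{\mathrm{loc}}$ to $L^2H^{-1}(\R^d)$ with the behaviour at spatial infinity controlled uniformly in $\delta$. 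This is exactly the place where the strengthened hypotheses of the statement --- the $L^\infty_{t,x}$ (rather than merely $L^2_t$, resp.\ $L^1_tL^\infty_x$) bounds on $\b$, $\nabla\a$ and $(\dive\tilde\b)^-$, together with the critical balance $1/p+1/q=1/2$ forced by $p=\infty$, $q=2$ --- are used: they make every error term and every commutator bound uniform, so that no information is lost when passing to the limit and removing the cutoff.
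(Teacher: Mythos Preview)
Your proposal is correct and follows essentially the same strategy as the paper: prove that every distributional solution $u\in L^2((0,T)\times\R^d)$ is parabolic (via the mollified energy estimate and the $L^2_tH^{-1}_x$ commutator convergence, made global by the critical balance $p=\infty$, $q=2$), then invoke Theorem~\ref{thm:uniqueness_weak_parabolic_FP}. The only cosmetic difference is that the paper first upgrades $u\in L^2_tL^2_x$ to $u\in L^\infty_tL^2_x$ and then reruns the local estimate~\eqref{eq:stima_locale} letting $R\to\infty$, whereas you obtain both the $L^\infty_tL^2_x$ and the $L^2_tH^1_x$ bounds in a single step without cutoff; in either version the essential point---which you correctly isolate---is that the $L^\infty_{t,x}$ hypotheses on $\b$, $\nabla\a$ and $(\dive\tilde\b)^-$ are exactly what allows one to rerun Lemmas~\ref{lem:conv_comm2} and~\ref{lem:conv_comm4} with $u$ merely in $L^2_{t,x}$ rather than $L^\infty_tL^q_x$.
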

\begin{proof}
Note that if we consider a distributional solution $u\in L^2((0,T);L^q(\R^d))$, arguing as in Theorem \ref{thm:regolarita_FP} it is easy to show that $u\in L^\infty((0,T);L^q(\R^d))$ if one assume that $\tilde \b \in L^\infty_tL^p_x$, $\a\in L^\infty_t W^{1,p}_x$, and $(\dive\tilde \b)^-\in L^\infty_{t,x}$ (the estimates depend on $\|u\|_{L^2 L^q}$). Thus, under our assumptions, if we set $p=\infty,q=2$, the solution $u$ belongs to $L^\infty((0,T);L^2(\R^d))$. Then, it is enough to repeat the proof of Theorem \ref{thm:regolarita_FP} noticing that the estimate \eqref{eq:stima_locale} is now uniform in $R$. In fact, by Remark \ref{rem:global_comm_h-1} the commutators converge in $L^2((0,T);H^{-1}(\R^d))$, thus the constant $\bar C$ is uniformly bounded in $R$. Moreover, if we substitute $q=2$ in the powers of $R$ in  \eqref{eq:stima_locale}, we obtain that
$$
\limsup_{R\to\infty} \|\nabla u\|_{L^2L^2(B_R)} < \infty.
$$
This implies the global information $\nabla u\in L^2((0,T);L^2(\R^d;\R^d))$. Then, we can conclude that $u\in L^\infty((0,T);L^2(\R^d))\cap L^2((0,T);H^1(\R^d))$ and the uniqueness follows from Theorem \ref{thm:uniqueness_weak_parabolic_FP}.
\end{proof}

\subsection{Extensions with suitable growth conditions}
We now discuss the role of the growth assumptions on $\b,\a$ to recover further uniqueness and regularity results for the Fokker-Planck equation \eqref{eq:fp}. We show that under suitable growth assumption uniqueness holds in the {\em local} parabolic class $L^2((0,T);H^{1}_\mathrm{loc}(\R^d))$. In particular, weak solutions of \eqref{eq:fp-div} are also unique in $L^2((0,T);H^{1}_\mathrm{loc}(\R^d))$ as a consequence of Proposition \ref{prop:equivalence}. The theorem is the following.

%\paolo{ricontrollare anche questo enunciato per via di approssimazione $\beta$...}
\begin{thm}\label{thm:uniqueness-local}
    Let $u_0\in L^q\cap L^\infty(\R^d)$ be a given initial datum with $q>2$ and assume that $\b\in L^2((0,T);L^p(\R^d;\R^d))$ and $\a\in L^2((0,T);W^{1,p}(\R^d;\R^{d \times d}))$ with $1/p+1/q= 1/2$ satisfy \ref{assu:dive_a_and_b} and \ref{assu:elliptic}, and $(\dive\tilde \b)^-\in L^1((0,T);L^\infty(\R^d))$. Moreover, we assume the following growth conditions
\begin{equation*}
\frac{\b(t,x)}{1+|x|}, \frac{\partial_j a_{ij}(t,x)}{1+|x|}\in L^1((0,T);L^1(\R^d;\R^d))+L^1((0,T);L^\infty(\R^d;\R^d)),
\end{equation*}
\begin{equation*}
    \frac{\a(t,x)}{1+|x|^2}\in L^1((0,T);L^1(\R^d;\R^{d\times d}))+L^1((0,T);L^\infty(\R^d;\R^{d\times d})).
\end{equation*}
Then, there exists at most one distributional solution $u\in L^\infty((0,T);L^q(\R^d))$.
\end{thm}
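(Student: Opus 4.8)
\emph{Strategy and reductions.} Since \eqref{eq:fp} is linear, it suffices to prove that the difference $u:=u_1-u_2$ of two distributional solutions in $L^\infty((0,T);L^q(\R^d))$ with the same initial datum vanishes identically; thus $u$ is a distributional solution of \eqref{eq:fp} with $u|_{t=0}=0$. Because $1/p+1/q=1/2\le 1/2$ and \ref{assu:dive_a_and_b}, \ref{assu:elliptic}, $(\dive\tilde\b)^-\in L^1((0,T);L^\infty(\R^d))$ hold, the Regularity Theorem \ref{thm:regolarita_FP} applies and gives $u\in L^2((0,T);H^1_\mathrm{loc}(\R^d))$, so $u$ is a \emph{locally parabolic} solution. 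Mollifying and using \ref{assu:dive_a_and_b} to rewrite the equation in divergence form, $u^\delta:=u*\rho^\delta$ solves
\[
\partial_t u^\delta + \dive(\tilde\b\, u^\delta) - \tfrac12\,\partial_i\big(a_{ij}\partial_j u^\delta\big) = r^\delta + \tfrac12\, s^\delta,
\]
with $r^\delta$, $s^\delta$ the commutators in \eqref{eq:comm_def} (relative to $\b$) and \eqref{eq:comm2_def}. Since $u\in L^\infty_t L^q_{x,\mathrm{loc}}$, $\b\in L^2_t L^p_{x,\mathrm{loc}}$ and $\a\in L^2_t W^{1,p}_{x,\mathrm{loc}}$ with $1/p+1/q\le 1/2$, Lemma \ref{lem:conv_comm2} and Lemma \ref{lem:conv_comm4} yield $r^\delta,s^\delta\to 0$ in $L^2((0,T);H^{-1}_\mathrm{loc}(\R^d))$.

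\emph{The renormalized energy estimate.} I would then mimic the proof of Theorem \ref{thm:esistenza unicita fp div}, testing the equation for $u^\delta$ against $\beta_M'(u^\delta)\varphi_R$, where $\varphi_R(x)=\varphi(x/R)$ is the usual cut-off ($\varphi\equiv 1$ on $B_1$, $\mathrm{supp}\,\varphi\subset B_2$) and $\beta_M$ is a truncation adapted to the $L^q$ setting: it coincides with a multiple of $|z|^q$ for $|z|\le M$, has \emph{bounded} first derivative — so that $\beta_M'(u^\delta)\varphi_R\in L^2((0,T);H^1_0(B_{2R}))$ uniformly in $\delta$, which legitimises pairing with the commutators — and satisfies $\beta_M(z)\le|z|^q$ together with a bound $\beta_M(z)\le C(M)$. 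Integration by parts and \ref{assu:elliptic} leave on the left-hand side $\int_{\R^d}\beta_M(u^\delta(t))\varphi_R\,\de x$ plus a non-negative Dirichlet-type term, while on the right-hand side one encounters: the term carrying $\dive\tilde\b$, controlled by $(\dive\tilde\b)^-\in L^1_tL^\infty_x$ and providing the Grönwall coefficient; the terms carrying $\nabla\varphi_R$, supported in the annulus $\{R<|x|<2R\}$, where $|\b|,|\partial_j a_{ij}|\lesssim R\,|\b/(1+|x|)|,R\,|\partial_j a_{ij}/(1+|x|)|$ and $|\a|\lesssim R^2\,|\a/(1+|x|^2)|$, so that — splitting each growth quantity into an $L^1_tL^1_x$ part (paired with $\beta_M(u^\delta)\le C(M)$) and an $L^1_tL^\infty_x$ part (paired with $\beta_M(u^\delta)\le|u^\delta|^q$, using $\int_{|x|>R}|u(t,\cdot)|^q\,\de x\to 0$ since $u(t,\cdot)\in L^q(\R^d)$) — they tend to $0$ as $R\to\infty$ for fixed $M$; the integrals over the super-level set $\{|u^\delta|>M\}$, controlled via Chebyshev's inequality (here $u\in L^\infty_tL^q_x$ gives $\mathscr{L}^{d+1}(\{|u^\delta|>M\})\le C M^{-q}$, uniformly in $\delta$) and the equi-integrability on bounded sets of $|u^\delta\tilde\b|^2$ and $|\nabla u^\delta|^2$ (these converge in $L^1_{t,x,\mathrm{loc}}$ because $u\tilde\b,\nabla u\in L^2_tL^2_{x,\mathrm{loc}}$), part of them being absorbed in the Dirichlet term; finally the commutator terms, which vanish as $\delta\to 0$ by the previous step.

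\emph{Conclusion.} After sending $\delta\to 0$ and applying Grönwall's lemma one obtains, for a.e.\ $t$,
\[
\int_{\R^d}\beta_M(u(t,x))\varphi_R\,\de x \le \big[\,\eta(R,M)+F(M)\,\big]\exp\!\big(C(q)\,\|(\dive\tilde\b)^-\|_{L^1L^\infty}\big),
\]
with $\eta(R,M)\to 0$ as $R\to\infty$ (for fixed $M$) coming from the $\nabla\varphi_R$ terms and $F(M)\to 0$ as $M\to\infty$ coming from the super-level terms; letting $R\to\infty$ (using Fatou on the left) and then $M\to\infty$, and exploiting $\beta_M(z)=0\iff z=0$ together with $\beta_M(u)\uparrow|u|^q$, we get $\int_{\R^d}|u(t,x)|^q\,\de x=0$ for a.e.\ $t$, i.e.\ $u\equiv 0$; via Proposition \ref{prop:equivalence} the same conclusion also covers the divergence-form equation \eqref{eq:divFP}.

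\emph{Main difficulty.} The delicate part is the last two steps: the solution is only \emph{locally} in $H^1$ and only globally in $L^q$ (neither in $L^\infty$ nor in $L^2$), so the argument of \cite{F08} cannot be transcribed verbatim. The truncation $\beta_M$ must be designed so as to simultaneously make the commutator pairing meaningful, keep the super-level corrections summable (which forces the limit $M\to\infty$ with $R$ frozen) and keep the far-field contributions under control (which forces $R\to\infty$ with $M$ frozen); it is precisely in reconciling these constraints that the growth hypotheses on $\b/(1+|x|)$, $\partial_j a_{ij}/(1+|x|)$ and $\a/(1+|x|^2)$, the $L^1_tL^1_x+L^1_tL^\infty_x$ splitting, and the integrability $u\in L^\infty_tL^q_x$ are used in an essential way — the growth condition on $\a$ compensating exactly for the loss of the $L^\infty$ bound that is available in \cite{F08}.
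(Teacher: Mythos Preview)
Your approach is essentially the same as the paper's: apply the regularity theorem to obtain $u\in L^2_tH^1_{x,\mathrm{loc}}$, mollify, use the $L^2_tH^{-1}_{x,\mathrm{loc}}$ convergence of the commutators from Lemmas~\ref{lem:conv_comm2} and~\ref{lem:conv_comm4}, test against $\beta'(u^\delta)\varphi_R$, integrate by parts twice in the diffusion term so as to produce the $\partial_j a_{ij}\,\partial_i\varphi_R$ and $a_{ij}\,\partial_{ij}\varphi_R$ contributions handled by the growth hypotheses, and close via Gr\"onwall after sending $\delta\to 0$ and $R\to\infty$. The only cosmetic difference is that the paper works formally with $\beta(z)=|z|^q$ and defers the truncation details to the argument of Theorem~\ref{thm:esistenza unicita fp div}, whereas you spell out the $\beta_M$-regularisation and the ensuing super-level corrections explicitly.
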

\begin{proof}
The proof is similar to the one of Theorem \ref{thm:esistenza unicita fp div} so we just sketch it. Let $u\in L^\infty((0,T);L^q(\R^d))$ be a distributional solution of \eqref{eq:fp} whose existence is proved in Proposition \ref{prop:existence_weak_sol_2}. Let $\rho^\delta$ be a standard mollifier, we have that the function $u^\delta:=u*\rho^\delta$ satisfies the equation
\begin{equation}\label{eq:regolarizzata-teo-loc}
\begin{cases}
\partial_t u^\delta +\dive(\tilde \b u^\delta)-\frac{1}{2}\sum_{ij}\partial_{i}(a_{ij}\partial_j u^\delta)=r^\delta+s^\delta,\\
u^\delta(0,\cdot)=u_0*\rho_\delta.
\end{cases}
\end{equation}
Given our assumptions on $p$ and $q$,  Theorem \ref{thm:regolarita_FP} implies that the solution $u$ belongs to the space $L^2((0,T);H^{1}_\mathrm{loc}(\R^d))$. Thus, as a consequence of Lemma \ref{lem:conv_comm2} and Lemma \ref{lem:conv_comm4} we know that $r^\delta,s^\delta\to 0$ in $L^2((0,T);H^{-1}_\mathrm{loc}(\R^d))$. 
%\gennaro{A differenza di Theorem \ref{thm:esistenza unicita fp div} qui possiamo usare Assumption A e la regolarit\`a di $\a$ per dire che $s^\delta\to 0$ in $L^1$. La stessa cosa non vale per $r^\delta$ per problema con $\dive \b$. Questo \`e l'unico punto dove usiamo $u\in H^1_\mathrm{loc}$.} 
Consider a smooth function $\varphi$ such that 
$$
\varphi\geq 0,\hspace{0.3cm}\mathrm{supp} \ \varphi\subset B_2,\hspace{0.3cm}\varphi= 1 \mbox{ in }B_1,
$$
and define $\varphi_R(x)=\varphi\left(x/R\right)$. 
Let $\beta:\R\to \R$ be the function defined as 
$$
\beta(z)=|z|^q.
$$
The following argument can be made rigorous by considering a regularized version of the function $\beta$ as we did in the proof of Theorem \ref{thm:esistenza unicita fp div}. We skip this technical detail for simplicity of exposition.
We multiply the equation \eqref{eq:regolarizzata-teo-loc} by $\beta'(u^\delta)\varphi_R$ and with computations similar to those of the proof of the Theorem \ref{thm:esistenza unicita fp div} we get that
\begin{align*}
\int_0^t\int_{\R^d}\dive(\tilde\b u^\delta)\beta'(u^\delta)\varphi_R\de x\de s=&  -\int_0^t \int_{\R^d} u^\delta\tilde\b\cdot\nabla    u^\delta \beta''(u^\delta)\varphi_R\de x\de s \\ 
& - \int_0^t\int_{\R^d} u^\delta\beta'(u^\delta )\tilde\b\cdot\nabla \varphi_R\de x\de s \\
=& -(q-1)\int_0^t\int_{\R^d}\tilde\b\cdot\nabla\beta(u^\delta)\varphi_R\de x\de s\\ 
& - \int_0^t\int_{\R^d} u^\delta\beta'(u^\delta )\tilde\b\cdot\nabla \varphi_R\de x\de s \\
=&(q-1)\int_0^t\int_{\R^d}\dive\tilde\b\beta(u^\delta)\varphi_R\de x\de s\\ 
& +(q-1)\int_0^t\int_{\R^d}\beta(u^\delta)\tilde \b\cdot\nabla\varphi_R\de x \de s\\
&-\int_0^t\int_{\R^d} u^\delta\beta'(u^\delta )\tilde\b\cdot\nabla \varphi_R\de x\de s .
\end{align*}
On the other hand, for the term involving the diffusion we use the assumption on the divergence obtaining that
\begin{align*}
& \int_0^t\int_{\R^d}\partial_{i}(a_{ij}\partial_j u^\delta)\beta'(u^\delta)\varphi_R\de x\de s \\ 
= & -\int_0^t\int_{\R^d}a_{ij}\partial_j u^\delta\partial_iu^\delta\beta''(u^\delta)\varphi_R\de x \de s -\int_0^t\int_{\R^d}a_{ij}\partial_j u^\delta\beta'(u^\delta)\partial_i\varphi_R\de x \de s\\
=& -\int_0^t\int_{\R^d}a_{ij}\partial_j u^\delta\partial_iu^\delta\beta''(u^\delta)\varphi_R\de x \de s -\int_0^t\int_{\R^d}a_{ij}\partial_j \beta(u^\delta)\partial_i\varphi_R\de x \de s\\
=&-\int_0^t\int_{\R^d}a_{ij}\partial_j u^\delta\partial_iu^\delta\beta''(u^\delta)\varphi_R\de x \de s+\int_0^t\int_{\R^d}\partial_ja_{ij} \beta(u^\delta)\partial_i\varphi_R\de x \de s\\
&+ \int_0^t\int_{\R^d}a_{ij} \beta(u^\delta)\partial_j\partial_i\varphi_R\de x \de s.
\end{align*}
Then, we use the uniform ellipticity of $\a$ to obtain
\begin{equation}
0\leq \alpha\int_0^T\int_{\R^d}|\nabla u^\delta|^2\beta''(u^\delta)\varphi_R\de x \de s\leq \int_0^T\int_{\R^d}a_{ij}\partial_i u^\delta\partial_j u^\delta\beta''(u^\delta)\varphi_R\de x \de s.
\end{equation}
We finally have the following inequality
\begin{align*}
\int_{\R^d}\beta(u^\delta)\varphi_R\de x&%\alpha\int_0^t\int_{\R^d}|\nabla u^\delta|^2\beta''(u^\delta)\varphi_R\de x \de s
\leq -(q-1)\int_0^t \int_{\R^d} \dive \tilde\b \beta(u^\delta) \varphi_R\de x\de s \\ 
&+C\int_0^t\int_{\R^d}|\tilde\b| \beta(u^\delta )|\nabla \varphi_R|\de x\de s+\int_0^t\int_{\R^d} a_{ij}\beta(u^\delta)\partial_i\partial_j\varphi_R\de x \de s\\
&+\int_0^t\int_{\R^d} \partial_ja_{ij}\beta(u^\delta)\partial_i\varphi_R\de x\de s+\int_0^t\int_{\R^d}(r^\delta+s^\delta)\beta'(u^\delta)\varphi_R\de x\de s.
\end{align*}
Since $\beta'(u^\delta)\varphi_R\in H^1_0(B_{2R})$ we can let $\delta\to 0$ to obtain that 
\begin{align*}
\int_{\R^d}\beta(u)\varphi_R\de x&\leq -(q-1)\int_0^t\int_{\R^d}\dive(\tilde\b) \beta(u)\varphi_R\de x\de s+C\int_0^t\int_{\R^d}\beta(u)|\tilde \b ||\nabla\varphi_R| \de x \de s\\
&+\int_0^t\int_{\R^d} \beta(u)a_{ij}\partial_i\partial_j\varphi_R\de x \de s+\int_0^t\int_{\R^d} \beta(u)\partial_ja_{ij}\partial_i\varphi_R\de x \de s\\
&=I+...+IV.
\end{align*}
Then, we can estimate $I$ and $II$ as in the proof of Theorem \ref{thm:esistenza unicita fp div}, while we use the growth assumptions on $\a$ to estimate $III$ and $IV$ as follows
\begin{align*}
I&\lesssim \int_0^t\|(\dive\tilde\b(s,\cdot))^-\|_{\infty}\int_{\R^d}\beta(u)\varphi_R\de x \de s,\\
II&\lesssim \|u_0\|_\infty^q\int_0^T\int_{|x|>R}|\tilde\b_1(s,x)|\de x\de s+\int_0^T\|\tilde\b_2(s,\cdot)\|_\infty\int_{|x|>R}|u(s,x)|^q\de x \de s ,\\
III&\lesssim \|u_0\|_\infty^q\int_0^T\int_{|x|>R}|\a_1(s,x)|\de x\de s+\int_0^T\|\a_2(s,\cdot)\|_\infty\int_{|x|>R}|u(s,x)|^q\de x \de s,\\
IV&\lesssim \|u_0\|_\infty^q\int_0^T\int_{|x|>R}|\partial_j a^1_{ij}(s,x)|\de x\de s+\int_0^T\|\partial_j a_{ij}^2(s,\cdot)\|_\infty\int_{|x|>R}|u(s,x)|^q\de x \de s,
\end{align*}
where it is worth to point out that in the estimate for $III$  we used that $\partial_i\partial_j\varphi_R\leq\frac{C}{R^2}$ together with the growth assumption on $\a$. Moreover, all the implicit constants in the estimate depends on the $C^2$ norm of $\varphi$ only. Thus, by defining $F(R):=II+III+IV$, which vanishes as $R\to\infty$, we get the following
\begin{equation*}
\int_{\R^d}\beta(u)\varphi_R\de x\leq   \int_0^t\|(\dive\tilde\b(s,\cdot))^-\|_\infty\int_{\R^d}\beta(u)\varphi_R\de x \de s+F(R),
\end{equation*}
and finally, we apply Gronwall's inequality to obtain that
\begin{equation*}
\int_{\R^d}\beta(u)\varphi_R\de x  \leq F(R)\exp\left(\|(\dive\tilde\b)^-\|_{L^1L^\infty}\right),
\end{equation*}
and by letting $R\to\infty$ we obtain that $|u|=0$ for a.e. $(t,x)\in(0,T)\times\R^d$.
\end{proof}
%\gennaro{Se usassimo la $\beta$ ``vera" ci sono diversi punti che non funzionano. Per esempio, se $\nabla u\in L^2_\mathrm{loc}$ allora non \`e pi\`u vero che l'integrale su $\{|u|>M\}$ \`e piccolo, anzi potrebbe essere infinito. Per questo motivo ho preso $u_0\in L^q\cap L^\infty$. Nota che siamo fuori dallo spazio di soluzioni paraboliche.}

\begin{comment}
\begin{rem}
The proof of Theorem \ref{thm:uniqueness-local} is essentially the proof of the uniqueness of renormalized solutions for the linear transport equation in \cite{DPL}. We use the assumptions on the integrability of $\b,\a$ and $u$ to get the regularity $u\in L^2((0,T);H^1_\mathrm{loc}(\R^d))$ which in turns implies the stronger convergence of the commutators $r^\delta,s^\delta$ in $L^1((0,T);L^1_\mathrm{loc}(\R^d))$ as in Lemma \ref{lem:conv_comm}.
\end{rem}
\end{comment}
\begin{comment}
\begin{rem}
Remark \ref{rem:smooth beta} applies to Theorem \ref{thm:uniqueness-local} too. Moreover, since $q$ may be different from $2$, one has also to smoothen the function $\beta$ in a neighbourhood of $0$. In general, one should use a $\beta_\e$ which converges to $\beta$ uniformly and such that
\begin{itemize}
    \item $\beta_\e\in C^2\cap L^\infty(\R)$;
    \item $\beta_\e(z)=C$ on $|z|>M$, for some constants $C, M>0$ independent on $\e$;
    \item $\beta_\e(z)=0\iff z=0$;
    \item $|\beta'_\e(z)|\leq C$ for some constant $C>0$ independent on $\e$;
    \item $\beta_\e''(z)\geq 0$ for $|z|<M-\e$;
    \item $\beta_\e''(z)\leq C$ for some constant $C>0$ independent on $\e$.
\end{itemize}
\end{rem}
\end{comment}
If we assume slightly stronger growth conditions, there is a regime where the solution is parabolic instead of just locally parabolic. The result is the following. 
%\paolo{controllare anche questo} \gennaro{Ci vuole $\a\in L^\infty L^\infty$ per stare nelle ipotesi di Teorema \ref{thm:uniqueness_weak_parabolic_FP}. Nella sezione 3.3 diciamo che questa ipotesi pu\`o essere sostituita da $\a$ Sobolev con ipotesi di crescita, ma credo non sia vero. Forse per\`o si aggiusta la dimostrazione di unicit\`a in qualche modo...}\\
%\gennaro{ipotesi $L^2$ sul dato iniziale per uniqueness di soluzioni paraboliche. Altrimenti sappiamo solo dire che $\nabla u\in L^2_{t,x}$ e non abbiamo un teorema di unicit\`a in questo caso.}

\begin{thm}\label{thm:growth_uniqueness}
Let $u\in L^\infty((0,T);L^2\cap L^q(\R^d))$ be a distributional solution of \eqref{eq:fp}, and let $\b\in L^2((0,T);L^p(\R^d;\R^d))$ and $\a\in L^2((0,T);W^{1,p}(\R^d;\R^{d\times d}))$ with $1/p+1/q= 1/2$ satisfy \ref{assu:dive_a_and_b} and \ref{assu:elliptic}, and $(\dive\tilde\b)^-\in L^1((0,T);L^\infty(\R^d))$. Moreover, assume that $2<q\leq \frac{2d}{d-2}$ together with the following growth conditions
$$
\frac{\b(t,x)}{1+|x|},\frac{\partial_j a_{ij}(t,x)}{1+|x|}\in L^1((0,T);L^{\frac{q}{q-2}}(\R^d;\R^d))+L^1((0,T);L^\infty(\R^d;\R^d)),
$$
$$
\frac{\a(t,x)}{1+|x|^2}\in L^1((0,T);L^{\frac{q}{q-2}}(\R^d;\R^{d\times d}))+L^1((0,T);L^\infty(\R^d;\R^{d\times d})).
$$

Then $u\in L^2((0,T);H^1(\R^d))$.
\end{thm}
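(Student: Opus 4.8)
The plan is to revisit the localised energy estimate used in the proof of Theorem~\ref{thm:regolarita_FP} and to show that, under the stronger growth conditions and the restriction $q\le\frac{2d}{d-2}$, the bound it produces on $\int_0^T\!\int_{B_R}(|u|^2+|\nabla u|^2)$ is \emph{uniform} in the truncation radius $R$; letting $R\to\infty$ by monotone convergence then gives $u\in L^2((0,T);H^1(\R^d))$. Since $\sfrac1p+\sfrac1q=\sfrac12$, Theorem~\ref{thm:regolarita_FP} already applies and yields $u\in L^2((0,T);H^1_{\mathrm{loc}}(\R^d))$, which will be used below. A second, crucial gain is that --- still because $\sfrac1p+\sfrac1q=\sfrac12$ and because $\b\in L^2_tL^p_x$, $\a\in L^2_tW^{1,p}_x$, $u\in L^\infty_tL^q_x$ are \emph{global} --- Remark~\ref{rem:global_comm_h-1} and the global analogue of Lemma~\ref{lem:conv_comm4} give that the commutators $r^\delta,s^\delta$ converge to $0$ in $L^2((0,T);H^{-1}(\R^d))$, so $\|r^\delta\|_{L^2H^{-1}}+\|s^\delta\|_{L^2H^{-1}}$ is bounded uniformly in $\delta$ and, crucially, independently of $R$.

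Concretely, I would mollify $u^\delta=u*\rho^\delta$ (which solves \eqref{eq:divFP-e+comm} with $\tilde\b$), test against $u^\delta\varphi_R$ with $\varphi_R(x)=\varphi(x/R)$, and use \ref{assu:elliptic}, obtaining an inequality of the form
\[
\int|u^\delta(t)|^2\varphi_R+\alpha\!\int_0^t\!\!\int|\nabla u^\delta|^2\varphi_R\le \int|u_0^\delta|^2\varphi_R+\int_0^t\!\|(\dive\tilde\b)^-(s)\|_\infty\!\int|u^\delta(s)|^2\varphi_R\,\de s+\mathcal B_R^\delta+\mathcal C_R^\delta,
\]
where $\mathcal B_R^\delta$ collects the flux terms carrying $\nabla\varphi_R$ or $\nabla^2\varphi_R$ and $\mathcal C_R^\delta=2\int_0^t\!\int(r^\delta+s^\delta)u^\delta\varphi_R$. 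The divergence term is absorbed by Gronwall's lemma (producing a harmless $\exp(\|(\dive\tilde\b)^-\|_{L^1L^\infty})$ factor). For $\mathcal C_R^\delta$, Cauchy--Schwarz and Young give $\mathcal C_R^\delta\le C_\alpha(\|r^\delta\|_{L^2H^{-1}}^2+\|s^\delta\|_{L^2H^{-1}}^2)+\tfrac\alpha4\|u^\delta\varphi_R\|_{L^2H^1}^2$; the first summand is uniform in $R$ and vanishes as $\delta\to0$, while $\|u^\delta\varphi_R\|_{L^2H^1}^2$ is split into $\int_0^t\!\int|\nabla u^\delta|^2\varphi_R$, absorbed on the left, plus $\int_0^t\!\int_{R<|x|<2R}|u^\delta|^2|\nabla\varphi_R|^2\lesssim R^{d\frac{q-2}{q}-2}\|u\|_{L^\infty L^q}^2$, which is bounded precisely because $q\le\frac{2d}{d-2}$ (so $d\frac{q-2}{q}\le2$).

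The terms in $\mathcal B_R^\delta$ are treated by noticing that on $\{R<|x|<2R\}$ one has $|\nabla\varphi_R|\lesssim(1+|x|)^{-1}$ and $|\nabla^2\varphi_R|\lesssim(1+|x|)^{-2}$. The contribution $\int|u^\delta|^2\b\cdot\nabla\varphi_R$ is estimated by plain H\"older with exponents $(\sfrac q2,p,s)$, $\sfrac1s=\sfrac12-\sfrac1q$, giving $R^{\frac dp-1}\|u\|_{L^\infty L^q}^2\|\b\|_{L^1L^p}$ with $\tfrac dp-1=d(\tfrac12-\tfrac1q)-1\le0$ again because $q\le\frac{2d}{d-2}$; the flux terms in $\partial_ja_{ij}$ cancel between the transport and diffusion parts; and the terms in $\a$ (which cannot be handled by mere integrability, uniform ellipticity being incompatible with $\a\in L^p(\R^d)$) are controlled via the growth hypothesis: on the annulus $|\a||\nabla^2\varphi_R|\lesssim\frac{|\a|}{1+|x|^2}=\a_1+\a_2$, so by H\"older with $|u^\delta|^2\in L^{q/2}$ the $L^1_tL^{q/(q-2)}_x$-part is bounded by $\|u\|_{L^\infty L^q}^2\int_0^T\|\a_1(s)\|_{L^{q/(q-2)}(|x|>R)}\,\de s\to0$ as $R\to\infty$, and the growth hypotheses on $\b$ and $\partial_ja_{ij}$ produce, in the same way, vanishing-in-$R$ tails for the remaining flux terms.

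The main obstacle is the remaining two blocks: the initial-datum term $\int|u_0^\delta|^2\varphi_R\le\|u_0\|_{L^q}^2\|\varphi_R\|_{L^{q/(q-2)}}\lesssim R^{d\frac{q-2}{q}}$, and the $L^1_tL^\infty_x$-parts of the growth conditions, which contribute $\lesssim\sup_t\int_{R<|x|<2R}|u(t)|^2\lesssim R^{d\frac{q-2}{q}}$ as well; both exponents are non-negative when $q>2$, so a crude bound is not enough. This is exactly where $q\le\frac{2d}{d-2}$ must be used in an essential way: one should feed back the already-established local parabolic regularity $u\in L^2_tH^1_{x,\mathrm{loc}}$ together with the Sobolev embedding $H^1\hookrightarrow L^{2^*}$ on dyadic annuli in order to improve these two contributions down to quantities bounded uniformly in $R$ (one expects a finite iteration over the scales $R=2^k$, exploiting that the tails of $u$ in $L^q$ and of the coefficients in $L^{q/(q-2)}$ are summable). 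Once the whole right-hand side is bounded independently of $R$, passing $\delta\to0$ and then $R\to\infty$ yields $\nabla u\in L^2((0,T);L^2(\R^d))$ and $u\in L^2((0,T);L^2(\R^d))$, i.e. $u\in L^2((0,T);H^1(\R^d))$, so that $u$ is a genuinely parabolic (not merely locally parabolic) solution.
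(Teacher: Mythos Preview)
Your overall strategy coincides with the paper's: rerun the localised energy estimate of Theorem~\ref{thm:regolarita_FP}, but replace the crude H\"older bounds on the flux terms carrying $\nabla\varphi_R$ or $\nabla^2\varphi_R$ by estimates exploiting the growth decompositions, and use $q\le\tfrac{2d}{d-2}$ to keep the remainder $\int|u^\delta|^2|\nabla\varphi_R|^2\lesssim R^{d\frac{q-2}{q}-2}$ bounded. The commutators are handled exactly as you say, via the global $L^2_tH^{-1}_x$ convergence coming from Remark~\ref{rem:global_comm_h-1} and the global version of Lemma~\ref{lem:conv_comm4}. Your side observation that the $\partial_ja_{ij}$-flux contributions from $\tilde\b\cdot\nabla\varphi_R$ and from the diffusion cancel is also correct; the paper keeps them separate but both are covered by the growth hypotheses.

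The discrepancy is in your final paragraph. The paper does \emph{not} resolve the ``obstacle'' you identify by any iteration. In its proof the $L^1_tL^\infty_x$-part of the growth decomposition is bounded directly using a global $L^2_x$-control on $u$: the displayed estimate reads $C\|u_0\|_{L^2}^2\,\|\b_2/(1+|x|)\|_{L^\infty}$, with $\|u_0\|_{L^2}$ appearing explicitly even though it is not among the stated hypotheses of Theorem~\ref{thm:growth_uniqueness}. The initial-datum and divergence contributions are absorbed in the same way into the constant ``$1$'' of the final bound $\|\nabla u^\delta\|_{L^2L^2(B_R)}^2\le C(1+F(R)+R^{d\frac{q-2}{q}-2})$. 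In other words, the paper's argument tacitly uses $u\in L^\infty_tL^2_x$; this is precisely the extra hypothesis that the Corollary immediately following makes explicit. Your proposed dyadic scheme with Sobolev embedding on annuli is therefore not needed to match the paper, and as sketched it does not close anyway: the embedding on $\{2^k<|x|<2^{k+1}\}$ controls $\|u\|_{L^{2^*}}$ by $\|u\|_{H^1}$, but you have no a priori decay or summability of the $H^1$-norms over the annuli, and the $L^q$-tail of $u$ alone cannot bound $\int_{|x|>R}|u|^2$ when $q>2$. The fix is simply to invoke the $L^2_x$ bound on $u$, as the paper does.
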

\begin{proof}
We can proceed as in Theorem \ref{thm:regolarita_FP}: we consider the equation for $u^\delta$ and we multiply it by $2 u^\delta \varphi_R$ where $\varphi_R$ is defined as in Theorem \ref{thm:uniqueness-local} and we obtain the estimate 
\begin{align*}
\int_{\R^d}|u^\delta(t,x)|^2\varphi_R\de x&+\alpha\int_0^t\int_{\R^d}|\nabla u^\delta|^2\varphi_R\de x\de s
\leq \int_{\R^d}|u^\delta_0|^2\varphi_R\de x+\int_0^t\int_{\R^d}|u^\delta|^2\tilde\b\cdot\nabla\varphi_R \de x\de s\\
&-\int_0^t\int_{\R^d}|u^\delta|^2\dive \tilde \b\,\varphi_R \de x\de s+\sum_{ij}\int_0^t\int_{\R^d}\partial_ja_{ij}\frac{|u^\delta|^2}{2}\partial_i\varphi_R\de x\de s\\
&+\sum_{ij}\int_0^t\int_{\R^d}a_{ij}\frac{|u^\delta|^2}{2}\partial_j\partial_i\varphi_R\de x\de s+2\int_0^t\int_{\R^d}(r^\delta+s^\delta)u^\delta\varphi_R\de x\de s.
\end{align*}
The difference with respect to the proof of Theorem \ref{thm:regolarita_FP} is in the estimates of the second, fourth and fifth term on the right hand side above. For example, for the second term we use the growth assumption on $\b$ to get that
\begin{align*}
\int_0^T\int_{\R^d}|u^\delta|^2 \b\cdot\nabla\varphi_R \de x \de t&\leq C\int_0^T\int_{R<|x|<2R}|u^\delta|^2 \frac{|\b|}{1+|x|}\de x \de t\\
&\leq C\|u_0\|_{L^2}^2\left\|\frac{|\b_2|}{1+|x|}\right\|_{L^{\infty}}\\
&+ C\|u_0\|^2_{L^q}\underbrace{\int_0^T\left(\int_{|x|>R}\left(\frac{|\b_1|}{1+|x|}\right)^{\frac{q}{q-2}}\de x \right)^{\frac{q-2}{q}}\de t}_{(*):=f(R)}
\end{align*}
where $f(R)\to 0$ as $R\to\infty$. Similarly, we can estimate the other two terms using the growth assumptions on $\a$. Then, by using that the commutators are uniformly bounded in $\delta$, we proceed as in Theorem \ref{thm:regolarita_FP} and we obtain that
\begin{equation*}
\|\nabla u^\delta\|^2_{L^2L^2(B_R)}\leq C \left(1+F(R)+R^{d\frac{q-2}{q}-2}\right),
\end{equation*}
for some function $F(R)$ which vanishes as $R\to \infty$. Thus we have obtained that $\|\nabla u^\delta\|^2_{L^2L^2(B_R)}$ is uniformly bounded (in $\delta$ and $R$) and this concludes the proof.
\end{proof}

Combining Theorem \ref{thm:growth_uniqueness} with the uniqueness result for parabolic solutions Theorem \ref{thm:uniqueness_weak_parabolic_FP} we obtain the following: 

\begin{cor} 
Assume that the hypothesis of Theorem \ref{thm:growth_uniqueness} are fulfilled. In addition assume that $\a$ satisfies \ref{assu:bounded}.
%\begin{itemize}
%   \item $u\in L^\infty((0,T);L^2\cap L^q(\R^d))$;
%    \item $\a$ satisfies \ref{assu:bounded}.
%\end{itemize}
Then, any distributional solution $u\in L^\infty((0,T);L^2\cap L^q(\R^d))$ of \eqref{eq:fp} is parabolic and therefore unique.
\end{cor}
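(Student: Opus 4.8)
The plan is to promote a distributional solution to a parabolic one using the regularity statement of Theorem~\ref{thm:growth_uniqueness}, and then to invoke the parabolic uniqueness result Theorem~\ref{thm:uniqueness_weak_parabolic_FP}. First I would note that all the hypotheses of Theorem~\ref{thm:growth_uniqueness} are in force, so any distributional solution $u\in L^\infty((0,T);L^q(\R^d))$ of \eqref{eq:fp} satisfies $u\in L^2((0,T);H^1(\R^d))$; combined with the standing assumption $u\in L^\infty((0,T);L^2(\R^d))$, this yields $u\in L^\infty((0,T);L^2(\R^d))\cap L^2((0,T);H^1(\R^d))$. Since a distributional solution of \eqref{eq:fp} comes with an initial datum $u_0$, which lies in $L^q(\R^d)$ by the very definition of distributional solution and in $L^2(\R^d)$ by the standing assumption $u\in L^\infty_tL^2_x$ together with the weak continuity of $t\mapsto u(t,\cdot)$, we conclude that $u$ is a parabolic solution of \eqref{eq:fp} with $u_0\in L^2\cap L^q(\R^d)$.

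Next I would verify that the data satisfy the hypotheses of Theorem~\ref{thm:uniqueness_weak_parabolic_FP}. Its conditions (i) and (iv) are literally among the hypotheses of Theorem~\ref{thm:growth_uniqueness}, and condition (iii) requires \ref{assu:dive_a_and_b} and \ref{assu:elliptic}, also among those hypotheses, together with \ref{assu:bounded}, which is the additional assumption of the corollary. The only point needing an argument is the growth condition (ii) for $\tilde\b=\b-\tfrac12\sum_j\partial_j a_{ij}$, namely $\tilde\b/(1+|x|)\in L^1_tL^1_x+L^1_tL^\infty_x$. For this I would use the elementary inclusion $L^1((0,T);L^r(\R^d))\subseteq L^1((0,T);L^1(\R^d))+L^1((0,T);L^\infty(\R^d))$, valid for every $r\in[1,\infty]$: for a.e.\ fixed $t$ write $f(t,\cdot)=f(t,\cdot)\mathbbm{1}_{\{|f(t,\cdot)|\le\lambda(t)\}}+f(t,\cdot)\mathbbm{1}_{\{|f(t,\cdot)|>\lambda(t)\}}$ with $\lambda(t):=\|f(t,\cdot)\|_{L^r(\R^d)}$; the first summand has $\|\cdot\|_{L^\infty(\R^d)}\le\lambda(t)$, while Chebyshev's inequality gives $|\{|f(t,\cdot)|>\lambda(t)\}|\le1$, so the second summand has $\|\cdot\|_{L^1(\R^d)}\le\lambda(t)$, and both bounds are integrable in $t$ by hypothesis. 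Applying this to $\b/(1+|x|)$ and to $\sum_j\partial_j a_{ij}/(1+|x|)$, each of which lies in $L^1_tL^{q/(q-2)}_x+L^1_tL^\infty_x$ by the growth hypotheses of Theorem~\ref{thm:growth_uniqueness}, and adding the resulting decompositions, we obtain (ii).

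Once all the hypotheses of Theorem~\ref{thm:uniqueness_weak_parabolic_FP} are checked, that theorem provides a unique parabolic solution of \eqref{eq:fp} with initial datum $u_0$; since the first step shows that every distributional solution $u\in L^\infty((0,T);L^2\cap L^q(\R^d))$ of \eqref{eq:fp} with that datum is parabolic, such a solution is unique, which is the claim. I do not expect a substantial obstacle here: the argument merely reconciles the hypotheses of two already established theorems, and the only mildly delicate steps are the reduction of the $L^{q/(q-2)}$ growth condition to an $L^1+L^\infty$ growth condition (via the rearrangement/Chebyshev splitting above) and the routine verification that a parabolic solution attains its prescribed initial datum in $L^2$.
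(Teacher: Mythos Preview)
Your proposal is correct and follows precisely the approach the paper has in mind: the paper gives no explicit proof, only the sentence ``Combining Theorem~\ref{thm:growth_uniqueness} with the uniqueness result for parabolic solutions Theorem~\ref{thm:uniqueness_weak_parabolic_FP} we obtain the following''. You carry out exactly this combination and, in addition, carefully check the one hypothesis of Theorem~\ref{thm:uniqueness_weak_parabolic_FP} that is not literally among those of Theorem~\ref{thm:growth_uniqueness}, namely the growth condition (ii) on $\tilde\b$; your Chebyshev splitting showing $L^1_tL^{q/(q-2)}_x\subset L^1_tL^1_x+L^1_tL^\infty_x$ is the right way to bridge the two formulations (and in fact for the $\sum_j\partial_j a_{ij}$ part one could alternatively just invoke \ref{assu:dive_a_and_b} directly).
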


\subsection*{Acknowledgements} 
%P. Bonicatto has received funding from the European Research Council (ERC) under the European Union's Horizon 2020 research and innovation programme, grant agreement No 757254 (SINGULARITY). 
During the preparation of this manuscript, G. Ciampa has been supported by the ERC Starting Grant 101039762 HamDyWWa. He is currently supported by INdAM-GNAMPA, and by the projects PRIN2020 ``Nonlinear evolution PDEs, fluid dynamics and transport equations: theoretical foundations and applications” and PRIN2022 ``Classical equations of compressible fluids mechanics: existence and properties of non-classical solutions''. G. Crippa is supported by SNF Project 212573 FLUTURA – Fluids, Turbulence, Advection. Views and opinions expressed are however those of the authors only and do not necessarily reflect those of the European Union or the European Research Council. Neither the European Union nor the granting authority can be held responsible for them.

\end{document}